\numberwithin{equation}{section}
\theoremstyle{plain}
\newtheorem{theorem}{Theorem}[section]
\newtheorem{lemma}[theorem]{Lemma}
\newtheorem{proposition}[theorem]{Proposition}
\newtheorem{corollary}[theorem]{Corollary}
\theoremstyle{definition}
\newtheorem{remark}[theorem]{Remark}
\def\beqn{\begin{equation}}
\def\beqn*{$$}
\def\eeqn{\end{equation}}
\def\ms{\mathsf}
\def\P{\mathbb{P}}
\def\E{\mathbb{E}}
\def\ga{\gamma}
\newcommand{\reals}{{\mathbb R}}
\newcommand{\R}{\reals}
\newcommand{\vep}{\varepsilon}
\newcommand{\one}{{\mathbbm 1}}
\newcommand{\remove}[1]{}
\newcommand{\muti}{\mu^{\leftarrow}}
\newcommand{\bbT}{{\mathbb{T}}}
\newcommand{\mP}{\mathcal P}
\newcommand{\inDt}{\mathcal D_\ms{in}}
\newcommand{\inCc}{\mathcal C_\ms{in}}
\begin{document}

\bibliographystyle{abbrv}

\renewcommand{\baselinestretch}{1.05}

\title[Limit theorems in the ADRCM]
{Limit theorems under heavy-tailed scenario in the age dependent random connection models}

\author{Christian Hirsch}
\address{Department of Mathematics\\
Aarhus University \\
Ny Munkegade, 118, 8000, Aarhus C, Denmark}
\email{hirsch@math.au.dk}
\author{Takashi Owada}
\address{Department of Statistics\\
Purdue University \\
IN, 47907, USA}
\email{owada@purdue.edu}

\thanks{This research was partially carried out during Owada's visit to Aarhus University funded by AUFF visit grant AUFF-E-2023-6-38. Owada's research was partially supported by the AFOSR grant
  FA9550-22-1-0238 at Purdue University.}

\subjclass[2020]{Primary 60D05, 60G70.  Secondary 60F05, 60G55. }
\keywords{stable limit theorem, extreme value theory, sub-tree count, clique count, scale-free network, random connection model. \vspace{.5ex}}

\begin{abstract}
This paper considers limit theorems associated with subgraph counts in the age-dependent random connection model. 
First, we identify regimes where the count of sub-trees converges weakly to a stable random variable under suitable assumptions on the shape of trees. The proof relies on an  intermediate result on weak convergence of associated point processes towards a Poisson point process. Additionally, we prove the same type of results for the clique counts. Here, a crucial ingredient includes the expectation asymptotics for clique counts, which itself is a result of independent interest.
\end{abstract}

\maketitle
\section{Introduction}
\label{sec:intro}

A central goal in network science is to develop flexible and parsimoniously parameterized random graph models reflecting the key features of a variety of real-world networks. Most prominently, this includes the appearance of hubs exhibiting  atypically large degrees and allowing  short connecting paths between network nodes chosen at random.

One of the most prominent network models in this context  is the preferential attachment model, introduced by Barab\'asi and Albert in \cite{ba}; see also \cite{hofstad:2017} for a comprehensive exposition on this topic. In this model, a network is constructed by adding nodes one after another, and each new node is connected to existing nodes with a probability proportional to their degrees. This model has been shown to capture the scale-free property of many real-world networks, where the degree distribution follows a power law. However, a critical drawback of the standard preferential attachment model is its vanishing clustering coefficient. Specifically, two nodes sharing a common neighbor fail to be connected with a substantially higher probability than  any two arbitrary nodes. This indicates a stark contrast to most  real-world datasets. 

In order to correct such deficit of the preferential attachment model, several approaches have been proposed. A particularly elegant solution is to impose the clustering property by embedding the nodes in Euclidean space. This has led to the \emph{age-dependent random connection model} (ADRCM) and its variants \cite{glm2,komjathy2,glm,komjathy}. The standard random connection model connects any two Poisson points with probability depending only on distance,  leading, therefore, to a light-tailed degree distribution. In contrast, the idea behind the ADRCM is to enforce the scale-free property by endowing the nodes with i.i.d.~weights that influence the connection probability.

Over the past decade, a highly refined machinery was developed for proving central limit theorems (CLTs) associated to Poisson inputs \cite{mal_shot,mehler,yukCLT,rs13}. However, in the setting of complex networks, this theory is often not applicable since the presence of hubs leads to heavy-tailed behavior. Indeed, it is only reasonable to expect a CLT under the assumption of finite variance. However, in many real-world networks, the degree distribution has an infinite second moment. In such situations,  a stable limit theorem naturally arises instead of a CLT.
 
While there exists a large collection of literature on CLTs for Poisson-based network models, there are only a few results for stable limit theorems. For instance, \cite{litvak} is one of such exceptions for a scale-free network model, but their model does not exhibit any geometric components. Moreover, \cite{hj23} establishes a stable limit theorem for the edge count in a class of random connection models. However, this is only of limited use in practice since the number of edges can be fitted with a separate parameter. Hence, there is a natural need for deriving stable limit theorems for more complex test statistics. Finally, it is worth mentioning that \cite{dst} discusses stable limit theorems for a very specific functional given by the sum of inverse powers of pairwise distances in a Poisson point cloud.

Our main results include  point process convergence and stable limit theorems for two more refined test statistics, namely sub-tree counts in Section \ref{sec:tree} and clique counts in Section \ref{sec:clique}. We note that scalings of specific triangle counts in the ADRCM was also considered in \cite{pvh}, but there were no stable limit theorems.

To summarize, the main contributions of this paper are as follows:
\begin{enumerate}
	\item We establish the expectation and variance asymptotics for sub-tree and clique counts with a fixed lowest mark. We note that the missing expectation asymptotics was a major obstacle for the study on clique counts in the closely related hyperbolic model (see, e.g., \cite{subtree}). 
	\item Under suitable assumptions on the  shape of trees, we establish point process convergence and stable limit theorems for the sub-tree counts.
	\item A series of analogous results will be established for the clique counts.
\end{enumerate}
The main idea of the proof in the stable regime is that sub-tree counts are dominated by  a small number of trees with very low marks. The key observation is that the number of sub-trees with a fixed low mark concentrates sharply around its mean. Hence, we can approximate the sub-tree count by a sum of i.i.d.~random variables dependent solely on their marks, allowing us to invoke a stable limit theorem within the framework of extreme value theory, as in \cite{embrechts:kluppelberg:mikosch:1997} and \cite{resnick:2007}.

For the clique counts, the general idea is similar. However, it is substantially more involved to determine the expected number of cliques with a given lowest mark. A similar problem arises in the hyperbolic setting \cite{subtree}, but in the present paper, we shall use a refined analysis  based on the idea that if one of the nodes is connected to a vertex of the lowest mark, then the other vertices are most likely to connect to this vertex as well.

\section{Limit theorems for sub-tree counts}  
\label{sec:tree}

We begin by defining a notion of \emph{graph homomorphisms} to define sub-tree counts of our interest. Suppose $H$ is a directed graph without loops and multiple edges, defined on vertices $[k]:=\{ 1,2,\dots,k \}$ with $E(H)$ being its edge set. Given another directed graph $G=\big(V(G),E(G)\big)$ without loops and multiple edges, let $f:[k]\to V(G)$ denote an \emph{injective} graph homomorphism; that is, $f$ is an injective map and if $(i,j)\in E(H)$ with $i\to j$, then $\big( f(i), f(j) \big)\in E(G)$ with $f(i)\to f(j)$. We then denote by $\mathcal C(H,G)$ the number of injective graph homomorphisms from $H$ to $G$. In other words, this represents the number of copies of $H$ in G, given by 
$$
\mathcal C(H,G) := \sum_{(v_1,\dots,v_k)\in (V(G))_{\neq}^k} \prod_{(i,j)\in E(H), \, i\to j} \one \big\{ (v_i,v_j)\in E(G), \, v_i \to v_j \big\}, 
$$
where $\one \big\{ \cdot \big\}$ is an indicator function and 
$$
\big( V(G) \big)_{\neq}^k := \big\{ (v_1,\dots,v_k)\in V(G)^k: v_i \neq v_j \text{ for } i \neq j \big\}
$$
is a collection of $k$-tuples of distinct elements in $V(G)$. 

Let $\mP$ be a homogeneous Poisson point process on $\bbT := \R\times [0,1]$. The ADRCM, denoted  $\ms{AD}(\beta,\gamma)$, represents a directed graph whose vertex set is $\mP$. 
For the edge set, we set that for $(x,u), (y,v)\in \mP$ with $u\le v$, an edge  is present from $(y,v)$ to $(x,u)$ (we write it as $(y,v)\to (x,u)$) if and only if 
\begin{equation}  \label{e:edge:ADRCM}
|x-y| \le \beta u^{-\gamma} v^{\gamma-1}, 
\end{equation}
where $\beta>0$ and $\gamma\in (0,1)$ are parameters determining the density of edges. For each node $(x,u)$ in $\ms{AD}(\beta,\gamma)$,  $x$ is often called the spatial coordinate and  $u$ is said to be  the time coordinate. 
If $\mP$ is restricted to $\bbT_n := [0,n]\times [0,1]$, $n\ge1$, we denote by $\ms{AD}_n(\beta,\gamma)$ the same ADRCM, but with its vertex set restricted to $\mP_n := \mP \cap \bbT_n$. Note that as in \cite{glm2}, we restrict our attention to the case $\gamma < 1$. Otherwise, every node in $\ms{AD}(\beta,\gamma)$ would have infinite degree with probability 1, which is however a degenerate situation.  Indeed, such degenerate cases were also excluded in \cite{sfPerc}.

Let $\ms{T}$ denote a fixed directed tree on $a+1$ nodes with a  root $\ms{r}$. We assume that all edges emanating from the leaves are directed towards $\ms{r}$. As a consequence,  the out-degree of  $\ms{r}$ becomes zero. Moreover, $\ms{r}$ represents the "lowest mark" vertex, defined as the vertex with the smallest time coordinate when $\ms{T}$ is embedded in $\bbT$; see Figure \ref{fig:tree}.
\begin{figure}
\includegraphics[scale=0.35]{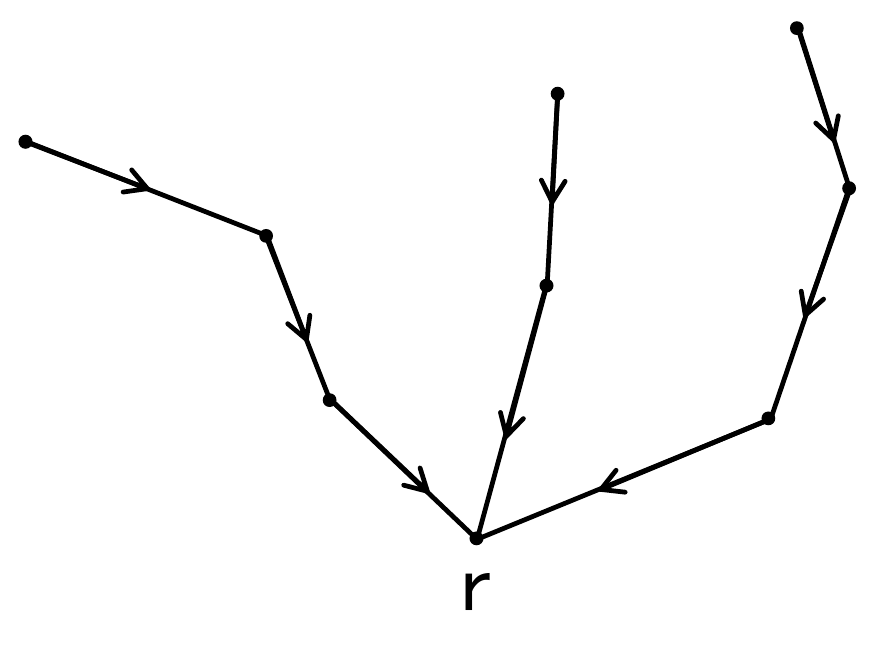}
\caption{\label{fig:tree} \footnotesize{A tree $\ms{T}$ of $9$ nodes and $3$ leaves with $a=8$, where $\ms{r}$ represents a root. }}
\end{figure}
Now, for $(x,u) \in \bbT$, define $\inDt(x,u)$ as the number of injective graph homomorphisms from $\ms{T}$ to $\ms{AD}(\beta,\gamma)$ such that $\ms{r}$ is mapped to $(x,u)$. If $x=0$, we denote this by $\inDt(u) = \inDt(0,u)$. By construction, all the nodes of a tree counted by $\inDt(x,u)$, except for $(x,u)$ itself, have higher marks than $(x,u)$. It should also be noted that $\inDt(x,u)$ does not necessarily enumerate induced trees. Figure \ref{fig:embedding} provides a simple example illustrating $\inDt(x,u)$.
\begin{figure}
\includegraphics[scale=0.35]{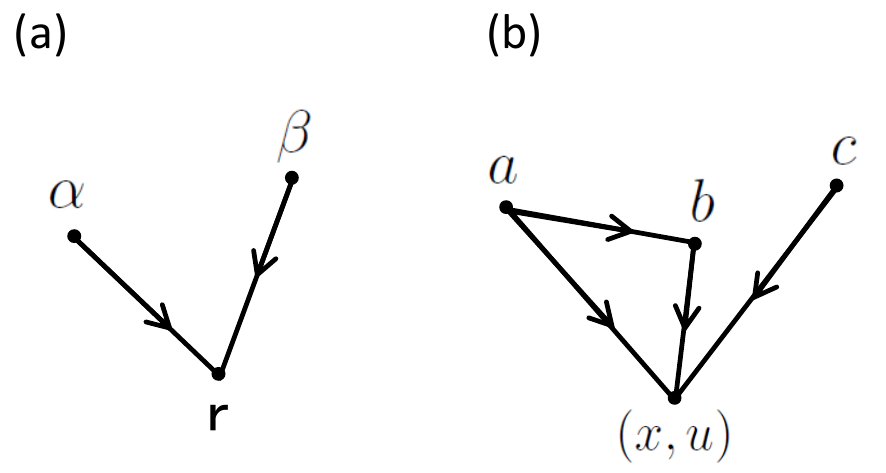}
\caption{\label{fig:embedding} \footnotesize{(a) A tree $\ms{T}$ with $3$ nodes including a root $\ms{r}$. (b) Suppose that four Poisson points, $a$, $b$, $c$, and $(x,u)$ are observed in $\bbT$. Then, $\inDt(x,u)$ counts $6$ injective graph homomorphisms; that is, $f_1(\alpha)=a$, $f_1(\beta)=b$, $f_2(\alpha)=b$, $f_2(\beta)=a$, $f_3(\alpha)=a$, $f_3(\beta)=c$, $f_4(\alpha)=c$, $f_4(\beta)=a$, $f_5(\alpha)=b$, $f_5(\beta)=c$, and $f_6(\alpha)=c$, $f_6(\beta)=b$, while all $f_i$'s  map $\ms{r}$ to $(x,u)$.}}
\end{figure}

One of the  central quantities of interest is the expected value
\begin{equation}  \label{e:def.mu.u}
\mu(u) := \E \big[ \inDt(x,u) \big], \ \ (x,u) \in \bbT. 
\end{equation}
By the stationarity of $\mP$, \eqref{e:def.mu.u} does not depend on the spatial coordinate $x$.
Now, the sub-tree counts of our interest can be defined by 
\begin{equation}  \label{e:def.sub-tree.counts}
 \sum_{P\in \mP_n} \inDt(P), \ \ \ n\ge1. 
\end{equation}
Additionally, we will examine the behavior of the associated point process 
\begin{equation}  \label{e:def.pp}
\sum_{P\in \mP_n} \delta_{a_n^{-1} \inDt(P)}, \ \ \ n\ge1, 
\end{equation}
where $\delta$ denotes the Dirac measure and $a_n$ is a scaling constant defined as 
$$
a_n := \mu\Big( \frac{1}{n} \Big), \ \ \ n \ge 1.
$$
The primary conclusion of our finding is that the behavior of \eqref{e:def.sub-tree.counts} and \eqref{e:def.pp} largely depends on the values of $\gamma$ in \eqref{e:edge:ADRCM}. If $\gamma$ is large enough, i.e., sufficiently close to $1$, then due to \eqref{e:edge:ADRCM}, the root $P=(X,U)\in \mP$ counted by \eqref{e:def.sub-tree.counts} and \eqref{e:def.pp}, tends to become a "hub" node, especially when $U$ is small enough. This hub node is more likely to connect with many other nodes of higher marks, thereby dominating the sub-tree counts in \eqref{e:def.sub-tree.counts}. As a result, with proper normalization, \eqref{e:def.sub-tree.counts} converges weakly to a stable distribution. Conversely, when $\gamma$ is near zero, the influence of such hub nodes decreases significantly, leading the sub-tree counts \eqref{e:def.sub-tree.counts} to follow a standard CLT with Gaussian limits, as will be detailed in a follow-up work for certain tree types \cite{aa}.

We start by exploring the asymptotics of the expectation  $\mu(u)$.
 The proof of Proposition \ref{p:exp.asym} below is deferred to the Appendix in Section \ref{sec:proofs}. Throughout the paper, denote by $C$ a generic positive constant, which varies between (and even within) lines and is independent of $u$ or $n$ depending on the context. 
 
Given a tree $\ms{T}$ as above, let $v_1,\dots,v_\ell$ be the leaves with no further children. For $i=1,\dots,\ell$, let $m_i$ be the number of nodes between $v_i$ and the nearest intersection node, excluding both $v_i$ and the intersection node themselves.  If the path beginning at $v_i$ does not merge with other paths until it reaches the root $\ms{r}$, then $m_i$ is defined as the number of vertices between $v_i$ and $\ms{r}$ without counting $v_i$ and $\ms{r}$ themselves. See Figure \ref{fig:Y-shape.and.non.Y-shape} (a) for an illustration of a tree of $5$ leaves. 
 
\begin{proposition}  \label{p:exp.asym}
Given a tree $\ms{T}$ as above, we have, as $u\to 0$, 
\begin{equation}  \label{e:exp.asym}
\mu(u) \sim C u^{-\ell\gamma} (\log u^{-1})^m, 
\end{equation}
where  $m:=\sum_{i=1}^\ell m_i$. Furthermore, $\mu(u)$ is increasing as $u\downarrow 0$. 
\end{proposition}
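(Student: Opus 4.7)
The plan is to use Palm theory for the Poisson process $\mP$: since the edges of $\ms{AD}(\beta,\gamma)$ are deterministic functions of the vertex positions and marks, the Campbell-Mecke formula gives
\begin{equation*}
\mu(u) = \int_{\bbT^a} \prod_{(c \to p) \in E(\ms{T})} \one\bigl\{u_p \le u_c,\ |x_c - x_p| \le \beta u_p^{-\gamma} u_c^{\gamma - 1}\bigr\}\, d(x_1,u_1)\cdots d(x_a,u_a),
\end{equation*}
with the root fixed at $(x_{\ms{r}}, u_{\ms{r}}) = (0, u)$. I would then evaluate this integral iteratively, integrating the non-root vertices in an order from the leaves upward to $\ms{r}$, while tracking the asymptotic form of the running integrand as $u \downarrow 0$. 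The base case is transparent: the integral over a leaf's position and mark, subject to the edge condition from its parent (whose mark is $v$), equals $\tfrac{2\beta}{\gamma}(v^{-\gamma} - 1) \sim \tfrac{2\beta}{\gamma} v^{-\gamma}$.

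For each non-root vertex $w$ of $\ms{T}$, let $T_w$ denote the subtree rooted at $w$ lying below $w$ (including $w$), and let $F_w(v)$ be the integral over positions and marks of all vertices in $T_w$ of the product of edge indicators, conditional on the parent of $w$ being placed at $(0, v)$. I would prove by induction on the depth of $T_w$ that
\begin{equation*}
F_w(v) \sim C_w\, v^{-\alpha_w}\bigl(\log(1/v)\bigr)^{k_w}, \qquad v \downarrow 0,
\end{equation*}
with $(\alpha_w, k_w)$ defined recursively by $(\gamma, 0)$ at leaves, and for an internal $w$ with children $c_1,\dots,c_d$, setting $\alpha := \sum_i \alpha_{c_i}$ and $K := \sum_i k_{c_i}$, we have $(\alpha_w, k_w) = (\gamma, K+1)$ in the \emph{Case 1} regime where $d = 1$ and $\alpha_{c_1} = \gamma$, and $(\alpha_w, k_w) = (\alpha, K)$ otherwise. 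The inductive step reduces to the elementary evaluations
\begin{equation*}
\int_v^1 u^{-1}\bigl(\log(1/u)\bigr)^K du = \tfrac{(\log(1/v))^{K+1}}{K+1}, \qquad \int_v^1 u^{\gamma - 1 - \alpha}\bigl(\log(1/u)\bigr)^K du \sim \tfrac{1}{\alpha - \gamma}\, v^{\gamma - \alpha}\bigl(\log(1/v)\bigr)^K \ (\alpha > \gamma),
\end{equation*}
paired with the $x$-integration over the edge indicator to $w$'s parent, which yields a factor of $2\beta v^{-\gamma} u^{\gamma - 1}$.

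Combining over the children of $\ms{r}$ then yields $\mu(u) \sim C u^{-A}(\log(1/u))^K$, where $A$ and $K$ are the sums of $\alpha_c$ and $k_c$ over children $c$ of $\ms{r}$. A quick induction identifies $\alpha_w$ with $\gamma$ times the number of leaves of $T_w$ --- the recursion sums the leaf counts across children in both regimes --- giving $A = \ell\gamma$. For $K$, a $\log$ factor is appended precisely when the step at $w$ falls in Case 1, which occurs iff $T_w$ is a single descending path to some leaf $v_i$. Along the path from leaf $v_i$ to the nearest branching vertex (or to $\ms{r}$), these $w$ are exactly the $m_i$ intermediate vertices, so $K = \sum_i m_i = m$. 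Monotonicity follows from a coupling argument on $\mP$: for $u_1 < u_2$, any embedding of $\ms{T}$ rooted at $(0, u_2)$ remains valid with the root mark lowered to $u_1$, since the edge condition $|x_c| \le \beta u_\ast^{-\gamma} u_c^{\gamma - 1}$ loosens as $u_\ast$ decreases while all non-root marks already exceed $u_2 > u_1$; hence $\inDt(0,u_1) \ge \inDt(0,u_2)$ pathwise, so $\mu(u_1) \ge \mu(u_2)$.

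The main technical point is the careful propagation of the asymptotic equivalence across the induction, particularly in Case 1, where the integral against $u^{-1}$ collects contributions from the whole interval $[v,1]$. A standard splitting argument (isolating $u \le u_0$ for a small threshold $u_0$ on which the inductive $(1 + o(1))$ correction is uniformly small, and using boundedness on $[u_0, 1]$) shows the subleading corrections contribute $o((\log(1/v))^{K+1})$ to the integral, so the asymptotic equivalence persists at each step.
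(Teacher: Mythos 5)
Your proposal is correct and follows essentially the same route as the paper: apply the Mecke formula, integrate out spatial coordinates, and then integrate the mark variables from the leaves toward the root, with the dichotomy between an exponent equal to $-1$ (which produces an extra logarithm, exactly at the $m_i$ intermediate vertices on unbranched segments) and an exponent below $-1$ (no logarithm, after paths merge), yielding $u^{-\ell\gamma}(\log u^{-1})^m$; your $(\alpha_w,k_w)$ induction is just a systematic packaging of the paper's Lemma \ref{l:sequential.integral} and the merging computation \eqref{e:one.step.tree}--\eqref{e:integration.by.parts}. The only minor difference is the monotonicity claim, which you obtain by a pathwise coupling (lowering the root mark only relaxes the edge constraints) rather than from monotonicity of the exact integral representation, and both arguments are valid.
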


\begin{figure}
\includegraphics[scale=0.35]{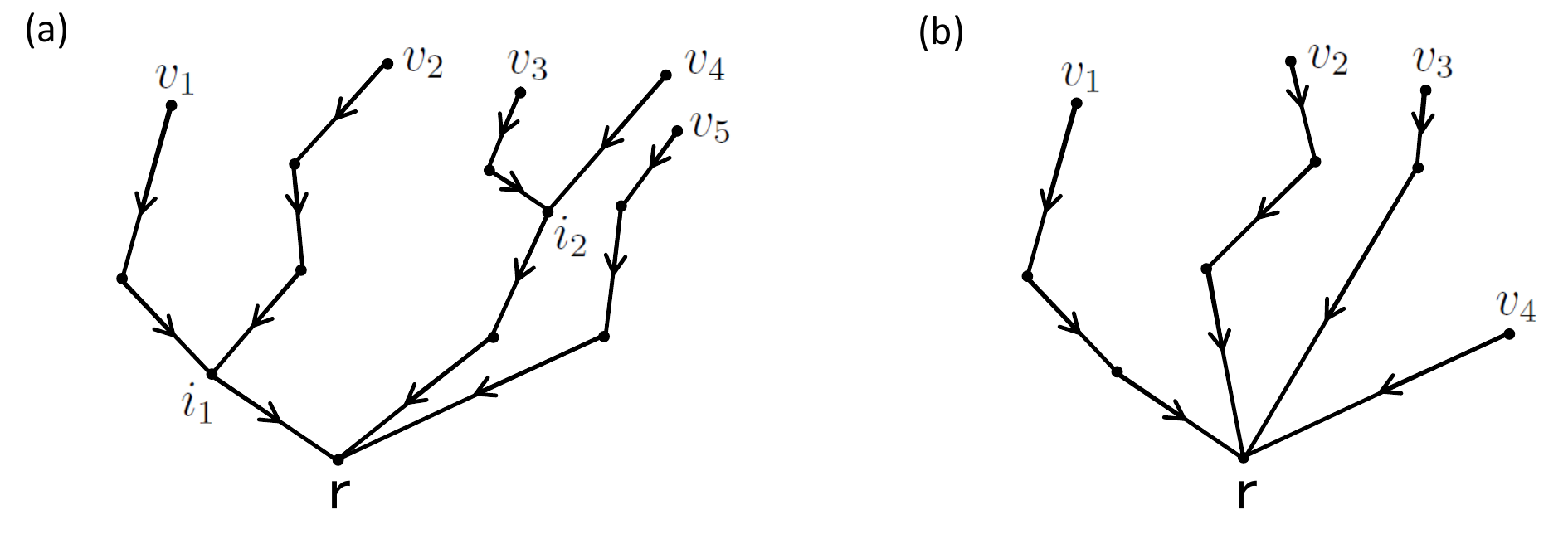}
\caption{\label{fig:Y-shape.and.non.Y-shape} \footnotesize{(a) A tree of $5$ leaves $v_1, v_2, v_3$, $v_4$, and $v_5$, where $i_1$ and $i_2$ are the nearest intersection nodes. In this case, we have $m_1=1$, $m_2=2$, $m_3=1$, $m_4=0$, and $m_5=2$. (b) A tree of $4$ leaves $v_1, v_2, v_3$, and $v_4$. This tree does not hold intersection nodes such as $i_1$, $i_2$ in  Case (a). Here we see that  $m_1=m_2=2$, $m_3=1$, and $m_4=0$.}}
\end{figure}

Proposition \ref{p:exp.asym} indicates that $\mu(u)$ is a regularly varying function at the origin with exponent $-\ell\gamma$. We denote this by $\mu\in \ms{RV}_{-\ell\gamma}^{(0)}$. 

In contrast to the general tree structures   in Proposition \ref{p:exp.asym}, we now start focusing on more specific trees. More concretely, we consider a tree with $\ell$ leaves $v_1, \dots, v_\ell$ with no further children, such that for each $i=1, \dots, \ell$, the path beginning at $v_i$ does not merge with other paths beginning at different $v_j$'s, $j \neq i$, before reaching the root $\ms{r}$. Assume further that for each $i=1, \dots, \ell$, there are exactly $m_i$ nodes between $v_i$ and $\ms{r}$ (excluding $v_i$ and $\ms{r}$); see Figure \ref{fig:Y-shape.and.non.Y-shape} (b).  Finally, define again $m := \sum_{i=1}^\ell m_i$. The proof of Proposition \ref{p:var.asym.specific} below is presented in the Appendix.

  
  
  
  
  
  

\begin{proposition}  \label{p:var.asym.specific}
For the tree $\ms{T}$ defined in the last paragraph (see also Figure \ref{fig:Y-shape.and.non.Y-shape} (b)), there exists a finite constant $C>0$ such that  
\begin{equation}  \label{e:var.asym.specific}
\text{Var}\big( \inDt(u) \big) \le \begin{cases}
Cu^{-2\ell\gamma} (\log u^{-1})^{2m-1} & \text{if } m\ge1, \\[5pt]
Cu^{-(2\ell-1)\gamma} & \text{if } m=0, 
\end{cases}
\ \ \ u\in (0,1). 
\end{equation}
\end{proposition}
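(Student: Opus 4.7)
The plan is to compute $\text{Var}\big(\inDt(u)\big) = \E[\inDt(u)^2] - \mu(u)^2$ by expanding the second moment through the multivariate Palm (Mecke) formula for $\mP$ and decomposing the resulting sum according to how two injective graph homomorphisms $f_1, f_2\colon \ms T \to \ms{AD}(\beta,\gamma)$ with $f_i(\ms r) = (0,u)$ overlap beyond the shared root. Writing $\inDt(u)^2 = \sum_{f_1, f_2}\one\{f_1\text{ valid}\}\one\{f_2\text{ valid}\}$ and applying Mecke, one obtains
\begin{equation*}
\E[\inDt(u)^2] = \sum_{\pi} c_\pi\, I_\pi(u),
\end{equation*}
where $\pi$ ranges over the finitely many partial bijections (overlap patterns) between non-root vertices of two copies of $\ms T$, $j(\pi) \in \{0,\dots,a\}$ denotes the number of identified pairs, $c_\pi > 0$ is a combinatorial factor depending only on $\ms T$ and $\pi$, and $I_\pi(u)$ is the integral of the product of edge indicators over positions and marks of the $2a - j(\pi)$ free vertices of the combined graph $G_\pi$. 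The pattern $\pi_0$ with $j(\pi_0) = 0$ contributes $\mu(u)^2$ up to a lower-order distinctness correction and thus cancels $\mu(u)^2$ in the variance, reducing the problem to bounding each $I_\pi(u)$ with $j(\pi) \ge 1$.

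When $m = 0$, the tree $\ms T$ is an $\ell$-star and every non-trivial overlap pattern identifies $j \in \{1, \dots, \ell\}$ leaves of the two copies. The merged graph $G_\pi$ is then a $(2\ell - j)$-star, and repeating the integration scheme from the proof of Proposition \ref{p:exp.asym} delivers $I_\pi(u) \le C u^{-(2\ell - j)\gamma} \le C u^{-(2\ell - 1)\gamma}$, establishing the stated bound.

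When $m \ge 1$, the combined graph $G_\pi$ need not be a spider. We upper-bound $I_\pi(u)$ by dropping edge constraints, retaining only a spanning subgraph $T_\pi$ obtained by selecting, at each non-root vertex of $G_\pi$, exactly one out-edge towards a lower-mark neighbor. The resulting $T_\pi$ is a spanning tree (an in-arborescence directed towards $(0,u)$), so Proposition \ref{p:exp.asym} applied to $T_\pi$ yields
\begin{equation*}
I_\pi(u) \le C u^{-\ell_\pi \gamma}(\log u^{-1})^{m_\pi},
\end{equation*}
with $\ell_\pi$ the number of leaves of $T_\pi$ and $m_\pi$ the corresponding sum $\sum_i m_i$ defined as in Proposition \ref{p:exp.asym}. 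Letting $q_\pi$ be the number of intersection nodes of $T_\pi$, the vertex-count identity $\ell_\pi + m_\pi + q_\pi = 2\ell + 2m - j(\pi)$ together with $\ell_\pi \le 2\ell$ (the root of $T_\pi$ has at most $2\ell$ children) splits into two sub-cases. If $\ell_\pi = 2\ell$, then $m_\pi = 2m - j(\pi) - q_\pi \le 2m - 1$ since $j(\pi) \ge 1$, giving the bound directly. If $\ell_\pi \le 2\ell - 1$, then for $u$ small enough
\begin{equation*}
u^{-\ell_\pi \gamma}(\log u^{-1})^{m_\pi} \le C u^{-(2\ell - 1)\gamma}(\log u^{-1})^{m_\pi} \le C u^{-2\ell \gamma}(\log u^{-1})^{2m-1},
\end{equation*}
using that $u^{\gamma}$ dominates any polylogarithmic factor as $u \to 0$.

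The main technical obstacle is the combinatorial bookkeeping: constructing the spanning tree $T_\pi$ uniformly over the finitely many overlap patterns $\pi$ and verifying the identity $\ell_\pi + m_\pi + q_\pi = 2\ell + 2m - j(\pi)$ alongside the leaf bound $\ell_\pi \le 2\ell$. Once this is secured, summing the bounds on $I_\pi(u)$ over patterns with $j(\pi) \ge 1$ delivers the claimed inequality \eqref{e:var.asym.specific}.
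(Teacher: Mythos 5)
Your overall framework (Mecke expansion of the second moment over overlap patterns, cancellation of the zero-overlap term against $\mu(u)^2$, and the reduction of the $m=0$ case to merged stars) matches the paper, and the $m=0$ case is fine. The gap is in the step where, for $m\ge1$, you bound $I_\pi(u)$ by discarding all constraints outside a spanning in-arborescence $T_\pi$ and then invoke Proposition \ref{p:exp.asym} with the claim $\ell_\pi\le 2\ell$. That claim is false exactly in the critical pattern: identify an internal node of a leg of copy $1$ with an internal node of a leg of copy $2$, both at distance at least two from the root, so that in the merged graph the identified vertex $(z_0,w_0)$ has two outgoing descending paths to $(0,u)$ (the cycle configuration \eqref{e:cycle.specific.tree} with $d\ge1$ and $e\ge1$). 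Whichever out-edge of $(z_0,w_0)$ you keep, the top vertex of the other descending path, say $(z_1',w_1')$, has no remaining in-edge (in the spider tree every internal node has a unique child), hence becomes an extra leaf of $T_\pi$: no leaves were merged, so $\ell_\pi=2\ell+1$. Your bound then reads $Cu^{-(2\ell+1)\gamma}(\log u^{-1})^{m_\pi}$, which exceeds the target $Cu^{-2\ell\gamma}(\log u^{-1})^{2m-1}$ by a factor of order $u^{-\gamma}$; no choice of out-edges avoids this when $d,e\ge1$, so the failure is structural, not a matter of bookkeeping. (Your parenthetical justification ``the root of $T_\pi$ has at most $2\ell$ children'' does not bound the number of leaves, and the vertex-count relation $\ell_\pi+m_\pi+q_\pi=2\ell+2m-j(\pi)$ is only an inequality, since path vertices between intersection nodes and the root are in none of the three categories; the latter is harmless, the former is where the argument breaks.)

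The fix is exactly what the paper's proof does and what full deletion of the non-tree edge throws away: when breaking the cycle at $(z_0,w_0)\to(z_1',w_1')$, drop only the spatial indicator of that edge but retain the mark ordering $w_1'\le w_0$. Then the orphaned vertex is not a free leaf with mark ranging up to $1$; its mark integral gives $\int_{w_2'}^{w_0}(w_1')^{\gamma-1}\dif w_1'\le Cw_0^{\gamma}$, and this extra factor $w_0^{\gamma}$ turns the $w_0$-integral into $\int_u^1 w_0^{-1}(\log w_0^{-1})^{s+t-2}\dif w_0$, which is what produces the bound $Cu^{-2\gamma}(\log u^{-1})^{s+t+d+e-2}$ for the merged piece and hence the loss of (at least) one logarithm, rather than a gain of $u^{-\gamma}$, relative to $\mu(u)^2$. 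In other words, the decisive saving in the $m\ge1$ case comes from a retained mark coupling across the broken edge, not from a pure spanning-tree comparison; without it, the claimed inequality \eqref{e:var.asym.specific} cannot be reached by your argument. (You would also need, as the paper notes, a cycle-breaking scheme for overlaps of size $\ge2$ before reducing to the single-common-point analysis, but that is secondary to the issue above.)
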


\begin{remark}
We stress that the variance asymptotics in Proposition \ref{p:var.asym.specific} imposes a stricter constraint than  Proposition  \ref{p:exp.asym}. In our conjecture, this restriction is not merely an artifact of the proof but suggests something more fundamental: the presence of intersection nodes as in Figure \ref{fig:Y-shape.and.non.Y-shape} (a), may result in a variance rate that differs from those in Proposition \ref{p:var.asym.specific}.
\end{remark}

We now recall  that stable limit theorems arise only when $\gamma$ is sufficiently large. Specifically, for the case of sub-tree counts, we need to require $\gamma > 1/(2\ell)$.
It follows from Proposition \ref{p:exp.asym} that 
$$
a_n \sim Cn^{\ell\gamma} (\log n)^m,  \ \ \text{as } n\to\infty, 
$$
with the same constant $C>0$ as in \eqref{e:exp.asym}, and hence, $(a_n)_{n\ge1}$ is a regularly varying sequence at infinity with exponent $\ell\gamma$. We denote this as $(a_n)_{n\ge1} \in \ms{RV}_{\ell\gamma}^{(\infty)}$. Finally, since $\mu$ is increasing as $u\downarrow0$, the general inverse of $\mu$, defined by 
$$
\muti(x) := \inf \big\{ s\ge0:  \mu (s) \le x\big\}, 
$$
is also regularly varying with index $-1/(\ell\gamma)$; that is,  $\muti \in \ms{RV}_{-1/(\ell\gamma)}^{(\infty)}$ (see  Proposition 2.6 in \cite{resnick:2007}). 

Our first  primary result focuses on the asymptotic behavior of the point process in \eqref{e:def.pp}. Before presenting this result, it is important to note that all subsequent results in this section are specifically applicable to trees as outlined in Proposition \ref{p:var.asym.specific}. 

Let $M_p\big( (0,\infty] \big)$ denote  the space of point measures on $(0,\infty]$. Henceforth, we write $\ms{PPP}(\xi)$ for a Poisson point process with a given intensity measure $\xi$. 

\begin{theorem}[Point process convergence for sub-tree counts]  \label{t:pp.weak.conv}
Let $1/(2\ell) < \gamma <1$. Then, in the space $M_p\big( (0,\infty] \big)$, the point process \eqref{e:def.pp}  converges weakly to a Poisson point process on $(0,\infty]$ with intensity measure $\kappa_{\ell\gamma}$, such that $\kappa_{\ell\gamma}\big((y,\infty]\big) = y^{-1/(\ell\gamma)}$, $y>0$. 
\end{theorem}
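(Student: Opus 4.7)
My plan is to decouple the randomness of the marks $\inDt(P)$ from that of the Poisson process $\mP_n$ by introducing the ``deterministic'' auxiliary point process
$$
\widetilde\Phi_n := \sum_{P=(X,U)\in\mP_n} \delta_{a_n^{-1}\mu(U)},
$$
obtained from $\mP_n$ by the \emph{deterministic} map $(x,u)\mapsto a_n^{-1}\mu(u)$. By the mapping theorem, $\widetilde\Phi_n$ is itself a Poisson point process on $(0,\infty]$, and since $\mu$ is decreasing with generalized inverse $\muti$, its intensity on $(y,\infty]$ equals $n\,\muti(a_n y)$. Combining $a_n=\mu(1/n)$ with the regular variation $\muti\in\ms{RV}_{-1/(\ell\gamma)}^{(\infty)}$ yields $n\,\muti(a_n y)\to y^{-1/(\ell\gamma)}$ for every $y>0$, so that $\widetilde\Phi_n\Rightarrow\ms{PPP}(\kappa_{\ell\gamma})$ by the standard fact that vague convergence of Poisson intensities lifts to weak convergence of the processes.

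The bulk of the remaining work is to show that \eqref{e:def.pp} is asymptotically indistinguishable from $\widetilde\Phi_n$. Fix $y>0$ and introduce the truncation level $\hat u_n := \muti(a_n y/2)$, which satisfies $n\hat u_n = O(1)$ by regular variation. First, I would control points with $U>\hat u_n$: for such a $P$, $\mu(U)\le a_n y/2$, so by Chebyshev and Proposition~\ref{p:var.asym.specific},
$$
\E\bigl[\#\{P\in\mP_n:\ U>\hat u_n,\ \inDt(P)>a_n y\}\bigr] \le \frac{4n}{(a_n y)^2}\int_{\hat u_n}^{1}\mathrm{Var}\bigl(\inDt(u)\bigr)\,du,
$$
and a direct computation using $a_n\in\ms{RV}_{\ell\gamma}^{(\infty)}$ together with $\gamma > 1/(2\ell)$ shows the right-hand side vanishes. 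Second, on the complementary event $\{U\le\hat u_n\}$, $\inDt(P)$ should concentrate around $\mu(U)$; Markov's inequality on the expected number of ``bad hubs'' gives
$$
\P\bigl(\exists P\in\mP_n:\ U\le \hat u_n,\ |\inDt(P)/\mu(U)-1| > \epsilon\bigr) \le \frac{n}{\epsilon^2}\int_0^{\hat u_n}\frac{\mathrm{Var}(\inDt(u))}{\mu(u)^2}\,du,
$$
and combining Propositions~\ref{p:exp.asym} and \ref{p:var.asym.specific}, the ratio $\mathrm{Var}(\inDt(u))/\mu(u)^2$ is $O(1/\log u^{-1})$ when $m\ge 1$ and $O(u^\gamma)$ when $m=0$; in either case the whole expression tends to zero despite the factor $n$ in front, since $n\hat u_n$ stays bounded while the extra factor vanishes.

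On the intersection of these two high-probability events, the atoms of \eqref{e:def.pp} above $y(1+\epsilon)$ and the atoms of $\widetilde\Phi_n$ above $y(1-\epsilon)$ are in natural bijection with mark ratios lying in $(1-\epsilon,1+\epsilon)$. Passing to the weak limit and then letting $\epsilon\downarrow 0$ transfers the limit of $\widetilde\Phi_n$ to \eqref{e:def.pp}; this can be formalized via the Laplace-functional characterization on the compactly supported continuous test functions generating the vague topology of $(0,\infty]$.

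The principal technical hurdle is the concentration step: although the variables $\{\inDt(P):P\in\mP_n\}$ are jointly dependent through the shared Poisson environment, the argument only requires marginal second-moment bounds, since Markov's inequality applied via stationarity produces a single integral of the \emph{marginal} variance. What must work in our favor, and what dictates the threshold $\gamma > 1/(2\ell)$, is that the variance estimate of Proposition~\ref{p:var.asym.specific} gains precisely a $\log u^{-1}$ or $u^\gamma$ factor over the trivial bound $\mu(u)^2$---just enough to beat the heavy-tail scaling that governs the stable regime.
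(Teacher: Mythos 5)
Your proposal is correct and takes essentially the same route as the paper: both replace $\inDt(P)$ by the deterministic mark functional $\mu(U)$, prove Poisson convergence for $\sum_{P=(X,U)\in\mP_n}\delta_{a_n^{-1}\mu(U)}$ (you via the mapping theorem, the paper via de-Poissonization to i.i.d.\ uniforms), and control the substitution error through the Mecke formula, Chebyshev's inequality and Propositions \ref{p:exp.asym} and \ref{p:var.asym.specific}, with $\gamma>1/(2\ell)$ entering exactly where you indicate. The only steps you leave implicit---the Karamata-type verification that $\frac{n}{a_n^2}\int_{c/n}^{1}\text{Var}\big(\inDt(u)\big)\dif u\to 0$ (treating $m\ge1$ and $m=0$ separately) and the formalization of the sandwich via test functions---are precisely the computations the paper carries out, and they go through as you anticipate.
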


\begin{proof}

We first demonstrate that the process  $\sum_{P=(X,U)\in \mathcal P_n}\delta_{a_n^{-1}\mu(U)}$ converges weakly to $\ms{PPP}(\kappa_{\ell\gamma})$. To this end, it suffices to show that as $n\to\infty$, 
\begin{align}
&\sum_{P=(X,U)\in \mathcal P_n}\delta_{a_n^{-1}\mu(U)} - \sum_{i=1}^n \delta_{a_n^{-1}\mu(U_i)} \stackrel{p}{\to} \emptyset \ \ \text{in } M_p\big((0,\infty]\big),  \label{e:diff.de.Poissonization} \\
&\sum_{i=1}^n \delta_{a_n^{-1}\mu(U_i)} \Rightarrow \ms{PPP}(\kappa_{\ell\gamma}), \ \ \text{in }  M_p\big((0,\infty]\big), \label{e:de.Poissonization.weak.conv}
\end{align}
where $\emptyset$ is the null measure (i.e., the measure assigning zeros to all measurable sets) and $U_1,\dots,U_n$ are i.i.d.~uniform random variables over $(0,1)$. 
For the proof of \eqref{e:diff.de.Poissonization}, it suffices to show that 
$$
\sum_{P=(X,U)\in \mathcal P_n} f \big( a_n^{-1}\mu(U) \big)  -\sum_{i=1}^n f \big( a_n^{-1}\mu(U_i) \big)  \stackrel{p}{\to} 0, 
$$
for every $f\in C_K^+\big((0,\infty]\big)$, where $C_K^+\big((0,\infty]\big)$ represents a family of non-negative and continuous functions on $(0,\infty]$ with compact support. Since $f$ has compact support, there exists $\delta_0>0$ so that $\ms{supp}(f)\subset [\delta_0,\infty]$ ($\ms{supp}(f)$ denotes the support of $f$). It then follows from the regular variation of $\muti$  that as $n\to\infty$, 
\begin{align}
\begin{split}  \label{e:de-Poi.approx}
&\E  \bigg[ \bigg|\sum_{P=(X,U)\in \mP _n} f \big( a_n^{-1}\mu(U) \big) -\sum_{i=1}^n f \big( a_n^{-1}\mu(U_i) \big)  \bigg| \bigg] \le \E \big[ f(a_n^{-1}\mu(U_1)) \big] \E \big[ |\mP(\bbT_n)-n| \big] \\
&\le \| f \|_\infty \P \big( a_n^{-1}\mu(U_1) \ge \delta_0 \big)  \E \big[ |\mP(\bbT_n)-n| \big] \\
&\le C\|f\|_\infty \muti(a_n\delta_0) \sqrt{n} \le C\|f\|_\infty \delta_0^{-1/(\ell\gamma)} n^{-1/2}\to0, \ \ \ n\to\infty,
\end{split} 
\end{align}
thereby yielding \eqref{e:diff.de.Poissonization}. 

Next, according to Theorem 6.2 and Lemma 6.1 in \cite{resnick:2007}, \eqref{e:de.Poissonization.weak.conv} is equivalent to 
$$
n\P\big( a_n^{-1}\mu (U_1)>y \big) \to \kappa_{\ell\gamma}\big((y,\infty]\big) = y^{-1/(\ell\gamma)}, 
$$
for every $y>0$, which however follows from the regular variation of $\muti$. 

Now, one can complete the proof, if it can be shown that as $n\to\infty$, 
$$
\sum_{P\in \mP_n} \delta_{a_n^{-1}\inDt(P)} - \sum_{P=(X,U)\in \mP_n} \delta_{a_n^{-1}\mu(U)} \stackrel{p}{\to} \emptyset, \ \ \text{in } M_p\big( (0,\infty] \big); 
$$
equivalently, for every $f\in C_K^+\big( (0,\infty] \big)$, 
$$
\sum_{P\in \mP_n} f\big(  a_n^{-1}\inDt(P)\big)  - \sum_{P=(X,U)\in \mP_n} f\big(  a_n^{-1}\mu(U)\big) \stackrel{p}{\to}0. 
$$
Recall that $\delta_0>0$ satisfies $\ms{supp}(f)\subset [\delta_0,\infty]$ and fix $\delta'\in (0,\delta_0/2)$. For every $\delta\in (0,\delta_0/2)$, it follows from the Mecke formula  for Poisson point processes (see, e.g., \cite[Theorem 4.4]{LastPenroseBook}) that 
\begin{align*}
&\E \bigg[ \bigg|  \sum_{P=(X,U)\in \mP_n}f\big(  a_n^{-1}\inDt(P)\big)  - f\big(  a_n^{-1}\mu(U)\big) \bigg| \bigg]  \\
&\le n\int_0^1 \E \Big[ \Big|  f\big(  a_n^{-1}\inDt(u)\big)  - f\big(  a_n^{-1}\mu(u)\big)\Big|\Big]\dif u \\
&\le 2\| f \|_\infty n\int_0^1 \P \Big( \big| \inDt(u)  -  \mu(u)  \big| >\delta a_n\Big)\dif u \\
&\quad + n\int_0^1 \E \bigg[ \Big|  f\big(  a_n^{-1}\inDt(u)\big)  - f\big(  a_n^{-1}\mu(u)\big)\Big| \, \one \Big\{ \big| \inDt(u)  - \mu(u)  \big|\le \delta a_n \Big\}  \bigg]\dif u \\
&=:A_n + B_n. 
\end{align*}

We first deal with $B_n$. Note that we may insert an additional indicator $\one\{\mu(u) > \delta' a_n\}$. Indeed, suppose $\mu(u) \le \delta' a_n$; then $f(a_n^{-1}\mu(u)) = 0$ as $a_n^{-1}\mu(u)$ lies outside of  $\ms{supp}(f)$. Moreover, by the triangle inequality, $a_n^{-1}\inDt(u) \le \delta + \delta' < \delta_0$; hence, $f(a_n^{-1}\inDt(u)) = 0$. Thus, one can add the indicator $\one\{\mu(u) > \delta' a_n\}$ to $B_n$, such  that 
\begin{align*}
B_n &= n\int_0^1 \E \bigg[ \Big|  f\big(  a_n^{-1}\inDt(u)\big)  - f\big(  a_n^{-1}\mu(u)\big)\Big| \, \\
&\qquad \qquad \times \one \Big\{ \big| \inDt(u)  - \mu(u)  \big|\le \delta a_n, \, \mu(u)>\delta' a_n \Big\}  \bigg]\dif u \\
&\le \omega_f(\delta)n\int_0^1 \one \big\{ \mu(u)>\delta'a_n \big\} \dif u = \omega_f(\delta) n\muti(\delta'a_n), 
\end{align*}
where $\omega_f$ is the modulus of continuity of $f$. The last term  converges to $(\delta')^{-1/(\ell\gamma)}\omega_f(\delta)$ as $n\to\infty$, which clearly tends to $0$ as $\delta\downarrow0$, since $f$ is uniformly continuous. 

It is now sufficient to show that $A_n\to0$ as $n\to\infty$ for every $\delta>0$. By Chebyshev's inequality, we only need to demonstrate that 
\begin{equation}  \label{e:A_n.conv}
\frac{n}{a_n^2} \int_{\vep_1 n^{-1}}^1 \text{Var}\big( \inDt(u) \big) \dif u \to 0, \ \ \text{as } n\to\infty, 
\end{equation}
for every $\vep_1>0$. 
If $m\ge1$, we obtain from Proposition \ref{p:var.asym.specific} that 
\begin{align*}
\frac{n}{a_n^2} \int_{\vep_1n^{-1}}^1 \text{Var}\big( \inDt(u) \big) \dif u &\le \frac{Cn}{a_n^2}  \int_{\vep_1n^{-1}}^1 u^{-2\ell\gamma} (\log u^{-1})^{2m-1} \dif u \le \frac{Cn}{a_n^2} \int_0^{\vep_1^{-1}n} v^{2\ell\gamma-2} (\log v)^{2m-1}\dif v. 
\end{align*}
Since $v^{2\ell\gamma-2} (\log v)^{2m-1} \in \ms{RV}_{2\ell\gamma-2}^{(\infty)}$ with $2\ell\gamma-2>-1$, Karamata's theorem (see \citep[Theorem 2.1]{resnick:2007}) yields that as $n\to\infty$, 
\begin{align*}
\frac{n}{a_n^2} \int_0^{\vep_1^{-1}n} v^{2\ell\gamma-2} (\log v)^{2m-1}\dif v &\sim \frac{n}{a_n^2}\cdot \frac{\vep_1^{-1}n}{2\ell\gamma-1}\, (\vep_1^{-1}n)^{2\ell\gamma-2} \big( \log (\vep_1^{-1}n) \big)^{2m-1} \\
&\sim C\, \frac{(\log n -\log \vep_1)^{2m-1}}{(\log n)^{2m}} \to 0, \ \ \ n\to\infty. 
\end{align*}
The last convergence is assured by the restriction $m\ge1$. 

On the other hand, if $m=0$, Proposition \ref{p:var.asym.specific} gives that 
\begin{equation}  \label{e:var.lower.epsilon1}
\frac{n}{a_n^2} \int_{\vep_1n^{-1}}^1 \text{Var}\big( \inDt(u) \big) \dif u \le Cn^{1-2\ell\gamma} \int_{\vep_1n^{-1}}^1 u^{-(2\ell-1)\gamma} \dif u. 
\end{equation}
If $1/(2\ell) < \gamma \le 1/(2\ell-1)$, the last term is upper bounded by $Cn^{1-2\ell\gamma} \log n \to 0$ as $n\to\infty$, where the logarithmic factor is needed only when $\gamma=1/(2\ell-1)$. If $1/(2\ell-1)<\gamma<1$, \eqref{e:var.lower.epsilon1} is bounded by $Cn^{-\gamma}\to 0$ as $n\to\infty$. 
\end{proof}

Theorem \ref{t:pp.weak.conv} immediately yields weak convergence for the maximum of $\big(\inDt(P), \, P \in \mP_n \big)$. 

\begin{corollary}  \label{cor:maxima}
Let $1/(2\ell) <\gamma <1$. Then, 
we have, as $n\to\infty$, 
$$
\frac{1}{a_n}\bigvee_{P\in \mP_n} \inDt(P) \Rightarrow Y_{\ell\gamma}, 
$$
where $\P(Y_{\ell\gamma} \le y) =e^{-\kappa_{\ell\gamma}((y,\infty))} = e^{-y^{-1/(\ell\gamma)}}$, i.e., $Y_{\ell\gamma}$ is a $1/(\ell\gamma)$-Fr\'echet random variable. 
\end{corollary}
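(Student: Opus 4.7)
My plan is to deduce Corollary \ref{cor:maxima} directly from Theorem \ref{t:pp.weak.conv} via the standard identification of the sample maximum with the rightmost atom of the corresponding point process, together with the continuous mapping theorem on $M_p\big((0,\infty]\big)$.

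First I would observe the elementary equivalence
\[
\bigg\{ \frac{1}{a_n} \bigvee_{P\in\mP_n} \inDt(P) \le y \bigg\} \;=\; \bigg\{ \sum_{P\in\mP_n} \delta_{a_n^{-1}\inDt(P)}\big((y,\infty]\big) = 0 \bigg\}
\]
for any $y>0$. Writing $N_n := \sum_{P\in\mP_n} \delta_{a_n^{-1}\inDt(P)}$ and $N$ for a $\ms{PPP}(\kappa_{\ell\gamma})$, this reduces the claim to showing $\P\big(N_n((y,\infty]) = 0\big) \to \P\big(N((y,\infty]) = 0\big)$ for every $y>0$.

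Next, I would invoke Theorem \ref{t:pp.weak.conv}, which yields $N_n \Rightarrow N$ in $M_p\big((0,\infty]\big)$. The evaluation functional $\Psi_y : \nu \mapsto \nu\big((y,\infty]\big)$ is continuous on the subset of $M_p\big((0,\infty]\big)$ consisting of measures putting no mass on $\{y\}$; since $\kappa_{\ell\gamma}$ is absolutely continuous (with tail $y^{-1/(\ell\gamma)}$), the Poisson limit $N$ satisfies $N(\{y\})=0$ almost surely, so $\Psi_y$ is continuous $\P_N$-a.s. The continuous mapping theorem then gives $N_n((y,\infty]) \Rightarrow N((y,\infty])$, and because the limiting distribution is Poisson with parameter $\kappa_{\ell\gamma}\big((y,\infty]\big) = y^{-1/(\ell\gamma)}$, we obtain
\[
\P\bigg( \frac{1}{a_n} \bigvee_{P\in\mP_n} \inDt(P) \le y \bigg) \;\to\; e^{-y^{-1/(\ell\gamma)}}
\]
for every $y>0$, which is exactly the distribution of a $1/(\ell\gamma)$-Fr\'echet random variable $Y_{\ell\gamma}$.

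I do not expect any serious obstacle here, since the hard analytic work (variance bounds, Mecke computation, regular variation of $\muti$) has already been absorbed into Theorem \ref{t:pp.weak.conv}. The only mild subtlety is to confirm continuity of $\Psi_y$ at $\P_N$-typical atoms, which is immediate from the absolute continuity of $\kappa_{\ell\gamma}$; alternatively one could bypass the continuous mapping theorem altogether and apply \cite[Proposition 3.21]{resnick:2007} directly to read off the distribution of the maximum from the Poisson limit.
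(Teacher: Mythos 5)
Your proposal is correct and follows essentially the same route as the paper: the paper likewise rewrites $\{a_n^{-1}\bigvee_{P\in\mP_n}\inDt(P)\le y\}$ as the event that the point process puts no mass on $(y,\infty)$ and passes to the limit via Theorem \ref{t:pp.weak.conv}, obtaining $e^{-\kappa_{\ell\gamma}((y,\infty))}$. Your explicit continuity check for the evaluation functional (using that $\kappa_{\ell\gamma}$ is atomless) is just a spelled-out version of the step the paper leaves implicit.
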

\begin{proof}
Use $\sum_\ell \delta_{j_\ell}$ to represent the Poisson point process $\ms{PPP}(\kappa_{\ell\gamma})$. It follows from Theorem \ref{t:pp.weak.conv} that for every $y>0$, 
\begin{align*}
\P \Big( \frac{1}{a_n}\bigvee_{P\in \mP_n} \inDt(P) \le y \Big) &= \P\Big(\sum_{P\in \mP_n}\delta_{a_n^{-1}\inDt(P)} \big((y,\infty)\big) = 0  \Big)\\
&\to \P \Big( \sum_\ell \delta_{j_\ell}\big((y,\infty)\big)=0 \Big) = e^{-\kappa_{\ell\gamma}((y,\infty))} = \P(Y_{\ell\gamma}\le y). 
\end{align*}
\end{proof}

Our next goal is to establish the stable limit theorem for the sub-tree counts  in \eqref{e:def.sub-tree.counts}. In contrast to Theorem \ref{t:pp.weak.conv}, we observe a phase transition based on the values of $\gamma$. Specifically, when $1/(2\ell) < \gamma < 1/\ell$, the sub-tree counts \eqref{e:def.sub-tree.counts} must be centered by its expectation, with the weak limit being a zero-mean $1/(\ell\gamma)$-stable random variable. Conversely, if $\ell>1$ and $1/\ell<\gamma < 1$, centering is unnecessary, and the weak limit is a $1/(\ell\gamma)$-stable subordinator, which is a $1/(\ell\gamma)$-stable random variable whose support is restricted to the positive half-line (see \cite{samorodnitsky:taqqu:1994}). Although the same limit theorem is expected to hold in the boundary case $\gamma = 1/\ell$, it requires a separate argument with an appropriate truncation of the centering terms. To avoid technical complexities, we have chosen not to address this boundary case in the present paper.

\begin{theorem}[Stable limit theorem for sub-tree counts]  \label{t:stable.limit}
$(i)$ If $1/(2\ell) < \gamma < 1/\ell$, then 
$$
\frac{1}{a_n}\, \Big( \sum_{P\in \mP_n} \inDt(P) - \E \Big[  \sum_{P\in \mP_n} \inDt(P)  \Big] \Big) \Rightarrow S_{1/(\ell\gamma)}, 
$$
where $S_{1/(\ell\gamma)}$ is a zero-mean $1/(\ell\gamma)$-stable random variable such that 
\begin{equation}  \label{e:ch.f.stable}
\E \big[ e^{iv S_{1/(\ell\gamma)}} \big] = \exp \Big\{ \int_0^{\infty} \big( e^{iv x} -1- iv x \big) \kappa_{\ell\gamma}(\dif x) \Big\}, \ \ \ v\in \R. 
\end{equation}
(ii) If $\ell>1$ and $1/\ell<\gamma<1$, then 
$$
\frac{1}{a_n}  \sum_{P\in \mP_n} \inDt(P) \Rightarrow \widetilde S_{1/(\ell\gamma)}, 
$$
where $\widetilde S_{1/(\ell\gamma)}$ is a $1/(\ell\gamma)$-stable subordinator such that 
$$
\E \big[ e^{iv \widetilde S_{1/(\ell\gamma)}} \big] = \exp \Big\{ \int_0^{\infty} \big( e^{iv x} -1 \big) \kappa_{\ell\gamma}(\dif x) \Big\}, \ \ \ v\in \R. 
$$
\end{theorem}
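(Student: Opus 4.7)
The proof combines the point process convergence of Theorem~\ref{t:pp.weak.conv} with a standard extreme-value truncation argument, treating both regimes in parallel apart from the centering.

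\emph{Truncation.} Fix $\epsilon>0$ and decompose $\sum_{P\in\mP_n}\inDt(P) = \Sigma_n^{>\epsilon} + \Sigma_n^{\le\epsilon}$ according to whether $\inDt(P)>\epsilon a_n$ or not. The functional $\nu\mapsto\int_{(\epsilon,\infty]}x\,\nu(\dif x)$ is continuous on $M_p((0,\infty])$ at measures that do not charge $\{\epsilon\}$, and $\ms{PPP}(\kappa_{\ell\gamma})$ almost surely does not. Hence Theorem~\ref{t:pp.weak.conv} together with the continuous mapping theorem yield
\[
a_n^{-1}\Sigma_n^{>\epsilon}\ \Longrightarrow\ L^{(\epsilon)}:=\int_\epsilon^\infty x\,N(\dif x),\qquad N\sim\ms{PPP}(\kappa_{\ell\gamma}).
\]
Letting $\epsilon\downarrow 0$ on the limit side, $L^{(\epsilon)}\uparrow\widetilde S_{1/(\ell\gamma)}$ almost surely in case (ii) (finite a.s.\ because $1/(\ell\gamma)<1$), and $L^{(\epsilon)}-\int_\epsilon^\infty x\,\kappa_{\ell\gamma}(\dif x)\Rightarrow S_{1/(\ell\gamma)}$ in distribution in case (i). The two characteristic functions in the statement follow from the Campbell/L\'evy--Khintchine formula for compensated Poisson integrals, noting that $\int_0^\infty(e^{ivx}-1-ivx)\kappa_{\ell\gamma}(\dif x)$ and $\int_0^\infty(e^{ivx}-1)\kappa_{\ell\gamma}(\dif x)$ are absolutely convergent in cases (i) and (ii), respectively.

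\emph{Residual control.} It remains to verify that, with the natural centering,
\[
\lim_{\epsilon\downarrow 0}\limsup_{n\to\infty}\P\bigl(a_n^{-1}|R_n^{\le\epsilon}|>\delta\bigr)=0,\qquad\forall\delta>0,
\]
where $R_n^{\le\epsilon}:=\Sigma_n^{\le\epsilon}$ in case (ii) and $R_n^{\le\epsilon}:=\Sigma_n^{\le\epsilon}-\E[\Sigma_n^{\le\epsilon}]$ in case (i). In case (ii) we apply Mecke's formula together with Karamata's theorem (using $\mu\in\ms{RV}_{-\ell\gamma}^{(0)}$) to show $a_n^{-1}\E[\Sigma_n^{\le\epsilon}]\to\int_0^\epsilon x\,\kappa_{\ell\gamma}(\dif x)=O(\epsilon^{1-1/(\ell\gamma)})$, and Markov's inequality then concludes. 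In case (i) we must bound $a_n^{-2}\text{Var}(\Sigma_n^{\le\epsilon})$. The diagonal term $a_n^{-2}n\int_0^1\E[\inDt(u)^2\one\{\inDt(u)\le\epsilon a_n\}]\,\dif u$ is handled by splitting at $u^*:=\muti(\epsilon a_n)\asymp\epsilon^{-1/(\ell\gamma)}/n$, using the trivial truncation $\inDt(u)^2\le(\epsilon a_n)^2$ for $u<u^*$ and the bound of Proposition~\ref{p:var.asym.specific} for $u\ge u^*$; this produces an estimate of order $\epsilon^{2-1/(\ell\gamma)}$ up to logarithmic factors, which vanishes as $\epsilon\downarrow 0$ precisely because $\gamma>1/(2\ell)$.

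\emph{Main obstacle.} The subtlest step is the case-(i) variance bound, in particular the off-diagonal pair term $\iint\text{Cov}\big(\inDt(P),\inDt(Q)\big)\,\nu^{\otimes 2}(\dif P\,\dif Q)$ arising from Poisson roots $P,Q$ whose sub-tree neighbourhoods overlap spatially in $\bbT$. Controlling this contribution calls for a two-root analogue of the counting argument that underlies Proposition~\ref{p:var.asym.specific}, combined with the geometric independence of the ADRCM at separations exceeding $\beta U_P^{-\gamma}$. It is precisely this two-point covariance estimate that enforces the threshold $\gamma>1/(2\ell)$ in the hypothesis of the theorem.
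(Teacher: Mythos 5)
Your outline (point process convergence from Theorem \ref{t:pp.weak.conv}, truncation at level $\vep a_n$, then residual control) is sound in spirit and not far from the paper's strategy, but the decisive technical step in case $(i)$ is not actually carried out: the off-diagonal part of $\mathrm{Var}(\Sigma_n^{\le\vep})$, i.e.\ the two-root covariance terms $\mathrm{Cov}\big(\inDt(0,u)\one\{\cdot\},\,\inDt(y,v)\one\{\cdot\}\big)$ arising through the Mecke formula, is only named as the ``main obstacle'' and deferred to an unproved ``two-root analogue'' of Proposition \ref{p:var.asym.specific}. That estimate is precisely where the bulk of the paper's proof of part $(i)$ lives: after the mark-based reduction, verifying \eqref{e:cond2.stable.limit.lighter} requires enumerating how two copies of $\ms{T}$ rooted at distinct Poisson points can share a vertex (the configurations in \eqref{e:cov.tree.structure}) and then performing case-by-case integral bounds (the quantities $A_n$, $B_n$, $D_n$, split according to $m\ge 1$ versus $m=0$ and $\ell\ge2$ versus $\ell=1$); it is these computations, not an abstract appeal to geometric independence, that yield bounds of order $M^{1-2\ell\gamma}$ and thus actually use $\gamma>1/(2\ell)$. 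Your value-based truncation makes this harder rather than easier, since the indicator $\one\{\inDt(P)\le\vep a_n\}$ depends on the whole configuration and not on the mark alone, so the Mecke integrands do not factorize as cleanly as they do for the paper's mark cutoff $U>M/n$.

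A second, smaller gap concerns the centering of the main part in case $(i)$: to pass from $a_n^{-1}\Sigma_n^{>\vep}\Rightarrow L^{(\vep)}$ to convergence of the centered quantity you need $a_n^{-1}\E[\Sigma_n^{>\vep}]\to\int_\vep^\infty x\,\kappa_{\ell\gamma}(\dif x)$, and convergence of expectations does not follow from the continuous mapping theorem; for the random functional $\inDt$ the truncated mean involves the joint law of the count and the truncation event, so a uniform-integrability or concentration argument is required and is missing. The paper avoids both difficulties by first proving Proposition \ref{p:stable.limit.uniform} for the deterministic mark functional $\mu(U)$, where the truncated mean is a direct Karamata computation, and then transferring to $\inDt$ by an $L^1$ comparison on low marks (\eqref{e:cond3.stable.limit.lighter}, via Proposition \ref{p:var.asym.specific}) together with variance and covariance bounds on the high-mark region. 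To complete your route you must either supply the two-root covariance estimate for the truncated sum and the truncated-mean convergence, or restructure the argument along the paper's mark-splitting decomposition.
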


The first step in proving Theorem \ref{t:stable.limit} is to establish analogous results on weak convergence for the version obtained by replacing $\inDt(P)$ in Theorem \ref{t:stable.limit} with $\mu(U)$, where $P = (X,U) \in \mP$.

\begin{proposition}  \label{p:stable.limit.uniform}
$(i)$ If $1/(2\ell) < \gamma < 1/\ell$, then 
$$
\frac{1}{a_n}\, \Big( \sum_{P=(X,U)\in \mP_n} \mu(U) - \E \Big[  \sum_{P=(X,U)\in \mP_n} \mu(U) \Big] \Big) \Rightarrow S_{1/(\ell\gamma)}. 
$$
(ii) If $\ell>1$ and $1/\ell<\gamma<1$, then 
$$
\frac{1}{a_n}  \sum_{P=(X,U)\in \mP_n} \mu(U)\Rightarrow \widetilde S_{1/(\ell\gamma)}. 
$$
\end{proposition}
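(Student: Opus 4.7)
The plan is to reduce the Poisson sum to a sum of i.i.d.~random variables, apply the classical stable limit theorem, and then perform a de-Poissonization step. Since $\mP_n$ is homogeneous of intensity $1$ on $\bbT_n$, conditional on $N:=|\mP_n|\sim \pois{n}$ the marks of the points of $\mP_n$ are i.i.d.~$\ms{Unif}(0,1)$ variables. Letting $U_1,U_2,\dots$ be i.i.d.~$\ms{Unif}(0,1)$ independent of $N$, we therefore have the distributional identity
$$
\sum_{P=(X,U)\in \mP_n} \mu(U) \,\eid\, S_N, \qquad S_m := \sum_{i=1}^m \mu(U_i).
$$
Setting $T_n := S_n$, it suffices to prove (a) that $(T_n - c_n)/a_n$ converges weakly to the appropriate stable limit, where $c_n = n\E[\mu(U_1)]$ in case $(i)$ and $c_n = 0$ in case $(ii)$; and (b) that $(S_N-T_n)/a_n \stackrel{p}{\to} 0$.

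Step (a) is a standard application of classical extreme-value theory. The tail of $\mu(U_1)$ is given by $\P(\mu(U_1)>y) = \muti(y)$, and by Proposition~\ref{p:exp.asym} combined with Proposition~2.6 of \cite{resnick:2007}, $\muti \in \ms{RV}_{-1/(\ell\gamma)}^{(\infty)}$. Hence $\mu(U_1)$ has a regularly varying right tail of index $\alpha := 1/(\ell\gamma)$, and by construction $a_n = \mu(1/n)$ satisfies $n\,\P(\mu(U_1)>a_n) \to 1$ up to a multiplicative constant. In case $(i)$, $\alpha\in(1,2)$ and $\ell\gamma<1$ makes $\E[\mu(U_1)]<\infty$ via Proposition \ref{p:exp.asym}; the classical stable limit theorem (see, e.g., Theorem~3.7.2 in \cite{embrechts:kluppelberg:mikosch:1997} or Chapter~4.5 of \cite{samorodnitsky:taqqu:1994}) then gives $(T_n - n\E[\mu(U_1)])/a_n \Rightarrow S_{\alpha}$ with characteristic function~\eqref{e:ch.f.stable}. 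In case $(ii)$, $\alpha\in(1/\ell,1)$, and the same theorem yields $T_n/a_n \Rightarrow \widetilde S_{\alpha}$, no centering being required.

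For step (b), since $N\sim \pois{n}$, one has $\E|N-n|=O(\sqrt n)$. In case $(i)$, $\mu(U_1)$ is integrable, and a Wald-type bound gives $\E|S_N-T_n| \le \E|N-n|\,\E[\mu(U_1)] = O(\sqrt n)$; since $\sqrt n/a_n \to 0$ by the regularly varying behavior of $a_n$ with exponent $\ell\gamma>1/2$, the desired convergence follows by Markov's inequality. In case $(ii)$, $\mu(U_1)$ is no longer integrable, but we condition on the high-probability event $\{|N-n|\le K\sqrt n\}$ for large $K$. On that event, $|S_N-T_n|$ is stochastically dominated by a sum of at most $2K\sqrt n$ i.i.d.~copies of $\mu(U_1)$, which by another application of the stable limit theorem is of order $a_{K\sqrt n}$; since $a_n \in \ms{RV}_{\ell\gamma}^{(\infty)}$ with $\ell\gamma>0$, the ratio $a_{K\sqrt n}/a_n \to 0$, yielding $(S_N-T_n)/a_n \stackrel{p}{\to}0$.

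The main obstacle is the de-Poissonization in case $(ii)$, where the lack of a finite mean for $\mu(U_1)$ precludes a direct moment argument; the remedy is to invoke the stable limit theorem also on the residual sum of length $O(\sqrt n)$ and to exploit the fact that $a_{\sqrt n}\ll a_n$ by regular variation.
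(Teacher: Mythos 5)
Your proposal is correct, but it follows a genuinely different route from the paper. The paper never leaves the point-process framework: it reuses the convergence $\sum_{P=(X,U)\in\mP_n}\delta_{a_n^{-1}\mu(U)}\Rightarrow \ms{PPP}(\kappa_{\ell\gamma})$ already established in the proof of Theorem \ref{t:pp.weak.conv}, applies the a.s.\ continuous truncation functional $T_\vep\big(\sum_\ell\delta_{z_\ell}\big)=\sum_\ell z_\ell\one\{z_\ell\ge\vep\}$ via the continuous mapping theorem, lets $\vep\to0$ using the series representation of the stable law, and controls the small-jump remainder $\sum\mu(U)\one\{\mu(U)\le\vep a_n\}$ by Chebyshev (case $(i)$) or Markov (case $(ii)$) combined with the Mecke formula and Karamata's theorem. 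You instead condition on $N=|\mP_n|\sim\pois{n}$, reduce to an i.i.d.\ sum $S_n=\sum_{i=1}^n\mu(U_i)$, invoke the classical stable limit theorem for regularly varying summands of index $1/(\ell\gamma)$, and de-Poissonize --- with the right extra care in case $(ii)$, where the infinite mean forces you to truncate on $\{|N-n|\le K\sqrt n\}$ and bound the residual by a sum of $O(\sqrt n)$ i.i.d.\ terms of order $a_{\sqrt n}\ll a_n$. Both arguments are sound; your centering $c_n=n\E[\mu(U_1)]$ agrees with the paper's $\E[\sum_{P\in\mP_n}\mu(U)]$ by the Mecke formula, and your Wald bound in case $(i)$ and the event-truncation trick in case $(ii)$ are valid. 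What the paper's approach buys is that the $T_\vep$-truncation structure is exactly what is leveraged again in the proof of Theorem \ref{t:stable.limit}, so no separate classical machinery is needed; what your approach buys is a more elementary, textbook-level argument. One small point to tighten: you say $n\P(\mu(U_1)>a_n)\to1$ ``up to a multiplicative constant,'' but to get precisely the limit law \eqref{e:ch.f.stable} (rather than a rescaling of it) you should note that, since $a_n=\mu(1/n)$ and $\mu$ is continuous and strictly decreasing near $0$, one has $n\P\big(\mu(U_1)>a_ny\big)= n\,\muti(a_ny)(1+o(1))\to y^{-1/(\ell\gamma)}$ exactly, so the limiting L\'evy measure is exactly $\kappa_{\ell\gamma}$ and no scale constant appears.
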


\begin{proof}[Proof of Proposition \ref{p:stable.limit.uniform} $(i)$]
Given $\vep>0$, let $T_\vep: M_p\big( (0,\infty] \big) \to \R_+$ be the map $T_\vep \big( \sum_\ell \delta_{z_\ell} \big) = \sum_\ell z_\ell\one\{ z_\ell\ge\vep \}$. Note that $T_\vep$ is almost surely continuous with respect to the probability law of $\ms{PPP}(\kappa_{\ell\gamma})$; see Section 7.2.3 of \cite{resnick:2007} for more details. As in the proof of Corollary \ref{cor:maxima}, we use $\sum_\ell\delta_{j_\ell}$ to represent $\ms{PPP}(\kappa_{\ell\gamma})$. 
It follows from \eqref{e:diff.de.Poissonization} and \eqref{e:de.Poissonization.weak.conv} that as $n\to\infty$, 
\begin{equation}  \label{e:weak.Poisson.point.U}
\sum_{P=(X,U)\in \mP _n} \delta_{a_n^{-1}\mu(U)} \Rightarrow \ms{PPP}(\kappa_{\ell\gamma}), \ \ \text{in } M_p\big( (0,\infty] \big). 
\end{equation}
Applying the continuous mapping theorem to  \eqref{e:weak.Poisson.point.U},  
\begin{equation}  \label{e:cont.mapping.thm}
\frac{1}{a_n}\sum_{P=(X,U)\in \mP_n}\mu (U)\one \{ \mu(U)\ge \vep a_n  \} \Rightarrow \sum_\ell j_\ell \one \{ j_\ell\ge\vep \}, \  \ \text{as } n\to\infty. 
\end{equation}
Since $\gamma< 1/\ell$, it also follows that 
$$
\frac{1}{a_n}\E\Big[ \sum_{P=(X,U)\in \mP_n}\mu (U)\one \{ \mu(U)\ge \vep a_n  \}\Big] \to \int_{\vep}^\infty x \kappa_{\ell\gamma}(\dif x), \ \ \ n\to\infty, 
$$
and thus, as $n\to\infty$, 
\begin{align*}
&\frac{1}{a_n} \bigg(\sum_{P=(X,U)\in \mP_n}\mu (U)\one \{ \mu(U)\ge \vep a_n \} -  \E\Big[ \sum_{P=(X,U)\in \mP_n}\mu (U)\one \{  \mu(U)\ge \vep a_n  \}\Big] \bigg) \\
&\qquad \qquad \Rightarrow \sum_\ell j_\ell \one \{ j_\ell\ge\vep \} - \int_{\vep}^\infty  x \kappa_{\ell \gamma}(\dif x). 
\end{align*}
Moreover, 
$$
\sum_\ell j_\ell \one \{ j_\ell\ge\vep \} - \int_{\vep}^\infty  x \kappa_{\ell \gamma}(\dif x) \Rightarrow S_{1/(\ell \gamma)}, \ \ \text{as } \vep \to 0; 
$$
see Section 5.5 in \cite{resnick:2007}. 
To finish the proof, we need to show that for every $\eta>0$, 
\begin{align*}
&\limsup_{\vep\to0} \limsup_{n\to\infty} \P \bigg( \, \bigg| \sum_{P=(X,U)\in \mP_n}\mu (U)\one \{ \mu(U)\le  \vep a_n \} \\
&\qquad \qquad\qquad \qquad -  \E\Big[ \sum_{P=(X,U)\in \mP_n}\mu (U)\one \{  \mu(U)\le \vep a_n  \}\Big] \bigg| > \eta a_n\bigg)=0
\end{align*}
	(see Theorem 3.5 in \cite{resnick:2007}). By Chebyshev's inequality and the Mecke formula for Poisson point processes, one can bound the last term by 
\begin{align*}
&\frac{1}{\eta^2}\, \limsup_{\vep\to0} \limsup_{n\to\infty} \frac{1}{a_n^2}\, \text{Var} \Big( \sum_{P=(X,U)\in \mP_n}\mu (U)\one \{ \mu(U)\le  \vep a_n \} \Big)  \\
&=\frac{1}{\eta^2}\, \limsup_{\vep\to0} \limsup_{n\to\infty} \frac{n}{a_n^2}\,\int_0^1 \mu(u)^2 \one \{ \mu(u)\le \vep a_n \}\dif u \\
&\le \frac{1}{\eta^2}\, \limsup_{\vep\to0} \limsup_{n\to\infty} \frac{n}{a_n^2}\,\int_0^{1/\muti(\vep a_n)} \mu (v^{-1})^2 v^{-2}\dif v. 
\end{align*}
Since $\mu(v^{-1})^2 v^{-2}\in \ms{RV}_{2\ell\gamma-2}^{(\infty)}$ with $2\ell\gamma-2>-1$,  Karamata's theorem gives that 
\begin{align*}
\frac{n}{a_n^2}\,\int_0^{1/\muti(\vep a_n)} \mu (v^{-1})^2 v^{-2}\dif v &\sim \frac{n}{a_n^2}\cdot \frac{1}{2\ell\gamma-1}\cdot \frac{1}{\muti(\vep a_n)}\, \mu \big( \muti(\vep a_n) \big)^2 \muti(\vep a_n)^2 \\
&\to \frac{\vep^{2-1/(\ell\gamma)}}{2\ell\gamma-1}, \ \ \ n\to\infty, \\
&\to 0, \ \  \text{as } \vep \to 0,  
\end{align*}
as desired. 
\end{proof}

\begin{proof}[Proof of Proposition \ref{p:stable.limit.uniform} $(ii)$]
Repeating the same argument, \eqref{e:cont.mapping.thm} follows as before. 
Since $\gamma>1/\ell$, it directly follows that 
$$
\sum_\ell j_\ell \one \{ j_\ell\ge\vep \}  \Rightarrow \widetilde S_{1/(\ell \gamma)}, \ \ \text{as } \vep \to  0; 
$$
see again Section 5.5 in \cite{resnick:2007}. 
To finish the proof, we only need to show that for every $\eta>0$, 
\begin{align*}
&\limsup_{\vep\to0} \limsup_{n\to\infty} \P \bigg( \sum_{P=(X,U)\in \mP_n}\mu (U)\one \{ \mu(U)\le  \vep a_n \}  > \eta a_n\bigg)=0. 
\end{align*}
By Markov's inequality and the Mecke formula, the last term above is upper bounded by 
\begin{align*}
&\frac{1}{\eta}\, \limsup_{\vep\to0} \limsup_{n\to\infty} \frac{1}{a_n}\, \E \Big[ \sum_{P=(X,U)\in \mP_n}\mu (U)\one \{ \mu(U)\le  \vep a_n \} \Big] \\
	&\le \frac{1}{\eta}\, \limsup_{\vep\to0} \limsup_{n\to\infty} \frac{n}{a_n}\,\int_0^{1/\muti(\vep a_n)} \mu (v^{-1}) v^{-2}\dif v. 
\end{align*}
Since $\mu(v^{-1}) v^{-2}\in \ms{RV}_{\ell\gamma-2}^{(\infty)}$ with $\ell\gamma-2>-1$, Karamata's theorem concludes that 
$$
\frac{n}{a_n}\,\int_0^{1/\muti(\vep a_n)} \mu (v^{-1}) v^{-2}\dif v 
\sim \frac{\vep^{1-1/(\ell\gamma)}}{\ell\gamma-1}, \ \ \text{as } n\to\infty. 
$$
The last term converges to $0$ as $\vep\to0$. 
\end{proof}

For the proof of Theorem \ref{t:stable.limit}, we start with statement $(ii)$. Henceforth, we put $\mP_{n, a}^{(\uparrow)} := \mP \cap ([0, n] \times [a, 1])$, and  $\mP_{n,  a}^{(\downarrow)}  := \mP \cap ([0, n] \times [0, a])$ for $a\in [0,1]$. 

\begin{proof}[Proof of Theorem \ref{t:stable.limit} $(ii)$]
By Proposition \ref{p:stable.limit.uniform} $(ii)$, it suffices to verify that 
\begin{align}
&\limsup_{M\to\infty}\limsup_{n\to\infty} \P \bigg( \sum_{P=(X,U)\in \mP_{n,  M/n}^{(\uparrow)}} \mu(U)  > \vep a_n \bigg)=0, \ \ \text{for every } \vep>0,  \label{e:cond1.stable.limit}  \\
&\limsup_{M\to\infty}\limsup_{n\to\infty} \P \bigg( \sum_{P=(X,U)\in \mP_{n,  M/n}^{(\uparrow)} } \inDt (P) > \vep a_n \bigg)=0, \ \ \text{for every } \vep>0,  \label{e:cond2.stable.limit} \\
&\limsup_{n\to\infty} \P \bigg( \, \Big|\sum_{P=(X,U)\in \mP_{n, M/n}^{(\downarrow)} } \hspace{-15pt}\big( \inDt (P) - \mu(U)\big) \Big| > \vep a_n \bigg)=0, \ \ \text{for every } \vep, M >0.  \label{e:cond3.stable.limit}
\end{align}
For the proof of \eqref{e:cond1.stable.limit}, it follows from Markov's inequality  that
\begin{align}
\begin{split}   \label{e:Karamata.cond1}
\P \bigg( \sum_{P=(X,U)\in \mP_{n,  M/n}^{(\uparrow)} } \mu(U)   > \vep a_n \bigg) 
&\le \frac{1}{\vep a_n}\, \E \Big[ \sum_{P=(X,U)\in \mP_{n,  M/n}^{(\uparrow)} } \mu(U) \Big] \\
&\le \frac{n}{\vep a_n}\int_0^{M^{-1}n}\mu(v^{-1})v^{-2}\dif v. 
\end{split}
\end{align} 
Since $\mu(v^{-1})v^{-2}\in \ms{RV}_{\ell\gamma-2}^{(\infty)}$ with $\ell\gamma -2>-1$, Karamata's theorem concludes  that as $n\to\infty$, 
$$
\frac{n}{a_n}\int_0^{M^{-1}n}\mu(v^{-1})v^{-2}\dif v \sim \frac{n}{a_n} \cdot \frac{M^{-1}n\mu(Mn^{-1})}{\ell\gamma-1}\cdot  \frac{1}{(M^{-1}n)^2} \to CM^{1-\ell\gamma}. 
$$
The last term vanishes as $M\to\infty$. 
\medskip

For the proof of  \eqref{e:cond2.stable.limit}.  Markov's inequality yields that 
\begin{align*}
\P \bigg( \sum_{P\in \mP_{n, M/n}^{(\uparrow)} } \inDt(P)   > \vep a_n \bigg) 
&\le \frac{1}{\vep a_n}\, \E \Big[ \sum_{P\in \mP_{n,  M/n}^{(\uparrow)} } \inDt(P) \Big] \\
&\le \frac{n}{\vep a_n}\int_0^{M^{-1}n}\mu(v^{-1})v^{-2}\dif v; 
\end{align*} 
we have  obtained the same bound as \eqref{e:Karamata.cond1}. 
\medskip

Finally, we turn our attention to \eqref{e:cond3.stable.limit}. We first have 
\begin{align*}
&\P \bigg(\sum_{P=(X,U)\in \mP_{n,  M/n}^{(\downarrow)} } \big|  \inDt(P)-\mu(U) \big| >\vep a_n \bigg) \\
&\le \P \Big( \bigcup_{P=(X,U)\in \mP_{n,  M/n}^{(\downarrow)} } \Big\{ \big| \inDt(P)-\mu(U) \big|>\eta a_n \Big\} \Big) \\
&+\P \bigg( \sum_{P=(X,U)\in \mP_{n,  M/n}^{(\downarrow)} } \big|  \inDt(P)-\mu(U) \big|\, \one \Big\{ \big|  \inDt(P)-\mu(U) \big|\le \eta a_n  \Big\} >\vep a_n \bigg) \\
&=:A_n+B_n, 
\end{align*}
where $0<\eta<\vep$. By Markov's inequality, 
\begin{align*}
	B_n &\le \frac{1}{\vep a_n}\, \E \bigg[ \sum_{P=(X,U)\in \mP_{n,  M/n}^{(\downarrow)} }\hspace{-.2cm} \big|  \inDt(P)-\mu(U) \big|\, \one \Big\{ \big|  \inDt(P)-\mu(U) \big|\le \eta a_n  \Big\} \bigg] \le \frac{n}{\vep a_n}\, \eta a_n Mn^{-1} = \frac{\eta M}{\vep}. 
\end{align*}
Since $\eta M/\vep\to0$ as $\eta\to0$, it suffices to demonstrate that $A_n\to0$ as $n\to\infty$ for every $\eta>0$. By the union bound, 
\begin{align*}
A_n &\le \E \bigg[ \sum_{P=(X,U)\in \mP_{n,  M/n}^{(\downarrow)} } \one \Big\{ \big| \inDt(P)-\mu(U) \big|>\eta a_n \Big\}\bigg]  \\
&= n\int_0^{Mn^{-1}} \P \Big( \big| \inDt(u)-\mu(u)  \big| >\eta a_n \Big) \dif u \\
&\le n\int_0^1 \P \Big( \big| \inDt(u)-\mu(u)  \big| >\eta a_n \Big) \dif u \to 0, \ \ \ n\to\infty, 
\end{align*}
where the last convergence was already shown by \eqref{e:A_n.conv} along with Chebyshev's inequality. 
\end{proof}

Before starting the proof, we note  that parts of the calculations are similar to those appearing in the Appendix. 

\begin{proof}[Proof of Theorem \ref{t:stable.limit} $(i)$]
By Proposition \ref{p:stable.limit.uniform} $(i)$, it suffices to verify that 
\begin{align}
&\limsup_{M\to\infty}\limsup_{n\to\infty} \frac{1}{a_n^2}\, \text{Var} \Big( \sum_{P=(X,U)\in \mP_{n,  M/n}^{(\uparrow)} } \mu(U)\Big)=0,  \label{e:cond1.stable.limit.lighter} \\
&\limsup_{M\to\infty}\limsup_{n\to\infty} \frac{1}{a_n^2}\, \text{Var} \Big( \sum_{P\in \mP_{n,  M/n}^{(\uparrow)}} \inDt(P)\Big)=0,  \label{e:cond2.stable.limit.lighter} \\
&\limsup_{n\to\infty}\frac{1}{a_n}\, \E \bigg[ \bigg| \sum_{P=(X,U)\in \mP_{n,  M/n}^{(\downarrow)}}  \big( \inDt(P) - \mu(U)\big)\bigg| \bigg] = 0, \ \ \text{for every } M>0. \label{e:cond3.stable.limit.lighter}
\end{align}
For the proof of \eqref{e:cond1.stable.limit.lighter}, it follows from Karamata's theorem that 
\begin{align*}
\frac{1}{a_n^2}\, \text{Var} \Big( \sum_{P\in \mP_{n,  M/n}^{(\uparrow)}} \mu(U)\Big) &= \frac{n}{a_n^2}\, \int_{Mn^{-1}}^1 \mu(u)^2 \dif u \\
&\le \frac{n}{a_n^2}\, \int_0^{M^{-1}n} \mu(v^{-1})^2 v^{-2}\dif v \\
&\sim \frac{n}{a_n^2}\cdot \frac{1}{2\ell\gamma-1}\cdot M^{-1}n \mu (M/n)^2 (M/n)^2 \\
&=\frac{M}{2\ell\gamma-1}\Big(\frac{\mu(M/n)}{a_n}  \Big)^2. 
\end{align*}
By Proposition \ref{p:exp.asym}, we have 
$$
\mu (M/n)\sim C (M/n)^{-\ell\gamma} (\log n -\log M)^m, \ \ \ n\to\infty. 
$$
Moreover, $a_n \sim Cn^{\ell\gamma}(\log n)^{m}$. Therefore, 
$$
\limsup_{M\to\infty}\limsup_{n\to\infty}\frac{M}{2\ell\gamma-1}\Big(\frac{\mu(M/n)}{a_n}  \Big)^2 = \limsup_{M\to\infty}\limsup_{n\to\infty} \frac{M^{1-2\ell\gamma}}{2\ell\gamma-1}\, \Big( 1-\frac{\log M}{\log n} \Big)^{2m}=0. 
$$

We next deal with \eqref{e:cond3.stable.limit.lighter}. The left hand side can be bounded by 
$$
\frac{n}{a_n}\,\int_0^{Mn^{-1}} \E \big[ \big| \inDt(u) - \mu(u) \big| \big]\dif u \le \frac{n}{a_n}\,\int_0^{Mn^{-1}}  \sqrt{\text{Var}(\inDt(u))}\dif u. 
$$
First, if $m\ge1$, it follows from Proposition \ref{p:var.asym.specific} that 
\begin{align*}
\frac{n}{a_n}\,\int_0^{Mn^{-1}}  \sqrt{\text{Var}(\inDt(u))}\dif u &\le \frac{Cn}{a_n}\int_0^{Mn^{-1}} u^{-\ell\gamma} (\log u^{-1})^{m-1/2} \dif u \\
&= \frac{Cn}{a_n}\int_{M^{-1}n}^\infty v^{\ell\gamma-2} (\log v)^{m-1/2} \dif v. 
\end{align*}
Since $\gamma<1/\ell$ and $m\ge1$, the last term is asymptotically equal to 
\begin{align*}
\frac{Cn}{a_n}\cdot \frac{(M^{-1}n)^{\ell\gamma-1}}{1-\ell\gamma}\, \big(\log (M^{-1}n)\big)^{m-1/2} \sim CM^{1-\ell\gamma} \frac{(\log n -\log M)^{m-1/2}}{(\log n)^m} \to 0, \ \ \ n\to\infty. 
\end{align*}
If $m=0$, Proposition \ref{p:var.asym.specific} gives 
$$
\frac{n}{a_n}\,\int_0^{Mn^{-1}}  \sqrt{\text{Var}(\inDt(u))}\dif u \le \frac{Cn}{a_n}\int_0^{Mn^{-1}} u^{-\ell\gamma+\gamma/2}\dif u = C M^{1-\ell\gamma+\gamma/2}n^{-\gamma/2} \to 0, \ \ \ n\to\infty. 
$$

Finally, we turn our attention to \eqref{e:cond2.stable.limit.lighter}. Appealing to the explicit formula on page 217 in \cite{barysh}, which is based on the Mecke formula, for the variance of a sum of score functions, one can see that 
\begin{align}
\begin{split}  \label{e:3rd.cond.cov}
&\frac{1}{a_n^2}\, \text{Var} \Big( \sum_{P\in \mP_{n,  M/n}^{(\uparrow)}} \inDt(P)\Big) =\frac{n}{a_n^2}\int_{Mn^{-1}}^1 \E\big[\inDt(u)^2\big]\dif u \\
&\quad + \frac{n}{a_n^2} \int_0^n \int_{Mn^{-1}}^1 \int_{Mn^{-1}}^1 \text{Cov} \big( \inDt (0,u), \inDt (y,v)\big) \dif v \dif u \dif y + R_n, 
\end{split}
\end{align}
where $R_n$ is a remainder term that tends to $0$ as $n\to\infty$ more rapidly than the other terms. Since the required negligibility of $R_n$ can be demonstrated similarly to the proof in \cite[Lemma 10]{hj23}, we skip the technical details here.

We work with the first term in \eqref{e:3rd.cond.cov}. If $m\ge1$,  Propositions \ref{p:exp.asym} and \ref{p:var.asym.specific}  yield that 
\begin{align*}
\E\big[\inDt(u)^2\big] &= \text{Var}\big( \inDt(u) \big) + \big(\E[\inDt(u)]\big)^2  \\
&\le C \Big\{ u^{-2\ell\gamma} (\log u^{-1})^{2m-1} + \Big( u^{-\ell\gamma}(\log u^{-1})^m \Big)^2 \Big\} \\
&\le C u^{-2\ell\gamma} (\log u^{-1})^{2m}. 
\end{align*}
Since $2\ell\gamma-2>-1$, one can see that 
\begin{align*}
\frac{n}{a_n^2}\int_{Mn^{-1}}^1 \E\big[\inDt(u)^2\big]\dif u &\le \frac{Cn}{a_n^2}\int_0^{M^{-1}n} v^{2\ell\gamma-2} (\log v)^{2m} \dif v \\
&\sim \frac{Cn}{a_n^2} \cdot \frac{(M^{-1}n)^{2\ell\gamma-1}}{2\ell\gamma-1}\, (\log (M^{-1}n))^{2m} \\
&\le CM^{1-2\ell\gamma} \Big( \frac{\log n -\log M}{\log n} \Big)^{2m} \\
&\to CM^{1-2\ell\gamma}, \ \ \text{as } n\to\infty. 
\end{align*}
The last term converges to $0$ as $M\to\infty$. 
If $m=0$, we obtain from Propositions \ref{p:exp.asym} and \ref{p:var.asym.specific} that 
$$
\E\big[\inDt(u)^2\big] = \text{Var}\big( \inDt(u) \big) + \big(\E[\inDt(u)]\big)^2 \le Cu^{-2\ell\gamma}. 
$$
Substituting this bound, it is easy to see that 
$$
\frac{n}{a_n^2}\int_{Mn^{-1}}^1 \E\big[\inDt(u)^2\big]\dif u  \le CM^{1-2\ell\gamma} \to 0, \ \ \text{as } M\to\infty. 
$$

Regarding the second term in \eqref{e:3rd.cond.cov}, we express $\inDt(\cdot)$ as in \eqref{e:expression.Din} of the Appendix and use the Mecke formula,  to derive that 
\begin{align}
\begin{split}  \label{e:Mecke.to.covariance}
&\int_0^n \int_{Mn^{-1}}^1 \int_{Mn^{-1}}^1 \text{Cov} \big( \inDt (0,u), \inDt (y,v)\big) \dif u \dif v \dif y  \\
&= \int_0^n \int_{Mn^{-1}}^1 \int_{Mn^{-1}}^1 \E \bigg[ \sum_{(P_1, \dots, P_a)\in \mP_{\neq}^a}\sum_{\substack{(Q_1,\dots,Q_a)\in \mP_{\neq}^a, \\ |(P_1,\dots,P_a)\cap (Q_1,\dots,Q_a)|\ge 1}} \hspace{-10pt}h\big( (0,u),P_1,\dots,P_a \big)\, \\
&\qquad \qquad\qquad\qquad\qquad\qquad\qquad\qquad\times h\big( (y,v),Q_1,\dots,Q_a \big)  \bigg]  \dif u \dif v \dif y, 
\end{split}
\end{align}
where  $h$ is an indicator function defined at \eqref{e:expression.Din} and $\mP_{\neq}^a$ is also defined there. For the same reasoning as in the proof of Proposition \ref{p:var.asym.specific} in the Appendix, we discuss only the case $\big|(P_1,\dots,P_a)\cap (Q_1,\dots,Q_a)\big|=1$. 

Now, we recall that the directed tree $\ms{T}$ determined by $h$ contains $\ell$ leaves with  no further children. In the following, we separate our argument into two cases, $(i)$ $m\ge1$ and $(ii)$ $m=0$. \\
\medskip

\noindent {\bfseries Case $(i)$:} $m\ge1$. \\
Let $\ms{T}_1$ and $\ms{T}_2$ be two copies of $\ms{T}$ such that $\ms{T}_i \cong \ms{T}$ for $i=1,2$, meaning that each $\ms{T}_i$ is graph isomorphic to $\ms{T}$. By selecting a point from each of the $\ms{T}_i$ and identifying them together, we find that the product of two indicators in \eqref{e:Mecke.to.covariance} implies the presence of the following configuration of nodes and edges:
\begin{align}
\begin{split}  \label{e:cov.tree.structure}
&(p_1,q_1) \to (p_2,q_2) \to \cdots \to (p_s, q_s) \to (z_0,w_0), \\
&(p_1',q_1') \to (p_2',q_2') \to \cdots \to (p_t', q_t') \to (z_0,w_0), \\
&(z_0,w_0) \to (z_1,w_1) \to \cdots \to (z_d,w_d) \to (0,u), \\
&(z_0,w_0) \to (z_1',w_1') \to \cdots \to (z_e',w_e') \to (y,v),
\end{split}
\end{align}
for some $s, t, d, e \ge 0$. In the special case  $s=0$, the sequence of edges in the first line of \eqref{e:cov.tree.structure} is absent; similarly, if $t=0$, the sequence in the second line is absent. Likewise, if $d=0$ (resp.~$e=0$), then $(z_0, w_0)$ is directly adjacent to $(0,u)$ (resp.~$(y,v)$) without any intermediate vertices. In \eqref{e:cov.tree.structure}, $(z_0,w_0)$ corresponds to a common node between $\ms{T}_1$ and $\ms{T}_2$. Below, we focus on the case where at least one of the chosen vertices is selected from a path containing at least one intermediate node between the leaf and the root. Since $m \ge 1$, it is always possible to select two nodes in this manner. Note that the remaining case, where both chosen vertices are taken from the shortest possible path consisting only of the leaf and the root, will be addressed in Case $(ii)$. 

To further clarify the setup in \eqref{e:cov.tree.structure}, we assume, without loss of generality, that the path consisting of the first and third lines of \eqref{e:cov.tree.structure} is taken from $\ms{T}_1$, while the other path, consisting of the second and fourth lines, is taken from $\ms{T}_2$. 

Our discussion is now divided into three cases: $(i\text{-}1)$ $s \wedge t \ge 1$, $(i\text{-}2)$ $s \wedge t = 0$ and $s \vee t \ge 1$,  $(i\text{-}3)$ $s=t=0$. Below, we will address only Cases $(i\text{-}1)$ and $(i\text{-}3)$, and omit the intermediate Case $(i\text{-}2)$. We note that in Case $(i\text{-}3)$, either $d$ or $e$ must be positive, whereas both $d$ and $e$ could be zero in Cases $(i\text{-}1)$ and $(i\text{-}2)$.
\vspace{5pt}

\noindent {\bfseries Case  $(i\text{-}1)$:} $s\wedge t \ge1$. \\
We begin by offering an elementary  bound:
\begin{equation}  \label{e:elementary.bdd}
\int_{a_1}^{a_2} x^{a_3} (\log x^{-1})^{a_4} \dif x \le \begin{cases}
C a_1^{1+a_3} (\log a_1^{-1})^{a_4} & \text{ if } a_3 < -1, \\[5pt]
C(\log a_1^{-1})^{a_4 +1} & \text{ if } a_3=-1. 
\end{cases}
\end{equation}
Repeating calculations identical to those in the proof of Proposition \ref{p:exp.asym} in the Appendix, particularly by integrating over all coordinates except those in the third and fourth lines in  \eqref{e:cov.tree.structure}, it is sufficient to provide an upper bound for the following integral. 
\begin{align}
\begin{split}  \label{e:An.heavy}
A_n &:= \int_{u=Mn^{-1}}^1 \int_{v=Mn^{-1}}^1 \int_{w_d=u}^1 \int_{w_{d-1}=w_d}^1 \cdots \int_{w_1=w_2}^1 \int_{w_e'=v}^1 \int_{w_{e-1}'=w_e'}^1 \cdots \int_{w_1'=w_2'}^1 \int_{w_0=w_1\vee w_1'}^1 \\
&\qquad u^{-(\ell-1)\gamma} (\log u^{-1})^{m-(s+d)} v^{-(\ell-1)\gamma} (\log v^{-1})^{m-(t+e)} w_0^{-2\gamma} (\log w_0^{-1})^{s+t-2} \\
&\quad \times \int_{y=0}^n \prod_{i=1}^d \int_{z_i\in \R} \prod_{j=1}^e \int_{z_j'\in \R} \int_{z_0\in \R} \one \big\{ (z_0,w_0) \to (z_1,w_1) \to \cdots \to (z_d,w_d) \to (0,u) \big\} \\
&\qquad \qquad\qquad\qquad\qquad\qquad\qquad \quad  \times \one \big\{ (z_0,w_0) \to (z_1',w_1') \to \cdots \to (z_e',w_e') \to (y,v) \big\}. 
\end{split}
\end{align}
By extending the integral over $y \in [0, n]$ to $y \in \R$ and subsequently integrating over all corresponding coordinates, we observe that the expression from the third to the fourth line above satisfies
\begin{align*}
&\int_{y=0}^n \prod_{i=1}^d \int_{z_i\in \R} \prod_{j=1}^e \int_{z_j'\in \R} \int_{z_0\in \R} \one \big\{ (z_0,w_0) \to (z_1,w_1) \to \cdots \to (z_d,w_d) \to (0,u) \big\} \\
&\qquad \qquad\qquad\qquad\qquad\qquad\qquad \quad  \times \one \big\{ (z_0,w_0) \to (z_1',w_1') \to \cdots \to (z_e',w_e') \to (y,v) \big\} \\
&\le  C u^{-\gamma} v^{-\gamma}\Big( \prod_{i=1}^d w_i^{-1}\Big) \Big\{ \prod_{j=1}^e (w_j')^{-1}\Big\} w_0^{2\gamma-2}. 
\end{align*}
Referring this bound back to $A_n$, 
\begin{align*}
A_n &\le C \int_{u=Mn^{-1}}^1 u^{-\ell\gamma} (\log u^{-1})^{m-(s+d)} \int_{v=Mn^{-1}}^1 v^{-\ell\gamma} (\log v^{-1})^{m-(t+e)} \\
&\quad \times  \int_{w_d=u}^1 w_d^{-1} \int_{w_{d-1}=w_d}^1 w_{d-1}^{-1} \cdots \int_{w_1=w_2}^1 w_1^{-1}  \\
&\quad \times \int_{w_e'=v}^1 (w_e')^{-1} \int_{w_{e-1}'=w_e'}^1 (w_{e-1}')^{-1} \cdots \int_{w_1'=w_2'}^1 (w_1')^{-1} \int_{w_0=w_1\vee w_1'}^1 w_0^{-2} (\log w_0^{-1})^{s+t-2}.  
\end{align*}
By \eqref{e:elementary.bdd}, the innermost  integral can be bounded as follows: 
$$
\int_{w_0=w_1\vee w_1'}^1 w_0^{-2} (\log w_0^{-1})^{s+t-2} \le \int_{w_0=u\vee v}^1 w_0^{-2} (\log w_0^{-1})^{s+t-2} \le C(u\vee v)^{-1} \big( \log (u\vee v)^{-1} \big)^{s+t-2}. 
$$
Substituting this bound and appealing to the proof of Lemma \ref{l:sequential.integral} in the Appendix repeatedly, 
$$
A_n \le C \int_{u=Mn^{-1}}^1 \int_{v=Mn^{-1}}^1 u^{-\ell\gamma}v^{-\ell\gamma} (u \vee v)^{-1} (\log u^{-1})^{m-s} (\log v^{-1})^{m-t} \big( \log (u\vee v)^{-1} \big)^{s+t-2}. 
$$
If $v\ge u$, it follows from \eqref{e:elementary.bdd} with $\gamma>1/(2\ell)$ that 
\begin{align*}
&\int_{u=Mn^{-1}}^1 \int_{v=u}^1 u^{-\ell\gamma} v^{-1-\ell\gamma} (\log u^{-1})^{m-s} (\log v^{-1})^{m+s-2} \\
&\le C \int_{u=Mn^{-1}}^1 u^{-2\ell\gamma} (\log u^{-1})^{2m-2} \le C M^{1-2\ell\gamma} n^{2\ell\gamma-1} (\log n -\log M)^{2m-2}. 
\end{align*}
One can easily obtain the same bound even when $u\ge v$, and therefore, 
$$
A_n \le C M^{1-2\ell\gamma} n^{2\ell\gamma-1} (\log n -\log M)^{2m-2}. 
$$
We know that as $n\to\infty$, 
\begin{equation}  \label{e:n/a_n^2}
\frac{n}{a_n^2} \sim \frac{Cn}{\big( n^{-\ell\gamma} (\log n)^m \big)^2}, 
\end{equation}
and hence, $nA_n/a_n^2 \le CM^{1-2\ell\gamma}/(\log n)^2 \to 0$ as $n\to\infty$. 
\vspace{5pt}

\noindent {\bfseries Case $(i\text{-}3)$:} $s=t=0$. \\
Analogous to \eqref{e:An.heavy}, we have to upper bound the following integral. 
\begin{align}
\begin{split}  \label{e:Bn.heavy}
B_n &:= \int_{u=Mn^{-1}}^1 \int_{v=Mn^{-1}}^1 \int_{w_d=u}^1 \int_{w_{d-1}=w_d}^1 \cdots \int_{w_1=w_2}^1 \int_{w_e'=v}^1 \int_{w_{e-1}'=w_e'}^1 \cdots \int_{w_1'=w_2'}^1 \int_{w_0=w_1\vee w_1'}^1 \\
&\qquad u^{-(\ell-1)\gamma} (\log u^{-1})^{m-d} v^{-(\ell-1)\gamma} (\log v^{-1})^{m-e} \\
&\quad \times \int_{y=0}^n \prod_{i=1}^d \int_{z_i\in \R} \prod_{j=1}^e \int_{z_j'\in \R} \int_{z_0\in \R} \one \big\{ (z_0,w_0) \to (z_1,w_1) \to \cdots \to (z_d,w_d) \to (0,u) \big\} \\
&\qquad \qquad\qquad\qquad\qquad\qquad\qquad \quad  \times \one \big\{ (z_0,w_0) \to (z_1',w_1') \to \cdots \to (z_e',w_e') \to (y,v) \big\} \\
&\le C \int_{u=Mn^{-1}}^1 u^{-\ell\gamma} (\log u^{-1})^{m-d} \int_{v=Mn^{-1}}^1 v^{-\ell\gamma} (\log v^{-1})^{m-e} \\
&\quad \times  \int_{w_d=u}^1 w_d^{-1} \int_{w_{d-1}=w_d}^1 w_{d-1}^{-1} \cdots \int_{w_1=w_2}^1 w_1^{-1}  \\
&\quad \times \int_{w_e'=v}^1 (w_e')^{-1} \int_{w_{e-1}'=w_e'}^1 (w_{e-1}')^{-1} \cdots \int_{w_1'=w_2'}^1 (w_1')^{-1} \int_{w_0=w_1\vee w_1'}^1 w_0^{2\gamma-2}. 
\end{split}
\end{align}
It is evident that 
\begin{equation} \label{e:inner.bound5}
\int_{w_0=w_1\vee w_1'}^1 w_0^{2\gamma-2} \le \int_{w_0=u\vee v}^1 w_0^{2\gamma-2} \le \begin{cases}
C(u\vee v )^{2\gamma-1} & \text{ if } \ell\ge2, \\
C & \text{ if } \ell=1. 
\end{cases}
\end{equation}
Suppose first $\ell\ge2$. Then, by  using \eqref{e:inner.bound5}, 
\begin{align*}
B_n &\le C\int_{u=Mn^{-1}}^1 \int_{v=Mn^{-1}}^1 u^{-\ell\gamma} v^{-\ell\gamma} (u\vee v)^{2\gamma-1} (\log u^{-1})^m (\log v^{-1})^m \\
&= 2C \int_{u=Mn^{-1}}^1 u^{-\ell\gamma}  (\log u^{-1})^m \int_{v=u}^1 v^{-1-(\ell-2)\gamma} (\log v^{-1})^m \\
&\le 2C \int_{u=Mn^{-1}}^1 u^{-(2\ell-2)\gamma} (\log u^{-1})^{2m+1} \le CM^{1-2\ell\gamma} n^{2\ell\gamma-1}. 
\end{align*}
Because of \eqref{e:n/a_n^2} and the assumption $m\ge1$, we have 
$$
\frac{nB_n}{a_n^2} \le \frac{CM^{1-2\ell\gamma}}{(\log n)^{2m}} \to 0, \ \ \text{as } n\to\infty. 
$$
Suppose next  $\ell=1$; then 
$$
B_n \le C \Big\{ \int_{Mn^{-1}}^1 u^{-\gamma} (\log u^{-1})^m \dif u \Big\}^2 \le C, 
$$
in which case, it follows that $nB_n/a_n^2 \le Cn^{1-2\ell\gamma} \to 0$ as $n\to\infty$. 
Now, our argument for Case $(i)$ has been completed and we shall move to  Case $(ii)$. 
\vspace{5pt}

\noindent {\bfseries Case $(ii)$:} $m=0$. \\
According to the indicator in \eqref{e:Mecke.to.covariance}, it is enough to consider the following configuration of nodes and edges. 
\begin{align*}
(z_0, w_0) \to (0,u), \ \ \ (z_0,w_0) \to (y,v), \\
(z_i, w_i) \to \begin{cases}
(0,u),  & \ i=1,\dots,\ell-1, \\
(y,v), & \ i=\ell, \dots, 2\ell-2. 
\end{cases}
\end{align*}
As an analogue of \eqref{e:An.heavy} and \eqref{e:Bn.heavy}, we need to upper bound the integral: 
\begin{align*}
D_n &:= \int_{u=Mn^{-1}}^1 \int_{v=Mn^{-1}}^1 \prod_{j=\ell}^{2\ell-2} \int_{w_j=v}^1 \prod_{i=1}^{\ell-1} \int_{w_i=u}^1 \int_{w_0=u\vee v}^1 \\
&\quad \times \int_{y=0}^n \prod_{i=0}^{2\ell-2} \int_{z_i\in \R} \one \big\{  (z_0,w_0) \to (0,u), \ (z_0,w_0)\to (y,v), \\
&\qquad \qquad (z_i,w_i) \to (0,u), \, i=1,\dots,\ell-1, \ (z_j, w_j) \to (y,v), \, j=\ell, \dots, 2\ell-2 \big\}  \\
&\le  C \int_{u=Mn^{-1}}^1 u^{-\ell\gamma} \int_{v=Mn^{-1}}^1 v^{-\ell\gamma} \prod_{j=\ell}^{2\ell-2} \int_{w_j=v}^1 w_j^{\gamma-1} \prod_{i=1}^{\ell-1} \int_{w_i=u}^1 w_i^{\gamma-1} \int_{w_0=u\vee v}^1 w_0^{2\gamma-2} \\
&\le C \int_{u=Mn^{-1}}^1 u^{-\ell\gamma} \int_{v=Mn^{-1}}^1 v^{-\ell\gamma} \int_{w_0=u\vee v}^1 w_0^{2\gamma-2}. 
\end{align*}
If $\ell\ge2$, we apply the first bound in \eqref{e:inner.bound5} to obtain that 
\begin{align*}
D_n &\le C\int_{u=Mn^{-1}}^1 u^{-(2\ell-2)\gamma} \log u^{-1} \\
&\le  C\int_{u=Mn^{-1}}^1 u^{-(2\ell-1)\gamma} \log u^{-1}
\le \begin{cases}
C (Mn^{-1})^{1-(2\ell-1)\gamma} & \text{ if } 1/(2\ell-1) \le \gamma < 1/\ell, \\
C & \text{ if } 1/(2\ell) < \gamma < 1/(2\ell-1). 
\end{cases}
\end{align*}
In either case, it is easy to show that $nD_n/a_n^2\to 0$ as $n\to\infty$. If $\ell=1$,  the second bound in \eqref{e:inner.bound5} is applied to conclude  that $D_n \le C$, and thus, $nD_n/a_n^2 \le Cn^{1-2\ell\gamma}\to 0$ as $n\to\infty$. 
\end{proof}
\medskip

%
%
\section{Limit theorems for clique counts}
\label{sec:clique}

In this section, we aim to establish various limit theorems for the statistics associated with cliques in the $\ms{AD}(\beta,\gamma)$. For a given $m \ge 2$, we consider the number of $m$-dimensional cliques (abbreviated as $m$-cliques) whose vertex with the lowest mark lies in $\bbT_n$, $n \ge 1$. For $(x,u) \in \bbT$, let $\inCc(x,u)$ denote the number of $m$-cliques in which $(x,u)$ is the vertex with the lowest mark. More precisely, analogous to the definition of $\inDt(x,u)$ in Section \ref{sec:tree}, $\inCc(x,u)$ represents the number of $(m-1)$-tuples $(P_1, \dots, P_{m-1}) \in \mP_{\neq}^{m-1}$ that form an $(m-1)$-clique, with the additional property that $P_i \to (x,u)$ for all $i = 1, \dots, m-1$. In other words, $(P_1, \dots, P_{m-1})$ forms an $m$-clique, together with the lowest mark vertex $(x,u)$.  If $x=0$, we write $\inCc(u) = \inCc(0,u)$. 

The count of $m$-cliques of interest is then defined by
\begin{equation}  \label{e:def.clique.counts}
\sum_{P \in \mP _n} \inCc (P), \ \ \ n \ge 1,  
\end{equation}
and the associated point process is given by
\begin{equation}  \label{e:def.clique.pp}
\sum_{P \in \mP _n}  \delta_{b_n^{-1} \inCc (P)}, \ \ \ n \ge 1, 
\end{equation}
where $(b_n)_{n \ge 1}$ are scaling constants that will be defined later. 
Analogous to the previous section, the nature of the limit theorems for the clique counts \eqref{e:def.clique.counts} is critically influenced by the value of $\gamma$. Specifically, stable limit theorems are observed only when $1/2 < \gamma < 1$, whereas \eqref{e:def.clique.counts} follows a CLT when $0 < \gamma < 1/2$, as will be shown in the companion work \cite{aa}. 

Below, Proposition \ref{p:exp.var.asym.clique} provides the asymptotics of the expectation 
$$
\nu(u) := \E\big[\inCc(x,u)\big], \ \ \ (x,u) \in \bbT,
$$
as well as an upper bound for the variance of $\inCc(x,u)$. The proof of Proposition \ref{p:exp.var.asym.clique} is presented in the Appendix. Notably, a $2$-clique corresponds to an edge, which can be viewed as  a special case of a tree. Since the asymptotics for sub-tree counts  were already examined in the previous section, we will focus exclusively on the case $m \ge 3$. 

\begin{proposition}  \label{p:exp.var.asym.clique}
$(i)$ It holds that as $u\downarrow0$, 
$$
\nu(u) \sim C_{\beta, \gamma} u^{-\gamma}, 
$$
where 
\begin{align*}
C_{\beta,\gamma} &:= \frac{2\beta}{\gamma}\prod_{i=1}^{m-2} \int_{w_i=0}^1 \prod_{j=1}^{m-2} \int_{z_j\in \R} \one \big\{ |z_k| \le \beta w_k^{-\gamma}, \, k=1,\dots,m-2\big\} \\
&\qquad \qquad \qquad \times \one \big\{ |z_k-z_\ell| \le \beta (w_k \wedge w_\ell)^{-\gamma} (w_k \vee w_\ell)^{\gamma-1}, \, k, \ell \in \{ 1,\dots,m-2 \}  \big\}
\end{align*}
$(ii)$ There exists a finite constant $C>0$, such that 
$$
\text{Var}\big( \inCc(u) \big) \le C u^{1-3\gamma} (\log u^{-1})^{m-3}, \ \ \ u\in (0,1). 
$$
\end{proposition}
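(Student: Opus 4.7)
For part $(i)$, I apply the Mecke formula to express
\[
\nu(u)=\int_{([u,1]\times\R)^{m-1}}\prod_{i=1}^{m-1}\one\{|X_i|\le \beta u^{-\gamma}U_i^{\gamma-1}\}\prod_{1\le i<j\le m-1}\one\{|X_i-X_j|\le \beta(U_i\wedge U_j)^{-\gamma}(U_i\vee U_j)^{\gamma-1}\}\prod dX_i\,dU_i,
\]
and single out the vertex with the largest mark, $(X_{m-1},U_{m-1})$; a factor of $m-1$ arises from the symmetric choice of the distinguished index. Performing the substitutions $w_i=U_i/U_{m-1}$ and $\tilde z_i=U_{m-1}(X_i-X_{m-1})$ for $i=1,\dots,m-2$, one checks that the mutual-connection constraints among the remaining $m-2$ vertices become
\[
|\tilde z_i-\tilde z_j|\le \beta(w_i\wedge w_j)^{-\gamma}(w_i\vee w_j)^{\gamma-1},
\]
the constraints linking them to the distinguished vertex become $|\tilde z_i|\le \beta w_i^{-\gamma}$, and the Jacobian collapses to $\prod dw_i\,d\tilde z_i$. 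Because the new coordinates $\tilde z_i$ are automatically bounded by these constraints, the $(0,u)$-connection constraints on $X_i$ for $i\le m-2$ are, in the limit $u\to 0$, implied by the single constraint $|X_{m-1}|\le \beta u^{-\gamma}U_{m-1}^{\gamma-1}$ on the distinguished vertex. Integrating out $X_{m-1}$ thus yields $2\beta u^{-\gamma}U_{m-1}^{\gamma-1}(1+o(1))$, and the further integration $U_{m-1}\in[u,1]$ produces the prefactor $(2\beta/\gamma)u^{-\gamma}(1+o(1))$. The surviving $(m-2)$-fold integral over $(w_i,\tilde z_i)$ then converges, by dominated convergence, to the constant appearing in the definition of $C_{\beta,\gamma}$.

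For part $(ii)$, I use the Hoeffding-type variance decomposition of the Poisson $U$-statistic $\inCc(u)$,
\[
\text{Var}(\inCc(u))=\sum_{s=1}^{m-1}s!\binom{m-1}{s}^{2}\int \tilde h_s^{2}\,d\mu^{s},
\]
where $\tilde h_s(x_1,\dots,x_s):=\int h_u(x_1,\dots,x_{m-1})\,d\mu^{m-1-s}$ is the $s$-variate marginal of the clique-indicator kernel $h_u$ determined by $\inCc$. I would bound each $\|\tilde h_s\|^{2}$ separately, reusing the change-of-variables argument from part $(i)$ together with iterative integration lemmas analogous to those employed in the proof of Proposition~\ref{p:var.asym.specific}. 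The dominant contribution, which produces the exponent $1-3\gamma$, is expected to come from a specific overlap configuration between the two $(m-1)$-tuples implicit in $\int \tilde h_s^{2}d\mu^s$; for $m\ge 4$, the logarithmic factor $(\log u^{-1})^{m-3}$ arises from iterated integration over the $m-3$ ``intermediate'' vertices shared between the two cliques.

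The principal obstacle lies in part $(ii)$: the combinatorial structure of the variance decomposition, together with the delicate estimation required to produce the exponent $1-3\gamma$ (substantially tighter than what one would obtain from a crude $L^{2}$-bound on $\tilde h_1$), calls for careful case analysis and meticulous bookkeeping of the logarithmic factors accumulated through iterated integration.
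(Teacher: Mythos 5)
Your overall route coincides with the paper's (single out the highest-mark vertex and rescale by its mark for part $(i)$; expand the variance over overlap sizes for part $(ii)$ — your Reitzner--Schulte $U$-statistic decomposition is the same expansion the paper produces with the Mecke formula), but in both parts the step that carries the actual difficulty is missing. In $(i)$, the claim that the $(0,u)$-connection constraints on the non-distinguished vertices are ``implied'' by the single constraint on the top-mark vertex is false as stated. From $|X_i-X_{m-1}|\le\beta U_i^{-\gamma}U_{m-1}^{\gamma-1}$ and $|X_{m-1}|\le\beta u^{-\gamma}U_{m-1}^{\gamma-1}$ one only gets $|X_i|\le\beta u^{-\gamma}U_{m-1}^{\gamma-1}+\beta U_i^{-\gamma}U_{m-1}^{\gamma-1}$, which can exceed the required $\beta u^{-\gamma}U_i^{\gamma-1}$ when $X_{m-1}$ lies near the edge of its admissible interval or when $U_i$ is of order $u$ (so that $U_i^{-\gamma}$ competes with $u^{-\gamma}$). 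What has to be proved is that the contribution of configurations in which some $(z_k,w_k)\nrightarrow(0,u)$ is $o(u^{-\gamma})$; the paper does this by a triangle-inequality dichotomy at the threshold $\vep_u=(\log u^{-1})^{-1}$ — either $|y|\ge(1-\vep_u)\beta u^{-\gamma}v^{\gamma-1}$, or $|z_k-y|\ge\vep_u\beta u^{-\gamma}v^{\gamma-1}$, which forces $w_k\le\vep_u^{-1/\gamma}u$ — followed by separate integral estimates in the two regimes. Nothing in your write-up supplies this, and without it the factorization into $(2\beta/\gamma)u^{-\gamma}$ times the constant $C_{\beta,\gamma}$ is unjustified; the only rigorous content of your part $(i)$ is the change of variables, which is the easy half.

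For part $(ii)$ you give a plan rather than a proof: the decomposition is legitimate, but the entire content of the statement is the bound $Cu^{1-3\gamma}(\log u^{-1})^{m-3}$ for each overlap term, and you explicitly defer exactly this. The paper obtains it by fixing the highest-mark common vertex $(z,w)$ of the two cliques and splitting according to whether the two highest-mark non-common vertices both have marks above $w$ (i.e.\ $v_1\wedge w_1>w$) or not. In the first case an explicit chain configuration of edges is integrated out, the factor $u^{1-3\gamma}$ arising from $u^{-\gamma}\int_u^1 w^{-2\gamma}\dif w$ and the factor $(\log u^{-1})^{m-3}$ from $m-3$ nested integrals of $v^{-1}$; in the second case the relevant terms are bounded by $\E\big[\sum_{P\in\mP\cap N^{\uparrow}(0,u)}\mP\big(N^{\downarrow}(P)\big)^{2m-q-3}\big]\le Cu^{-\gamma}$, using that $\mP\big(N^{\downarrow}(y,v)\big)$ is Poisson with mean $2\beta/(1-\gamma)$ independent of $(y,v)$. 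Neither these estimates nor substitutes for them appear in your proposal (and ``reusing the change-of-variables argument from part $(i)$'' does not by itself produce the exponent $1-3\gamma$, which comes from the shared high-mark vertex, not from rescaling), so part $(ii)$ remains unproved.
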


\begin{remark}
By the Mecke formula for Poisson point processes, it can be readily verified that
$$
C_{\beta,\gamma} = \frac{2\beta}{\gamma}\, \E \bigg[ \sum_{(P_1,\dots, P_{m-2})\in \mP_{\neq}^{m-2}} \one \big\{ \{ (0,1), P_1,\dots, P_{m-2} \} \text{ forms an } (m-1)\text{-clique}  \big\} \bigg]. 
$$
In other words, $C_{\beta,\gamma}$ denotes the expected count of $(m-1)$-cliques that include a fixed node $(0,1)$ of the highest mark. 
\end{remark}

Before proceeding, we formally define the scaling constants in \eqref{e:def.clique.pp} as
$$
b_n := \nu\Big(\frac{1}{n}\Big) \sim C_{\beta,\gamma} n^{\gamma}, \ \ \ n\to\infty.
$$
Now, we can present a series of limit theorems for \eqref{e:def.clique.counts} and \eqref{e:def.clique.pp}.

\begin{theorem}[Point process convergence for clique counts]  \label{t:pp.weak.conv.clique}
Let $\gamma \in (1/2,1)$. Then, the point process \eqref{e:def.clique.pp} weakly converges to a Poisson point process in $(0,\infty]$ with intensity measure $\kappa_\gamma$ such that $\kappa_\gamma \big( (y,\infty] \big)=y^{-1/\gamma}$, $y>0$. 
\end{theorem}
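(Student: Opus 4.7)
The plan is to follow the strategy of Theorem \ref{t:pp.weak.conv} closely, substituting $\mu$, $\inDt$, and $a_n$ with $\nu$, $\inCc$, and $b_n$, respectively. By Proposition \ref{p:exp.var.asym.clique}\,(i), $\nu \in \ms{RV}_{-\gamma}^{(0)}$, and therefore the generalized inverse $\nu^{\leftarrow}(x) := \inf\{s \ge 0 : \nu(s) \le x\}$ lies in $\ms{RV}_{-1/\gamma}^{(\infty)}$, playing the same role here as $\muti$ did in the sub-tree argument.

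As a first step, I would establish
$$
\sum_{P=(X,U)\in \mP_n} \delta_{b_n^{-1}\nu(U)} \Rightarrow \ms{PPP}(\kappa_\gamma), \qquad \text{in } M_p\big((0,\infty]\big),
$$
by the same de-Poissonization argument as in \eqref{e:de-Poi.approx}. For the i.i.d.\ companion $\sum_{i=1}^n \delta_{b_n^{-1}\nu(U_i)}$, the regular variation of $\nu^{\leftarrow}$ directly yields $n\,\probx{b_n^{-1}\nu(U_1) > y} \to y^{-1/\gamma}$ for each $y > 0$, which is precisely the condition required by Theorem 6.2 and Lemma 6.1 of \cite{resnick:2007}.

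Next I would verify that
$$
\sum_{P\in \mP_n} \delta_{b_n^{-1}\inCc(P)} - \sum_{P=(X,U)\in \mP_n} \delta_{b_n^{-1}\nu(U)} \stackrel{p}{\to} \emptyset.
$$
Fixing $f \in C_K^+\big((0,\infty]\big)$ with $\ms{supp}(f) \subset [\delta_0, \infty]$ and auxiliary constants $\delta, \delta' \in (0, \delta_0/2)$, I would split the expectation of $\sum_{P=(X,U)\in \mP_n} |f(b_n^{-1}\inCc(P)) - f(b_n^{-1}\nu(U))|$ according to whether $|\inCc(P) - \nu(U)|$ exceeds $\delta b_n$, applying the Mecke formula exactly as in the proof of Theorem \ref{t:pp.weak.conv}. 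The small-difference piece is controlled by the modulus of continuity of $f$ together with the estimate $n\,\nu^{\leftarrow}(\delta' b_n) \to (\delta')^{-1/\gamma}$. The large-difference piece reduces, via Chebyshev's inequality, to showing
$$
\frac{n}{b_n^2}\int_{\vep_1/n}^1 \text{Var}\big(\inCc(u)\big)\,\dif u \to 0
$$
for every $\vep_1 > 0$.

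The main technical point, though ultimately routine, is this last display. Inserting the bound from Proposition \ref{p:exp.var.asym.clique}\,(ii) and substituting $v = u^{-1}$ reduces the left-hand side to $C n^{1-2\gamma} \int_1^{n/\vep_1} v^{3\gamma-3} (\log v)^{m-3}\,\dif v$, so I would split into three regimes. When $2/3 < \gamma < 1$, Karamata's theorem gives a bound of order $n^{\gamma-1}(\log n)^{m-3}$, which vanishes because $\gamma < 1$. At the boundary $\gamma = 2/3$, direct evaluation produces a bound of order $n^{-1/3}(\log n)^{m-2} \to 0$. When $1/2 < \gamma < 2/3$, the integrand is $v^{-(1+\eta)}(\log v)^{m-3}$ with $\eta = 2 - 3\gamma > 0$, so the integral is bounded and the remaining factor $n^{1-2\gamma}$ tends to zero by $\gamma > 1/2$. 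These three subcases exhaust the parameter range $\gamma \in (1/2, 1)$, completing the argument.
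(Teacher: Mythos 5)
Your proposal is correct and follows essentially the same route as the paper: de-Poissonization plus the regular variation of $\nu$ to get $\ms{PPP}(\kappa_\gamma)$ for the marks process, then the Mecke/Chebyshev reduction to $\frac{n}{b_n^2}\int_{\vep_1/n}^1 \text{Var}(\inCc(u))\,\dif u \to 0$, handled via Proposition \ref{p:exp.var.asym.clique}. The only (immaterial) difference is in the final estimate: the paper absorbs the logarithmic factor into a small power $u^{-\eta}$, $\eta\in(0,1-\gamma)$, and splits at $\gamma=(2-\eta)/3$, whereas you keep the logarithm and treat the regimes $\gamma<2/3$, $\gamma=2/3$, $\gamma>2/3$ explicitly via Karamata; both computations are valid.
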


\begin{proof}
As in the proof of Theorem \ref{t:pp.weak.conv}, we begin by demonstrating that the process $\sum_{P=(X,U)\in \mP _n}\delta_{b_n^{-1}\nu(U)}$ converges weakly to $\ms{PPP}(\kappa_\gamma)$. To this end, we claim that 
\begin{align}
&\sum_{P=(X,U)\in \mP_n}\delta_{b_n^{-1}\nu(U)} - \sum_{i=1}^n \delta_{b_n^{-1}\nu(U_i)} \stackrel{p}{\to} \emptyset \ \ \text{in } M_p\big((0,\infty]\big),  \label{e:first.pp.clique}\\
&\sum_{i=1}^n \delta_{b_n^{-1}\nu(U_i)} \Rightarrow \ms{PPP}(\kappa_\gamma), \ \ \text{in }  M_p\big((0,\infty]\big),  \label{e:second.pp.clique} 
\end{align}
where $U_1, \dots, U_n$ are i.i.d.~uniform random variables on $(0,1)$. One can show  \eqref{e:first.pp.clique} by proceeding similarly to \eqref{e:de-Poi.approx}, while \eqref{e:second.pp.clique} follows straightforwardly from 
$$
n\P \big( b_n^{-1}\nu(U_1)>y \big) \to \kappa_\gamma\big( (y,\infty] \big) = y^{-1/\gamma}, \ \ \ n\to\infty. 
$$
It now suffices to verify that 
$$
\sum_{P\in \mathcal P_n}\delta_{b_n^{-1}\inCc(P)}  - \sum_{P=(X,U)\in \mP_n}\delta_{b_n^{-1}\nu(U)}  \stackrel{p}{\to} \emptyset \ \ \text{in } M_p\big((0,\infty]\big). 
$$
For the proof, as an analogue of \eqref{e:A_n.conv}, it is sufficient to demonstrate that for every $\vep_1>0$, 
\begin{equation}  \label{e:goal.pp.clique}
\frac{n}{b_n^2}\, \int_{\vep_1/n}^1 \text{Var}\big(\inCc(u)  \big)\dif u \to 0, \ \ \text{as }  n\to\infty. 
\end{equation}
By Proposition \ref{p:exp.var.asym.clique}, it holds that $n/b_n^2\le C n^{1-2\gamma}$, and further, 
\begin{equation}  \label{e:var.clique.An}
 \int_{\vep_1/n}^1 \text{Var}\big(\inCc(u)  \big)\dif u \le C \int_{\vep_1/n}^1 u^{1-3\gamma-\eta}\dif u, 
\end{equation}
where $\eta\in (0,1-\gamma)$. If $\frac{1}{2}<\gamma \le \frac{2-\eta}{3}$, then \eqref{e:var.clique.An} is at most finite (up to logarithmic factors), and thus, the left hand side of \eqref{e:goal.pp.clique} is bounded by $Cn^{1-2\gamma}$ (up to logarithmic factors), which clearly tends to $0$ as $n\to\infty$. If $\frac{2-\eta}{3}<\gamma<1$, then 
$$
\frac{n}{b_n^2}\, \int_{\vep_1/n}^1 \text{Var}\big(\inCc(u)  \big)\dif u  \le Cn^{1-2\gamma} \Big( \frac{\vep_1}{n} \Big)^{2-3\gamma-\eta} =Cn^{\gamma-1+\eta} \to 0, \ \ \ n\to\infty. 
$$
\end{proof}

\begin{corollary}  \label{cor:max.clique}
Let $\gamma\in (1/2,1)$. We have, as $n\to\infty$, 
$$
\frac{1}{b_n}\bigvee_{P\in \mP_n} \inCc(P) \Rightarrow Y_{\gamma}, 
$$
where  $Y_{\gamma}$ is a $1/\gamma$-Fr\'echet random variable. 
\end{corollary}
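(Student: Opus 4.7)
The plan is to obtain Corollary \ref{cor:max.clique} as a direct consequence of the point process convergence in Theorem \ref{t:pp.weak.conv.clique}, following the same template used in the proof of Corollary \ref{cor:maxima}. The key observation is that the maximum is a functional of the underlying point process, which converges weakly in $M_p\big((0,\infty]\big)$.

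First, I would rewrite the event of interest in terms of the point process \eqref{e:def.clique.pp}. Namely, for any $y > 0$,
$$
\Big\{ \frac{1}{b_n}\bigvee_{P\in \mP_n} \inCc(P) \le y \Big\} = \Big\{ \sum_{P\in \mP_n}\delta_{b_n^{-1}\inCc(P)}\big( (y,\infty] \big) = 0 \Big\}.
$$
Let $\sum_\ell \delta_{j_\ell}$ denote a realization of $\ms{PPP}(\kappa_\gamma)$. Since $\kappa_\gamma$ assigns no mass to the singleton $\{y\}$, the set $(y,\infty]$ is almost surely a continuity set of the limiting Poisson random measure. Hence the map $\xi \mapsto \xi\big((y,\infty]\big)$ from $M_p\big((0,\infty]\big)$ to $\mathbb{Z}_+$ is continuous $\ms{PPP}(\kappa_\gamma)$-a.s. (see Section 7.1 of \cite{resnick:2007}).

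Next, I would invoke Theorem \ref{t:pp.weak.conv.clique} together with the continuous mapping theorem to conclude that
$$
\P \Big( \sum_{P\in \mP_n}\delta_{b_n^{-1}\inCc(P)}\big( (y,\infty] \big) = 0 \Big) \to \P \Big( \sum_\ell \delta_{j_\ell}\big((y,\infty]\big) = 0 \Big) = e^{-\kappa_\gamma((y,\infty])} = e^{-y^{-1/\gamma}},
$$
as $n\to\infty$. This exactly identifies the limit as the CDF of a $1/\gamma$-Fréchet random variable $Y_\gamma$ evaluated at $y$, yielding the desired weak convergence.

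There is no real obstacle here, since all the heavy lifting has been performed in establishing Theorem \ref{t:pp.weak.conv.clique}; the corollary is merely an application of the continuous mapping theorem to a standard void-probability functional of the limiting Poisson process. The argument is essentially identical to that of Corollary \ref{cor:maxima}, with $(a_n, \inDt, \kappa_{\ell\gamma})$ replaced by $(b_n, \inCc, \kappa_\gamma)$.
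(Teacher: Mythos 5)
Your proposal is correct and follows exactly the route the paper takes: the paper omits the proof, stating it is identical to that of Corollary \ref{cor:maxima}, which likewise identifies the maximum event with the void probability of the point process on $(y,\infty]$ and applies the point process convergence (here Theorem \ref{t:pp.weak.conv.clique}) to obtain the Fr\'echet limit $e^{-y^{-1/\gamma}}$. Nothing further is needed.
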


The proof of Corollary \ref{cor:max.clique} is completely identical to that of Corollary \ref{cor:maxima}, so we will omit it.

Finally, we present a stable limit theorem for the clique counts \eqref{e:def.clique.counts}. Unlike Theorem \ref{t:stable.limit}, there is no phase transition with respect to the values of $\gamma$.

\begin{theorem}[Stable limit theorem for clique counts]
Let $\gamma \in (1/2,1)$. Then,  as $n\to\infty$, 
$$
\frac{1}{b_n}\, \Big( \sum_{P\in \mP_n} \inCc(P) - \E \Big[  \sum_{P\in \mP_n} \inCc(P)  \Big] \Big) \Rightarrow S_{1/\gamma}, 
$$
where $S_{1/\gamma}$ is a zero-mean $1/\gamma$-stable random variable, defined by \eqref{e:ch.f.stable} with $\ell=1$. 
\end{theorem}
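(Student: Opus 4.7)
The plan is to adapt the blueprint of the proof of Theorem~\ref{t:stable.limit}(i). The regime $\gamma\in(1/2,1)$ is analogous to the centering regime $1/(2\ell)<\gamma<1/\ell$ of the sub-tree case with the formal choice $\ell=1$: by Proposition~\ref{p:exp.var.asym.clique}(i), $\nu(u)\sim C_{\beta,\gamma}u^{-\gamma}$ plays the role of $\mu(u)$ with ``$\ell\gamma=\gamma$'', and the subordinator threshold $1/\ell=1$ is never crossed, so only the analogue of part~(i) is relevant.

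The first step is to establish the uniform counterpart of Proposition~\ref{p:stable.limit.uniform}(i),
\begin{equation*}
\frac{1}{b_n}\Big(\sum_{P=(X,U)\in\mP_n}\nu(U)-\E\Big[\sum_{P=(X,U)\in\mP_n}\nu(U)\Big]\Big)\Rightarrow S_{1/\gamma}.
\end{equation*}
Theorem~\ref{t:pp.weak.conv.clique} already supplies the point process convergence $\sum_{P=(X,U)\in\mP_n}\delta_{b_n^{-1}\nu(U)}\Rightarrow\ms{PPP}(\kappa_\gamma)$; from there, applying the continuous mapping theorem with the truncation map $T_\vep$ and then killing the $\vep\downarrow 0$ small-jump contribution by Chebyshev combined with the Mecke formula proceeds verbatim as in the cited proposition. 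The crucial Karamata computation involves $\nu(v^{-1})^2v^{-2}\in\ms{RV}_{2\gamma-2}^{(\infty)}$, which is integrable at infinity since $2\gamma-2>-1$.

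The second step reduces the clique problem to its uniform counterpart by verifying three negligibility conditions mirroring \eqref{e:cond1.stable.limit.lighter}--\eqref{e:cond3.stable.limit.lighter}:
\begin{align*}
&(\mathrm{C}_1)\quad \limsup_{M\to\infty}\limsup_{n\to\infty}\frac{1}{b_n^2}\,\text{Var}\Big(\sum_{P=(X,U)\in\mP_{n,M/n}^{(\uparrow)}}\nu(U)\Big)=0,\\
&(\mathrm{C}_2)\quad \limsup_{M\to\infty}\limsup_{n\to\infty}\frac{1}{b_n^2}\,\text{Var}\Big(\sum_{P\in\mP_{n,M/n}^{(\uparrow)}}\inCc(P)\Big)=0,\\
&(\mathrm{C}_3)\quad \limsup_{n\to\infty}\frac{1}{b_n}\,\E\Big[\Big|\sum_{P=(X,U)\in\mP_{n,M/n}^{(\downarrow)}}\big(\inCc(P)-\nu(U)\big)\Big|\Big]=0\ \text{for every }M>0.
\end{align*}
Conditions $(\mathrm{C}_1)$ and $(\mathrm{C}_3)$ are routine. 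For $(\mathrm{C}_1)$, the Mecke formula with $\nu(u)\sim C_{\beta,\gamma}u^{-\gamma}$ and Karamata's theorem (using $2\gamma-1>0$) give $\frac{n}{b_n^2}\int_{M/n}^1\nu(u)^2\,du\sim CM^{1-2\gamma}$, vanishing as $M\to\infty$. For $(\mathrm{C}_3)$, Cauchy--Schwarz yields the upper bound $\frac{n}{b_n}\int_0^{M/n}\sqrt{\text{Var}(\inCc(u))}\,du$; by Proposition~\ref{p:exp.var.asym.clique}(ii), the integrand is of order $u^{(1-3\gamma)/2}$ up to logarithms, and since $(1-3\gamma)/2>-1$ when $\gamma<1$, Karamata delivers a bound of order $n^{(\gamma-1)/2}$ up to logarithms, which tends to $0$.

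The principal obstacle is $(\mathrm{C}_2)$. Expanding the variance by the Mecke-based covariance formula \eqref{e:3rd.cond.cov}, the diagonal term is controlled via $\E[\inCc(u)^2]=\text{Var}(\inCc(u))+\nu(u)^2\le Cu^{-2\gamma}$ (the $u^{-2\gamma}$ contribution from $\nu(u)^2$ dominates $u^{1-3\gamma}(\log u^{-1})^{m-3}$ whenever $\gamma<1$), which integrates to $CM^{1-2\gamma}\to 0$. The off-diagonal covariance is the genuinely delicate part: following Case~$(i)$ of the proof of Theorem~\ref{t:stable.limit}(i), one reduces to configurations in which the two $m$-cliques rooted at $(0,u)$ and $(y,v)$ share exactly one vertex $(z_0,w_0)$ with $w_0>u\vee v$. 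The Mecke formula then converts this into a high-dimensional integral over $y,u,v,w_0,z_0$ and the $2(m-2)$ additional vertices completing the two $(m-1)$-cliques around $\{(0,u),(z_0,w_0)\}$ and $\{(y,v),(z_0,w_0)\}$, respectively. Since every additional vertex must be joined to both its root and to $(z_0,w_0)$, the spatial integration produces a factor $w_0^{\gamma-1}$ per incident edge together with a logarithmic factor $(\log w_0^{-1})^{m-3}$ arising by the same combinatorics as in Proposition~\ref{p:exp.var.asym.clique}(i). Combining these with the elementary bound \eqref{e:elementary.bdd} and Karamata's theorem should yield a final bound of order $M^{1-2\gamma}$ (up to logarithmic factors) relative to $b_n^2/n\sim Cn^{2\gamma-1}$, vanishing as $M\to\infty$. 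Careful bookkeeping of the interplay between the spatial $w_0^{\gamma-1}$ factors, the $w_0$-dependent logarithms, and the subsequent $u,v$-integrations is where the proof becomes most technical.
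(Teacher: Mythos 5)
Your overall architecture matches the paper's: reduce to the uniform statement for $\sum\nu(U)$ (which the paper also treats as a repetition of Proposition \ref{p:stable.limit.uniform}$(i)$), then verify three negligibility conditions. The only structural difference is inessential: you truncate the marks at $M/n$ with a double limit, while the paper truncates at $n^{-1+\xi}$ for a small $\xi>0$ and gets single limits in $n$; your computations for $(\mathrm{C}_1)$, $(\mathrm{C}_3)$ and the diagonal part of $(\mathrm{C}_2)$ are correct and parallel \eqref{e:cond1.stable.limit.clique}, \eqref{e:cond3.stable.limit.clique}.

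However, there is a genuine gap at exactly the point the paper spends most of its effort: the off-diagonal covariance term in $(\mathrm{C}_2)$. First, your reduction to ``the two $m$-cliques share exactly one vertex $(z_0,w_0)$'' is not justified. The cycle-breaking device that permits the reduction to one common point in the tree setting exploits the tree structure; for cliques the paper does not reduce to $q=1$ but sums over all $q=1,\dots,m-1$ common vertices, singles out the common vertex $(z,w)$ of \emph{highest} mark, and splits into Case $(i)$ ($v_1\wedge w_1>w$, with the configuration \eqref{e:conf.v1.w1.larger.than.w.covariance}) and Case $(ii)$ (otherwise), the latter being handled by a completely different argument via Poisson moment bounds as in \eqref{e:top.Poisson.pt}. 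Second, your sketch never explains how the integral over the spatial coordinate $y\in[0,n]$ of the second root is controlled; integrating $y$ freely would produce a fatal factor of $n$. The paper's key mechanism is the geometric constraint \eqref{e:s.wedge}, $v_{m-2}\wedge w_{m-q-1}\le s_\wedge(u\wedge v,|y|)$, combined with $\int_0^\infty s_\wedge(u\wedge v,b)^{1-\gamma}\dif b\le C(u\wedge v)^{-\gamma}$ in \eqref{e:bound.s.wedge} (and its $\gamma$-power analogue \eqref{e:s.wedge.gamma} for $m=3$), together with an asymmetric choice of which edge incident to a root to discard depending on whether $w_{m-q-1}\le v_{m-2}$ or not. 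None of this appears in your proposal; the final bound of order $M^{1-2\gamma}$ is asserted (``should yield'') rather than derived, and the claimed per-edge factors $w_0^{\gamma-1}$ and logarithms do not by themselves produce it. Since $(\mathrm{C}_2)$ is the crux of the theorem, the proposal as written does not constitute a proof.
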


\begin{remark}
As explained in Section \ref{sec:tree}, here we do not consider the degenerate case  $\ga \ge 1$. Indeed, in that parameter regime, each node  would have infinite degree with probability 1.
\end{remark}

\begin{proof}
As a first step, we need to establish a stable limit theorem analogous to Proposition \ref{p:stable.limit.uniform} $(i)$. Specifically, we claim that as $n \to \infty$,
\begin{equation}  \label{e:stable.limit.uniform.clique}
\frac{1}{b_n}\, \Bigg( \sum_{P=(X,U)\in \mP _n} \nu(U) - \E \Bigg[  \sum_{P=(X,U)\in \mP _n} \nu(U) \Bigg] \Bigg) \Rightarrow S_{1/\gamma}. 
\end{equation}
The proof of \eqref{e:stable.limit.uniform.clique}, however, is essentially a repetition of that of Proposition \ref{p:stable.limit.uniform} $(i)$ with appropriate modifications, so we shall omit the detailed arguments.

Now, as an analogue of \eqref{e:cond1.stable.limit.lighter}--\eqref{e:cond3.stable.limit.lighter}, the remainder of the argument will be dedicated to demonstrating the following results.
\begin{align}
&\lim_{n\to\infty}\frac{1}{b_n^2}\, \text{Var} \Big( \sum_{P=(X,U)\in \mP_{n,  n^{-1 + \xi}}^{(\uparrow)}} \nu(U)\Big)=0,  \label{e:cond1.stable.limit.clique} \\
&\lim_{n\to\infty}\frac{1}{b_n^2}\, \text{Var} \Big( \sum_{P\in \mP_{n,  n^{-1 + \xi}}^{(\uparrow)}} \inCc(P)\Big)=0,  \label{e:cond2.stable.limit.clique} \\
&\lim_{n\to\infty}\frac{1}{b_n}\, \E \bigg[ \sum_{P=(X,U)\in \mP_{n,  n^{-1 + \xi}}^{(\downarrow)}} \big|\,  \inCc(P) - \nu(U) \big| \bigg] = 0, \label{e:cond3.stable.limit.clique}
\end{align}
where $\xi$ is a finite and positive constant that will be determined shortly. 

For the proof of  \eqref{e:cond3.stable.limit.clique}, fix $\eta \in \big(0,(1-\gamma)/2\big)$ and choose 
$$
\xi \in \Big( 0,\, \frac{1-\gamma-\eta}{3(1-\gamma)-\eta} \Big). 
$$
By the Cauchy-Schwarz inequality and Proposition \ref{p:exp.var.asym.clique}, 
\begin{align*}
&\frac{1}{b_n}\, \E \bigg[ \sum_{P=(X,U)\in \mP_{n,  n^{-1 + \xi}}^{(\downarrow)}} \big|\, \inCc(P) - \nu(U) \big| \bigg]  \\
&= \frac{n}{b_n}\, \int_0^{n^{-1+\xi}} \E \Big[ \big|\inCc(u) -\nu(u) \big| \Big]\dif u \le  \frac{n}{b_n}\, \int_0^{n^{-1+\xi}} \sqrt{\text{Var}\big( \inCc(u) \big)} \dif u\\
&\le Cn^{1-\gamma}  \int_0^{n^{-1+\xi}} u^{\frac{1}{2}-\frac{3\gamma}{2}-\frac{\eta}{2}} \dif u = C n^{-\frac{1}{2}(1-\gamma)+\frac{\eta}{2}+\frac{\xi}{2}(3-3\gamma-\eta)} \to 0, \ \ \ n\to\infty. 
\end{align*}
The last convergence is assured by the constraint on $\xi$. 

To show \eqref{e:cond1.stable.limit.clique}, we apply Proposition \ref{p:exp.var.asym.clique} to obtain that,  as $n\to\infty$, 
\begin{align*}
\frac{1}{b_n^2}\, \text{Var} \Big( \sum_{P=(X,U)\in \mP_{n,  n^{-1 + \xi}}^{(\uparrow)}} \nu(U)\Big) &=\frac{n}{b_n^2}\, \int_{n^{-1+\xi}}^1 \nu(u)^2 \dif u \\
&\le Cn^{1-2\gamma} \int_{n^{-1+\xi}}^1  u^{-2\gamma} \dif u \le Cn^{\xi(1-2\gamma)} \to 0. 
\end{align*}

Finally, for the proof of \eqref{e:cond2.stable.limit.clique}, the Mecke formula yields that 
\begin{align*}
&\frac{1}{b_n^2}\, \text{Var} \Big( \sum_{P\in \mP_{n,  n^{-1 + \xi}}^{(\uparrow)}} \inCc(P)\Big) = \frac{n}{b_n^2}\, \int_{n^{-1+\xi}}^1 \E \big[ \inCc(u)^2 \big]\dif u \\
&\qquad +\frac{n}{b_n^2}\, \int_0^n \int_{n^{-1+\xi}}^1 \int_{n^{-1+\xi}}^1 \text{Cov}\big( \inCc(0,u), \, \inCc(y,v) \big) \dif u \dif v \dif y + R_n. 
\end{align*}
Here, as in \eqref{e:3rd.cond.cov}, $R_n$ represents a negligible term that consists of the sum of the remaining terms. 
Proposition \ref{p:exp.var.asym.clique} ensures that 
\begin{align*}
\E \big[ \inCc(u)^2 \big] =\text{Var}\big( \inCc(u) \big) + \nu(u)^2 \le C\big( u^{1-3\gamma}(\log u^{-1})^{m-3} + u^{-2\gamma} \big) \le Cu^{-2\gamma}, 
\end{align*}
and hence, as $n\to\infty$, 
$$
\frac{n}{b_n^2}\, \int_{n^{-1+\xi}}^1 \E \big[ \inCc(u)^2 \big]\dif u \le Cn^{\xi(1-2\gamma)} \to 0. 
$$
Using an indicator function $g$ at \eqref{e:def.g} in the Appendix, together with the Mecke formula, 
\begin{align*}
&\int_0^n \int_{n^{-1+\xi}}^1 \int_{n^{-1+\xi}}^1 \text{Cov}\big( \inCc(0,u), \, \inCc(y,v) \big) \dif u \dif v\dif y \\
&=\sum_{q=1}^{m-1} \int_0^n \int_{n^{-1+\xi}}^1 \int_{n^{-1+\xi}}^1 \\
&\quad \E \bigg[ \sum_{\substack{(P_1,\dots,P_{m-1})\in  \mP_{\neq}^{m-1}}} \hspace{-20pt}\sum_{\substack{(Q_1,\dots,Q_{m-1})\in \mP_{\neq}^{m-1}, \\ |(P_1,\dots,P_{m-1})\cap (Q_1,\dots,Q_{m-1})|=q}} \hspace{-30pt} g\big( (0,u), P_1,\dots,P_{m-1} \big)\, g\big( (y,v), Q_1,\dots,Q_{m-1} \big)  \bigg]  \dif u \dif v \dif y  \\
&=: \sum_{q=1}^{m-1} \int_0^n \int_{n^{-1+\xi}}^1 \int_{n^{-1+\xi}}^1 J_q\big( u, (y,v)\big)  \dif u \dif v\dif y. 
\end{align*}
As in the proof of Proposition \ref{p:exp.var.asym.clique} $(ii)$ in the Appendix, we consider the following configuration of nodes. First, let $(z,w)$ be the node with the highest mark among all $q$ common points. Denote by $(y_1,v_1), \dots, (y_{m-2}, v_{m-2})$, with $v_1 \ge \cdots \ge v_{m-2}$, a set of nodes in the $m$-clique that includes $(0,u)$ and $(z,w)$. Additionally, let $(z_1,w_1), \dots, (z_{m-q-1}, w_{m-q-1})$, with $w_1 \ge \cdots \ge w_{m-q-1}$, represent the remaining nodes in the other $m$-clique that contains $(y,v)$. Our discussion below is divided into two cases.
\vspace{5pt}

\noindent {\bfseries Case $(i)$:} $v_1 \wedge w_1>w$. \\
To derive a suitable upper bound for $J_q\big( u, (y,v) \big)$, we consider the following configuration of nodes and edges, similar to \eqref{e:conf.v1.w1.larger.than.w} in the Appendix: 
\begin{align}
\begin{split}  \label{e:conf.v1.w1.larger.than.w.covariance}
&(y_1,v_1) \to \cdots \to (y_{m-2},v_{m-2}) \to (0,u), \\
&(z_1,w_1) \to \cdots \to (z_{m-q-1}, w_{m-q-1}) \to (y,v), \\
&(y_1,v_1) \to (z,w), \ (z_1,w_1) \to (z,w); 
\end{split}
\end{align}
see also Figure \ref{fig:clique.configuration.cov}. 

\begin{figure}
\includegraphics[scale=0.4]{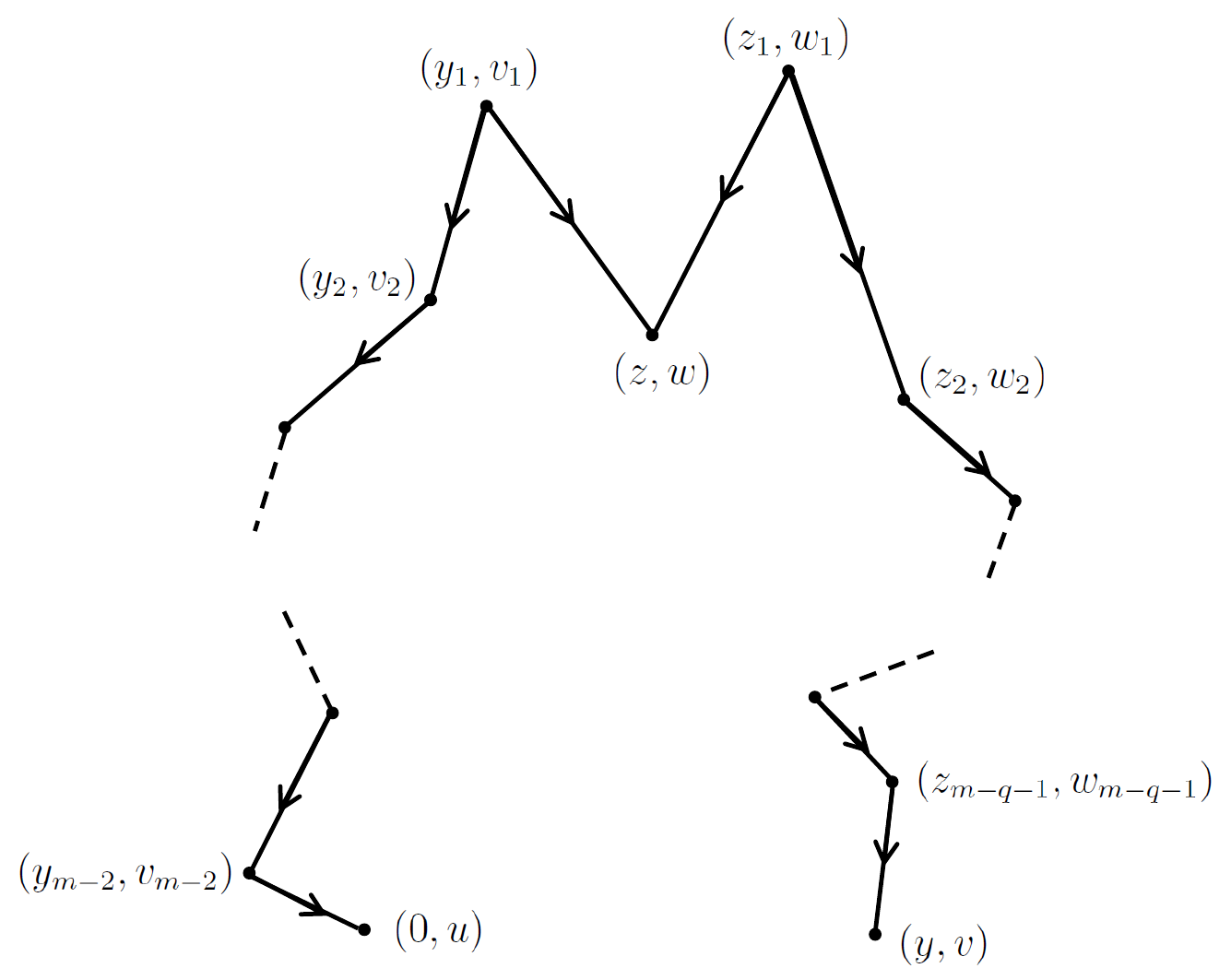}
\caption{\label{fig:clique.configuration.cov} \footnotesize{Configuration of nodes and edges for the proof of Case $(i)$.}}
\end{figure}

We first consider the case $m\ge4$. By the triangle inequality and \eqref{e:conf.v1.w1.larger.than.w.covariance}, we have that 
$$
|y| \le (2m-q-1)\beta (u\wedge v)^{-\gamma} (v_{m-2}\wedge w_{m-q-1})^{\gamma-1} \le (2m-q-1)\beta (u\wedge v)^{-1}; 
$$
hence, it follows that 
\begin{equation}  \label{e:s.wedge}
v_{m-2}\wedge w_{m-q-1} \le \Big(\frac{(2m-q-1)\beta}{(u\wedge v)^\gamma |y|}  \Big)^{1/(1-\gamma)} \one \Big\{ |y| \le \frac{(2m-q-1)\beta}{u \wedge v}\Big\} =: s\big( u\wedge v, |y| \big). 
\end{equation}
For ease of description, we set $(2m-q-1)\beta \equiv 1$ and define 
$$
s_\wedge \big( u\wedge v, |y| \big) := s\big( u\wedge v, |y| \big) \wedge 1. 
$$
Under such  configuration of nodes and edges, one needs to upper bound the integral 
\begin{align}
\begin{split}  \label{e:initial.configuration}
&\int_{v_{m-2}=u}^1 \int_{v_{m-3}=v_{m-2}}^1 \cdots \int_{v_2=v_3}^1 \int_{w_{m-q-1}=v}^1 \int_{w_{m-q-2}=w_{m-q-1}}^1 \cdots \int_{w_2=w_3}^1 \\
&\times \int_{w=u\vee v}^1 \int_{v_1=w}^1 \int_{w_1=w}^1 \prod_{i=1}^{m-2} \int_{y_i\in \R} \int_{z\in \R} \prod_{j=1}^{m-q-1} \int_{z_j\in \R} \one \big\{ (y_1,v_1) \to \cdots \to (y_{m-2},v_{m-2}) \to (0,u) \big\} \\
&\times \one \big\{ (z_1,w_1)\to \cdots \to (z_{m-q-1}, w_{m-q-1}) \to (y,v), \, (y_1,v_1) \to (z,w), \, (z_1,w_1) \to (z,w) \big\} \\
&\times \one \big\{ v_{m-2}\wedge w_{m-q-1} \le s_\wedge (u\wedge v, |y|) \big\}. 
\end{split}
\end{align}
The subsequent argument will vary depending on whether $w_{m-q-1} \le v_{m-2}$ or $v_{m-2} \le w_{m-q-1}$.
\medskip

\noindent {\bfseries Case $(i\text{-}1)$:} $w_{m-q-1} \le v_{m-2}$. \\
Dropping the condition $(z_{m-q-1}, w_{m-q-1}) \to (y,v)$ from the above indicator, while  integrating the remaining indicators over spatial coordinates, we now have to bound 
\begin{align*}
A_{u, (y,v)} &:= u^{-\gamma} \int_{v_{m-2}=u}^1 v_{m-2}^{-1} \int_{v_{m-3}=v_{m-2}}^1 v_{m-3}^{-1} \times \cdots\times  \int_{v_2=v_3}^1 v_2^{-1} \\
&\quad \times \int_{w_{m-q-1}=0}^{s_\wedge (u\wedge v, |y|)}  w_{m-q-1}^{-\gamma}\int_{w_{m-q-2}=w_{m-q-1}}^1 w_{m-q-2}^{-1}\times  \cdots\times  \int_{w_2=w_3}^1w_2^{-1} \\
&\quad \times \int_{w=u\vee v}^1 w^{-2\gamma}\int_{v_1=w}^1 v_1^{2(\gamma-1)} \int_{w_1=w}^1w_1^{2(\gamma-1)}. 
\end{align*}
For the sake of convenience in our discussion, we will disregard the appearance of logarithmic terms throughout the following argument. It is then straightforward to obtain that
\begin{equation}  \label{e:bound.A.u.y.v}
A_{u, (y,v)} \le C u^{-\gamma} (u \vee v)^{1-2\gamma} s_\wedge (u \wedge v, |y|)^{1-\gamma}.
\end{equation}
Thus, 
\begin{align*}
\int_0^n \int_{n^{-1+\xi}}^1 \int_{n^{-1+\xi}}^1  A_{u, (y,v)} \dif u \dif v \dif y \le C\int_{n^{-1+\xi}}^1 \int_{n^{-1+\xi}}^1  u^{-\gamma}  (u\vee v )^{1-2\gamma} \int_0^\infty s_\wedge (u\wedge v, b)^{1-\gamma} \dif b \dif u\dif v. 
\end{align*}
Observe now that 
\begin{equation}  \label{e:bound.s.wedge}
\int_0^\infty s_\wedge (u\wedge v, b)^{1-\gamma} \dif b \le (u\wedge v)^{-\gamma} + (u\wedge v)^{-\gamma} \int_{(u\wedge v)^{-\gamma}}^{(u\wedge v)^{-1}} b^{-1}\dif b \le C(u\wedge v)^{-\gamma}, 
\end{equation}
where at the last step, we have again ignored a logarithmic term. Appealing to this  bound, one can see that 
\begin{align}
\begin{split} \label{e:A.u.yv.into.Bn.Cn}
\int_0^n \int_{n^{-1+\xi}}^1 \int_{n^{-1+\xi}}^1  A_{u, (y,v)} \dif u \dif v \dif y  
&\le C \int_{v=n^{-1+\xi}}^1 v^{-\gamma} \int_{u=v}^1u^{1-3\gamma} + C \int_{u=n^{-1+\xi}}^1 u^{-2\gamma} \int_{v=u}^1 v^{1-2\gamma} \\
&=: B_n + C_n. 
\end{split}
\end{align}
It is easy to calculate that 
$$
B_n \le \begin{cases}
C &\text{ if } 1/2<\gamma \le 3/4, \\
Cn^{(1-\xi)(4\gamma-3)} & \text{ if } 3/4 <\gamma<1, 
\end{cases}
$$
while it holds that $C_n\le Cn^{(1-\xi)(2\gamma-1)}$ for all $1/2<\gamma<1$. Now, it follows that $nb_n^{-2}(B_n+C_n) \le Cn^{1-2\gamma} (B_n+C_n) \to 0$ as $n\to\infty$. 
\medskip

\noindent {\bfseries Case $(i\text{-}2)$:} $v_{m-2} \le w_{m-q-1}$. \\
In this case, by removing the condition $(y_{m-2}, v_{m-2}) \to (0,u)$ from the indicator in \eqref{e:initial.configuration}, we integrate the remaining indicators over spatial coordinates. Our task is then to provide an upper bound for 
\begin{align*}
B_{u, (y,v)} &:= v^{-\gamma} \int_{v_{m-2}=0}^{s_\wedge (u\wedge v, |y|)} v_{m-2}^{-\gamma} \int_{v_{m-3}=v_{m-2}}^1 v_{m-3}^{-1} \times \cdots\times  \int_{v_2=v_3}^1 v_2^{-1} \\
&\quad \times \int_{w_{m-q-1}=v}^1  w_{m-q-1}^{-1}\int_{w_{m-q-2}=w_{m-q-1}}^1 w_{m-q-2}^{-1}\times  \cdots\times  \int_{w_2=w_3}^1w_2^{-1} \\
&\quad \times \int_{w=u\vee v}^1 w^{-2\gamma}\int_{v_1=w}^1 v_1^{2(\gamma-1)} \int_{w_1=w}^1w_1^{2(\gamma-1)}. 
\end{align*}
Repeating the same argument as in Case $(i\text{-}1)$, we can conclude that 
$$
B_{u, (y,v)} \le Cv^{-\gamma} (u\vee v )^{1-2\gamma} s_\wedge (u\wedge v, |y|)^{1-\gamma}. 
$$ 
By the symmetry with \eqref{e:bound.A.u.y.v}, it immediately follows that 
$$
\frac{n}{b_n^2}\, \int_0^n \int_{n^{-1+\xi}}^1 \int_{n^{-1+\xi}}^1  B_{u, (y,v)} \dif u \dif v \dif y \to 0, \ \ \ n\to\infty. 
$$

Next, we consider the case $m=3$. Due to the restriction $v_1 \wedge w_1 > w$, it must be that $q=1$. Therefore, our configuration of nodes and edges can be described as
\begin{align*}
(y_1,v_1) \to (0,u), \ (z_1, w_1) \to (y,v), \ (y_1,v_1) \to (z,w), \ (z_1, w_1) \to (z,w).
\end{align*}
Analogous to \eqref{e:s.wedge}, we have $v_1 \wedge w_1 \le s_\wedge (u \wedge v, |y|)$ up to constant factors. Suppose first that $w_1 \le v_1$. In this case, we remove the condition $(z_1,w_1) \to (y,v)$ and aim to bound 
\begin{align*}
C_{u, (y,v)} &:= u^{-\gamma} \int_{w=u\vee v }^1 w^{-2\gamma} \int_{v_1=w}^1 v_1^{2(\gamma-1)}\int_{w_1=0}^{s_\wedge (u\wedge v, |y|)} w_1^{\gamma-1} \le Cu^{-\gamma} (u\vee v )^{1-2\gamma} s_\wedge (u\wedge v, |y|)^\gamma. 
\end{align*}
Then, 
$$
\int_0^n \int_{n^{-1+\xi}}^1 \int_{n^{-1+\xi}}^1  C_{u, (y,v)} \dif u \dif v \dif y \le C  \int_{n^{-1+\xi}}^1 \int_{n^{-1+\xi}}^1u^{-\gamma} (u\vee v )^{1-2\gamma}  \int_0^\infty s_\wedge (u\wedge v, b)^\gamma \dif b\dif u \dif v. 
$$
By the argument similar to \eqref{e:bound.s.wedge}, we have that 
\begin{equation}  \label{e:s.wedge.gamma}
\int_0^\infty s_\wedge (u\wedge v, b)^\gamma \dif b \le C(u\wedge v)^{-\gamma}. 
\end{equation}
Now, by substituting \eqref{e:s.wedge.gamma}, one can get the same bound as in \eqref{e:A.u.yv.into.Bn.Cn}; thus, 
$$
\frac{n}{b_n^2}\, \int_0^n \int_{n^{-1+\xi}}^1 \int_{n^{-1+\xi}}^1  C_{u, (y,v)} \dif u \dif v \dif y \to 0, \ \ \ n\to\infty. 
$$
Suppose next that  $v_1<w_1$, then we must drop the condition $(y_1,v_1)\to (0,u)$ and have to bound 
$$
D_{u, (y,v)} := v^{-\gamma} \int_{w=u\vee v }^1 w^{-2\gamma} \int_{v_1=0}^{s_\wedge (u\wedge v, |y|)} v_1^{\gamma-1} \int_{w_1=w}^1 w_1^{2(\gamma-1)}  \le Cv^{-\gamma} (u\vee v )^{1-2\gamma} s_\wedge (u\wedge v, |y|)^\gamma. 
$$
The rest of the discussion is almost identical to the last case. 
\vspace{5pt}

\noindent {\bfseries Case $(ii)$:} $v_1 \wedge w_1 \le w \le  v_1 \vee w_1$ or $w \ge v_1 \vee w_1$. \\
As an analogue of \eqref{e:clique.counts.bdd} in the Appendix,  the corresponding terms in $J_q\big( u, (y,v) \big)$ can be bounded by 
\begin{align}
\begin{split}  \label{e:top.Poisson.pt}
&\E \Big[ \sum_{P\in \mP \cap (N^\uparrow (0,u)\cap N^\uparrow(y,v))} \mP \big( N^\downarrow (P) \big)^{2m-q-3}\Big] \\
&= \int_0^1 \int_{\R} \one \big\{ (z,w)\to (0,u), \, (z,w)\to (y,v) \big\} \E \Big[ \mP\big(  N^\downarrow (z,w)\big)^{2m-q-3} \Big] \dif z \dif  w \\
&\le C \int_0^1 \int_{\R} \one \big\{ |z|\le \beta u^{-\gamma} w^{\gamma-1}, \, |z-y|\le \beta v^{-\gamma} w^{\gamma-1} \big\} \dif z \dif w. 
\end{split}
\end{align}
Since $|y|\le |y-z|+|z| \le 2\beta (u\wedge v)^{-\gamma}w^{\gamma-1}$, it follows that $w\le s_{\wedge}(u\wedge v, |y|)$ up to constant factors. Now, the last term in \eqref{e:top.Poisson.pt} is further bounded by 
$$
C\int_0^{s_\wedge (u\wedge v, |y|)}  (u\vee v)^{-\gamma}w^{\gamma-1} \dif w = C (u\vee v)^{-\gamma} s_\wedge (u\wedge v, |y|)^\gamma. 
$$
Finally, it follows from \eqref{e:s.wedge.gamma} that 
\begin{align*}
&\frac{n}{b_n^2}\, \int_0^n \int_{n^{-1+\xi}}^1\int_{n^{-1+\xi}}^1 (u\vee v)^{-\gamma} s_\wedge (u\wedge v, |y|)^\gamma \dif u \dif v \dif y \\
&\le \frac{Cn}{b_n^2}\,  \int_{n^{-1+\xi}}^1 u^{-\gamma} \dif u \le Cn^{1-2\gamma} \to 0, \ \ \ n\to\infty, 
\end{align*}
as desired. 
\end{proof}

%
%
\section{Appendix: proofs of expectation and variance asymptotics with a fixed lowest mark} 
\label{sec:proofs}

\subsection{Proof of Proposition \ref{p:exp.asym}}
We begin with the following lemma.
\begin{lemma} \label{l:sequential.integral}
For every $x\in \R$ and $d\ge1$, we have as $u\to0$, 
\begin{align*}
&\int_{w_d=u}^1 \int_{w_{d-1} = w_d}^1 \cdots \int_{w_1=w_2}^1 \int_{z_d\in \R} \cdots \int_{z_1\in \R} \one \big\{ (z_1,w_1)\to \cdots \to (z_d,w_d) \to (x,u) \big\} \\
&\quad \sim \frac{(2\beta)^d}{\gamma (d-1)!}\, u^{-\gamma} (\log u^{-1})^{d-1}. 
\end{align*}
Furthermore, the integral of the left  hand side is increasing as $u\downarrow 0$. 
\end{lemma}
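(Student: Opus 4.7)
The plan is to compute the integral essentially explicitly by integrating the spatial coordinates first (this produces a clean telescoping product) and then handling the mark integration by isolating the leading-order term from a logarithmically smaller correction.

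First I would integrate out the spatial variables $z_1,\ldots,z_d$ in order. The constraint $(z_i,w_i)\to(z_{i+1},w_{i+1})$ is $|z_i-z_{i+1}|\le \beta w_{i+1}^{-\gamma}w_i^{\gamma-1}$, and $z_i$ appears in only one such indicator once $z_1,\ldots,z_{i-1}$ have already been integrated out; hence the $z_i$-integral produces the factor $2\beta w_{i+1}^{-\gamma}w_i^{\gamma-1}$ for $i=1,\ldots,d-1$ and $2\beta u^{-\gamma}w_d^{\gamma-1}$ for $i=d$. A short telescoping calculation then gives
\[
\int_{\R^d} \mathbf{1}\{(z_1,w_1)\to\cdots\to(z_d,w_d)\to(x,u)\}\,dz_1\cdots dz_d = (2\beta)^d u^{-\gamma} w_1^{\gamma-1}\prod_{i=2}^d w_i^{-1},
\]
independent of $x$.

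Next I would integrate the mark variables. Integrating $w_1\in[w_2,1]$ of $w_1^{\gamma-1}$ yields $\gamma^{-1}(1-w_2^\gamma)$. Taking the constant piece $\gamma^{-1}$ produces the leading contribution
\[
\frac{(2\beta)^d}{\gamma}\, u^{-\gamma}\int_{u\le w_d\le\cdots\le w_2\le 1} \prod_{i=2}^d w_i^{-1}\,dw_2\cdots dw_d,
\]
and the substitution $s_i=\log w_i^{-1}$ converts the simplex integral to $\int_{0\le s_2\le\cdots\le s_d\le \log u^{-1}} ds_2\cdots ds_d = (\log u^{-1})^{d-1}/(d-1)!$. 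This gives exactly the claimed leading term. The remaining correction, coming from the $-w_2^\gamma/\gamma$ piece, is the integral of $w_2^{\gamma-1}\prod_{i=3}^d w_i^{-1}$ over the same simplex; integrating $w_2$ first turns the singular factor into a bounded one, leaving at most $O((\log u^{-1})^{d-2})$, which is $o((\log u^{-1})^{d-1})$ as $u\to 0$.

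For the monotonicity assertion, I observe that for $0<u'<u<1$ the integration domain strictly enlarges (since $[u',1]\supset[u,1]$ for $w_d$) and the only $u$-dependent indicator, $|z_d-x|\le\beta u^{-\gamma}w_d^{\gamma-1}$, becomes less restrictive as $u$ decreases; since the integrand is nonnegative throughout, the integral is increasing as $u\downarrow 0$. I do not anticipate any serious obstacle: the main point requiring care is simply keeping the telescoping exponents in Step~1 straight, and verifying that the subleading correction in Step~2 is indeed a factor $\log u^{-1}$ smaller than the main term.
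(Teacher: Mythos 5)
Your proposal is correct and follows essentially the same route as the paper: integrate the spatial coordinates first to get $(2\beta)^d u^{-\gamma} w_1^{\gamma-1}\prod_{i=2}^d w_i^{-1}$, split the $w_1$-integral into the constant leading piece and the $w_2^\gamma$ correction, and show the correction is one logarithmic order smaller; your simplex substitution $s_i=\log w_i^{-1}$ is just an equivalent way of performing the paper's repeated application of $\int x^{-1}(\log x)^k\,dx=(\log x)^{k+1}/(k+1)$. The monotonicity argument via enlargement of the domain and relaxation of the indicator is also fine.
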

\begin{proof}
We first integrate an indicator function over all spatial coordinates: 
$$
 \prod_{i=1}^d \int_{z_i\in \R} \one \big\{ (z_1,w_1)\to \cdots \to (z_d,w_d) \to (x,u) \big\} = (2\beta)^d w_1^{\gamma-1} \Big( \prod_{i=2}^d w_i^{-1} \Big) u^{-\gamma}. 
$$
Next, the right hand side above is integrated with respect to $w_1$: 
\begin{align}
\begin{split}  \label{e:Au.and.Bu}
&(2\beta)^d\int_{w_d=u}^1  \int_{w_{d-1} = w_d}^1 \cdots \int_{w_1=w_2}^1 w_1^{\gamma-1} \Big( \prod_{i=2}^d w_i^{-1} \Big) u^{-\gamma} \\
&\quad = \frac{(2\beta)^d}{\gamma}\, u^{-\gamma} \int_{w_d=u}^1 w_d^{-1} \int_{w_{d-1} = w_d}^1 w_{d-1}^{-1} \times \cdots\times  \int_{w_3=w_4} w_3^{-1}\int_{w_2=w_3}^1 w_2^{-1} \\
&\qquad - \frac{(2\beta)^d}{\gamma}\, u^{-\gamma} \int_{w_d=u}^1 w_d^{-1} \int_{w_{d-1} = w_d}^1 w_{d-1}^{-1} \times \cdots\times  \int_{w_3=w_4} w_3^{-1}\int_{w_2=w_3}^1 w_2^{-1+\gamma} \\
&=: A_u  - B_u. 
\end{split}
\end{align}
Applying  the identity 
$$
\int x^{-1}(\log x )^k \dif x =\frac{(\log x )^{k+1}}{k+1}, \ \ \ k\ge1, 
$$
repeatedly, it is easy to derive that 
$$
A_u = \frac{(2\beta)^d u^{-\gamma} (\log u^{-1})^{d-1}}{\gamma (d-1)!}. 
$$

For the $B_u$ in \eqref{e:Au.and.Bu}, using an obvious bound $\int_{w_2=w_3}^1 w_2^{-1+\gamma}\le \gamma^{-1}$ and, once again, repeatedly applying the same identity as above, it holds that 
$$
B_u \le Cu^{-\gamma} (\log u^{-1})^{d-2} = o(A_u), \ \ \text{as } u\downarrow 0. 
$$
Now, the proof of Lemma \ref{l:sequential.integral} is completed. 
\end{proof}
\begin{proof}[Proof of \eqref{e:exp.asym}]
To treat a more general tree, we consider the following configuration of nodes and edges. 
\begin{align}
\begin{split}  \label{e:general.tree.exp.asym}
&(y_{i,1}, v_{i,1}) \to (y_{i,2}, v_{i,2}) \to \cdots \to (y_{i, s_i+1}, v_{i, s_i+1}) \to (z_1,w_1), \ \ i=1,\dots,p, \\
&(z_1,w_1) \to (z_2,w_2) \to \cdots \to (z_d,w_d) \to (x,u). 
\end{split}
\end{align}
Let $t=\sum_{i=1}^p s_i$. According to Lemma \ref{l:sequential.integral}, the contribution of the structure in \eqref{e:general.tree.exp.asym} is, up to constant factors and asymptotically, given by 
\begin{align}
\begin{split}  \label{e:one.step.tree}
&\int_{w_d=u}^1 \int_{w_{d-1}=w_{d}}^1 \cdots \int_{w_1=w_2}^1  w_1^{-p\gamma} (\log w_1^{-1})^t  \\
&\qquad \times \prod_{i=1}^d\int_{z_i\in \R} \big\{ (z_1,w_1) \to (z_2,w_2) \to \cdots \to (z_d,w_d) \to (x,u) \big\} \\
&= (2\beta)^d u^{-\gamma} \int_{w_d=u}^1 w_d^{-1} \int_{w_{d-1}=w_d}^1 w_{d-1}^{-1} \times \cdots \times \int_{w_2=w_3}^1 w_2^{-1} \int_{w_1=w_2}^1 w_1^{-1-(p-1)\gamma} (\log w_1^{-1})^t. 
\end{split}
\end{align}
Note that the last expression is clearly increasing as $u\downarrow 0$. By the integration by parts formula, 
\begin{equation}  \label{e:integration.by.parts}
\int_{w_1=w_2}^1 w_1^{-1-(p-1)\gamma} (\log w_1^{-1})^t = \frac{w_2^{-(p-1)\gamma}}{(p-1)\gamma} (\log w_2^{-1})^t - \frac{t}{(p-1)\gamma}\int_{w_1=w_2}^1  w_1^{-1-(p-1)\gamma} (\log w_1^{-1})^{t-1}. 
\end{equation}
In the above, the first term asymptotically dominates the second one as $u\downarrow0$. Appealing to \eqref{e:integration.by.parts} repeatedly, we conclude that \eqref{e:one.step.tree} is asymptotically equal to 
$$
\Big(\frac{2\beta}{\gamma(p-1)}\Big)^d u^{-p\gamma} (\log u^{-1})^t.   
$$
From the above calculations, we observe that when multiple paths begin at each leaf and merge together, a logarithmic factor no longer contributes until the merged path reaches the root. Specifically, the exponent of $\log u^{-1}$ is solely determined by $t=\sum_{i=1}^p s_i$ and remains unaffected by $d$. The proof can be finalized by repeating these calculations whenever multiple paths begin at each leaf and subsequently merge.
\end{proof}

\subsection{Proof of Proposition \ref{p:var.asym.specific}}
Given a directed tree $\ms{T}$ defined on $a+1$ nodes with root $\ms{r}$ as considered prior to  Proposition \ref{p:var.asym.specific}, we first express $\inDt(u)$ as follows. 
\begin{equation}  \label{e:expression.Din}
\inDt(u) = \sum_{(P_1,\dots,P_a)\in \mP_{\neq}^a} h\big( (0,u), P_1,\dots, P_a \big), 
\end{equation}
where $h$ is an appropriately defined indicator  for the existence of an injective graph homomorphism from $\ms{T}$ to an induced graph on vertices $\big\{ (0,u), P_1,\dots,P_a \big\}$ such that $\ms{r}$ is mapped to $(0,u)$. Moreover, we define 
$$
\mP_{\neq}^a := \big\{ (P_1,\dots, P_a) \in \mP^a: P_i \neq P_j \text{ for } i \neq j \big\}. 
$$

By the Mecke formula for Poisson point processes, 
\begin{align}
\begin{split}  \label{e:var.mecke}
\text{Var}\big( \inDt(u)\big) &=\E \bigg[ \sum_{(P_1, \dots, P_a)\in \mP_{\neq}^a}\sum_{(Q_1,\dots,Q_a)\in \mP_{\neq}^a} \hspace{-10pt}h\big( (0,u),P_1,\dots,P_a \big)\, \\
&\qquad \qquad\qquad\qquad\qquad\qquad\qquad\qquad\times h\big( (0,u),Q_1,\dots,Q_a \big)  \bigg] - \mu(u)^2 \\
&=\mu(u) + \E \bigg[ \sum_{(P_1, \dots, P_a)\in \mP_{\neq}^a}\sum_{\substack{(Q_1,\dots,Q_a)\in \mP_{\neq}^a, \\ 1\le |(P_1,\dots,P_a)\cap (Q_1,\dots,Q_a)|\le a-1}} \hspace{-10pt}h\big( (0,u),P_1,\dots,P_a \big)\, \\
&\qquad \qquad\qquad\qquad\qquad\qquad\qquad\qquad\times h\big( (0,u),Q_1,\dots,Q_a \big)  \bigg]. 
\end{split}
\end{align}
The asymptotics of $\mu(u)$ is  given in \eqref{e:exp.asym}, so we here work with the second term above. We note that since $\inDt(u)$ is a U-statistic, the expansion could also be deduced from \cite[Lemma 3.5]{rs13}.

Here, we consider the case where $|(P_1,\dots,P_a)\cap (Q_1, \dots, Q_a)|=1$.  If more than one point is common between $(P_1,\dots,P_a)$ and $(Q_1,\dots,Q_a)$, the case requiring special attention arises when 
the product of the two indicator functions in \eqref{e:var.mecke}
 implies the presence of cycle structures. Consider, for instance, the following configuration of nodes and edges: 
\begin{align*}
&R_1 \to R_2 \to \cdots \to R_{k-1} \to R_k, \ \ \ R_1 \to R_2' \to \cdots \to R_{\ell-1}' \to R_k,
\end{align*}
where $R_i$ and $R_i'$ are distinct points in $\mP$. In such a case, it is sufficient to ``break" the cycle by removing one of the edges formed by the pair of nodes with the highest marks, i.e., either $R_1\to R_2$ or $R_1\to R_2'$. For each occurrence of such a cycle, we systematically remove edges using this approach until at most one cycle remains. Then, we apply the argument outlined below for the case where $|(P_1,\dots,P_a)\cap (Q_1, \dots, Q_a)|=1$.

Now, assuming $\big|(P_1,\dots,P_a)\cap (Q_1,\dots,Q_a)\big|=1$, we begin our discussion with the case where $m \ge 1$. By selecting a point from each of the trees $\ms{T}_1$ and $\ms{T}_2$ such that $\ms{T}_i \cong \ms{T}$ for $i=1,2$ (i.e., $\ms{T}_i$ is graph isomorphic to $\ms{T}$), and identifying these selected points together, we can observe  the following structure:
\begin{align}  
\begin{split}  \label{e:cycle.specific.tree}
&(y_1,v_1) \to \cdots \to (y_s,v_s) \to (z_0,w_0), \\
&(y_1',v_1') \to \cdots \to (y_t',v_t') \to (z_0,w_0), \\
&(z_0,w_0) \to (z_1,w_1) \to \cdots \to (z_d,w_d) \to (0,u), \\
&(z_0,w_0) \to (z_1',w_1') \to \cdots \to (z_e',w_e') \to (0,u),
\end{split}
\end{align}
for some $s, t, d, e \ge 0$. Here, $(z_0,w_0)$ corresponds to a common node between $\ms{T}_1$ and $\ms{T}_2$. In the special case where $s=t=0$, the out-degree of $(z_0,w_0)$ is $0$. Moreover, if $d=0$ (resp.~$e=0$), the third (resp.~fourth) line in \eqref{e:cycle.specific.tree} simplifies to $(z_0,w_0) \to (0,u)$. In particular, if $d=e=0$, a  cycle structure does not occur. Indeed, \eqref{e:cycle.specific.tree} represents a cycle  only when either $d$ or $e$ is positive. The latter  is, in fact, a more challenging case, which we will discuss in detail below. For this discussion, we assume without loss of generality that $e\ge1$ (since $m\ge1$, it is possible to put this assumption). 
In the following, we shall consider three distinct situations based on the choices of $s$ and $t$, in addition to the aforementioned assumption $e\ge1$.
\vspace{3pt}

\noindent $(i)$ $s \wedge t \ge1$. \\
$(ii)$ $s\wedge t=0$ and $s\vee t\ge1$. \\
$(iii)$ $s =t=0$.
\vspace{3pt}

\noindent {\bfseries Case $(i)$:} $s \wedge t \ge1$. \\
Before identifying two  points in $\ms{T}_1$ and $\ms{T}_2$, the contribution to the variance from the  structure in \eqref{e:cycle.specific.tree} is asymptotically given by 
\begin{equation}  \label{e:initial.contribution.case(i)}
u^{-2\gamma} (\log u^{-1})^{s+t+d+e}. 
\end{equation}
After identifying two chosen vertices in $\ms{T}_1$ and $\ms{T}_2$, the contribution to the variance from the configuration in \eqref{e:cycle.specific.tree} can be updated as follows. 
\begin{align*}
A_u &:= \int_{w_0=u}^1 \int_{w_d=u}^{w_0} \int_{w_{d-1}=w_d}^{w_0} \cdots \int_{w_{1}=w_2}^{w_0} \int_{w_e'=u}^{w_0} \int_{w_{e-1}'=w_e'}^{w_0} \cdots \int_{w_1'=w_2'}^{w_0} w_0^{-2\gamma} (\log w_0^{-1})^{s+t-2} \\
&\quad \times \int_{z_0\in \R} \prod_{i=1}^d \int_{z_i\in \R} \prod_{j=1}^e \int_{z_j'\in \R} \one \big\{ (z_0,w_0) \to (z_1,w_1) \to \cdots \to (z_d,w_d) \to (0,u) \big\} \\
&\qquad \qquad\qquad\qquad\qquad\qquad\qquad\times \one \big\{ (z_0,w_0) \to (z_1',w_1') \to \cdots \to (z_e',w_e') \to (0,u) \big\}. 
\end{align*}
Here, $w_0^{-2\gamma} (\log w_0^{-1})^{s+t-2}$ represents the contribution from the structure in the first and second lines of \eqref{e:cycle.specific.tree} (see Lemma \ref{l:sequential.integral}). 
By ignoring the edge $(z_0,w_0)\to (z_1',w_1')$ and integrating indicators over all spatial coordinates, 
\begin{align*}
&\int_{z_0\in \R} \prod_{i=1}^d \int_{z_i\in \R} \prod_{j=1}^e \int_{z_j'\in \R} \one \big\{ (z_0,w_0) \to (z_1,w_1) \to \cdots \to (z_d,w_d) \to (0,u) \big\} \\
&\qquad \qquad\qquad\qquad\qquad\qquad\qquad\times \one \big\{ (z_0,w_0) \to (z_1',w_1') \to \cdots \to (z_e',w_e') \to (0,u) \big\} \\
&\le C u^{-2\gamma} w_0^{\gamma-1} \Big( \prod_{i=1}^d w_i^{-1} \Big) (w_1')^{\gamma-1} \Big\{ \prod_{j=2}^e (w_j')^{-1} \Big\}. 
\end{align*}
Because  of this bound, 
\begin{align*}
A_u &\le Cu^{-2\gamma} \int_{w_0=u}^1 w_0^{-1-\gamma} (\log w_0^{-1})^{s+t-2} \int_{w_d=u}^{w_0} w_d^{-1} \int_{w_{d-1}=w_d}^{w_0} w_{d-1}^{-1} \times \cdots \times  \int_{w_{1}=w_2}^{w_0} w_1^{-1} \\
&\quad \times  \int_{w_e'=u}^{w_0} (w_e')^{-1} \int_{w_{e-1}'=w_e'}^{w_0} (w_{e-1}')^{-1} \times \cdots \times \int_{w_2'=w_3'}^{w_0} (w_2')^{-1} \int_{w_1'=w_2'}^{w_0} (w_1')^{\gamma-1}
\end{align*}
(if $e=1$, the second line simplifies to $\int_{w_1'=u}^{w_0} (w_1')^{\gamma-1}$). 
Since $\int_{w_1'=w_2'}^{w_0} (w_1')^{\gamma-1} \le Cw_0^\gamma$, and 
\begin{align*}
&\int_{w_d=u}^{w_0} w_d^{-1} \int_{w_{d-1}=w_d}^{w_0} w_{d-1}^{-1} \times \cdots \times  \int_{w_{1}=w_2}^{w_0} w_1^{-1} \le C (\log u^{-1})^d, \\
&\int_{w_e'=u}^{w_0} (w_e')^{-1} \int_{w_{e-1}'=w_e'}^{w_0} (w_{e-1}')^{-1} \times \cdots \times \int_{w_2'=w_3'}^{w_0} (w_2')^{-1} \le C (\log u^{-1})^{e-1}, 
\end{align*}
we have 
\begin{equation}  \label{e:later.contribution.case(i)}
A_u \le Cu^{-2\gamma} (\log u^{-1})^{d+e-1} \int_{w_0=u}^1 w_0^{-1} (\log w_0^{-1})^{s+t-2} \le C u^{-2\gamma} (\log u^{-1})^{s+t+d+e-2}.  
\end{equation}
Thus, by \eqref{e:initial.contribution.case(i)} and \eqref{e:later.contribution.case(i)}, the corresponding terms in Var$\big( \inDt(u) \big)$ can be upper bounded by 
$$
Cu^{-2\ell\gamma} (\log u^{-1})^{2m} \frac{u^{-2\gamma} (\log u^{-1})^{s+t+d+e-2}}{u^{-2\gamma} (\log u^{-1})^{s+t+d+e}} = Cu^{-2\ell\gamma} (\log u^{-1})^{2m-2}\le Cu^{-2\ell\gamma} (\log u^{-1})^{2m-1}. 
$$

\noindent {\bfseries Case $(iii)$:} $s=t=0$. \\
Notice that the initial contribution to the variance from the structure in  \eqref{e:cycle.specific.tree} is asymptotically given by 
$$
u^{-2\gamma} (\log u^{-1})^{d+e}. 
$$
Instead of $A_u$ in Case $(i)$, we here need to estimate 
\begin{align*}
B_u &:= \int_{w_0=u}^1 \int_{w_d=u}^{w_0} \int_{w_{d-1}=w_d}^{w_0} \cdots \int_{w_{1}=w_2}^{w_0} \int_{w_e'=u}^{w_0} \int_{w_{e-1}'=w_e'}^{w_0} \cdots \int_{w_1'=w_2'}^{w_0} \\
&\quad \times \int_{z_0\in \R} \prod_{i=1}^d \int_{z_i\in \R} \prod_{j=1}^e \int_{z_j'\in \R} \one \big\{ (z_0,w_0) \to (z_1,w_1) \to \cdots \to (z_d,w_d) \to (0,u) \big\} \\
&\qquad \qquad\qquad\qquad\qquad\qquad\qquad\times \one \big\{ (z_0,w_0) \to (z_1',w_1') \to \cdots \to (z_e',w_e') \to (0,u) \big\} \\
&\le Cu^{-2\gamma} \int_{w_0=u}^1 w_0^{\gamma-1} \int_{w_d=u}^{w_0} w_d^{-1} \int_{w_{d-1}=w_d}^{w_0} w_{d-1}^{-1} \times \cdots \times  \int_{w_{1}=w_2}^{w_0} w_1^{-1} \\
&\quad \times  \int_{w_e'=u}^{w_0} (w_e')^{-1} \int_{w_{e-1}'=w_e'}^{w_0} (w_{e-1}')^{-1} \times \cdots \times \int_{w_2'=w_3'}^{w_0} (w_2')^{-1} \int_{w_1'=w_2'}^{w_0} (w_1')^{\gamma-1} \\
&\le Cu^{-2\gamma} (\log u^{-1})^{d+e-1}. 
\end{align*}
For the first inequality, we have again removed the condition $(z_0,w_0)\to (z_1',w_1')$ and computed integrals over all spatial coordinates. Then, the corresponding terms in  Var$\big( \inDt(u) \big)$ is at most 
$$
Cu^{-2\ell\gamma} (\log u^{-1})^{2m} \frac{u^{-2\gamma} (\log u^{-1})^{d+e-1}}{u^{-2\gamma} (\log u^{-1})^{d+e}} = Cu^{-2\ell\gamma} (\log u^{-1})^{2m-1}. 
$$

By repeating similar calculations, although the details are omitted here, we can conclude that in Case $(ii)$,  the corresponding terms in Var$\big( \inDt(u) \big)$  can be  bounded by  $Cu^{-2\ell\gamma} (\log u^{-1})^{2m-1}$. 
\medskip

It now remains to deal with the case  $m=0$. It then suffices to consider a configuration 
$$
(z_i,w_i) \to (0,u), \ \ i=1,\dots,2\ell-1. 
$$
In this case, our estimation for the variance is given by 
\begin{align*}
&\prod_{i=1}^{2\ell-1} \int_{w_i=u}^1 \prod_{j=1}^{2\ell-1} \int_{z_j\in \R} \one \big\{  (z_i,w_i)\to (0,u), \ i=1,\dots,2\ell-1\big\} \\
&= C u^{-(2\ell-1)\gamma} \Big( \int_u^1 w^{\gamma-1}\dif w \Big)^{2\ell-1} \le C u^{-(2\ell-1)\gamma}, 
\end{align*}
as desired. 

\subsection{Proof of Proposition \ref{p:exp.var.asym.clique}}

\noindent \underline{\textit{Proof of $(i)$}}: It follows from the Mecke formula that 
\begin{align}
\begin{split}  \label{e:nu.start}
\nu(u) &:= \int_{v=u}^1 \prod_{i=1}^{m-2} \int_{w_i=u}^v \int_{y\in \R} \prod_{j=1}^{m-2} \int_{z_j\in \R} \one \big\{ (y,v) \to (0,u) \big\} \\
&\qquad \qquad \times \one \big\{ (y,v)\to (z_\ell, w_\ell) \to (0,u), \, \ell=1,\dots,m-2 \big\} \\
&\qquad \qquad \times \one \big\{  \{ (z_1,w_1), \dots, (z_{m-2}, w_{m-2}) \} \text{ forms an } (m-2)\text{-clique} \big\}. 
\end{split}
\end{align}
We begin by approximating $\nu(u)$ as follows: 
\begin{align}
\begin{split}  \label{e:approx.nu}
\nu(u) &= \int_{v=u}^1 \prod_{i=1}^{m-2} \int_{w_i=u}^v \int_{y\in \R} \prod_{j=1}^{m-2} \int_{z_j\in \R} \one \big\{ (y,v) \to (0,u), \, (y,v)\to (z_\ell, w_\ell), \, \ell=1,\dots,m-2 \big\} \\
&\qquad \qquad \times \one \big\{  \{ (z_1,w_1), \dots, (z_{m-2}, w_{m-2}) \} \text{ forms an } (m-2)\text{-clique} \big\} + o(u^{-\gamma}), \ \ \ u\downarrow0, 
\end{split}
\end{align}
which indicates that  conditions $(z_\ell, w_\ell)\to (0,u)$, $\ell=1,\dots,m-2$, can be asymptotically removed from \eqref{e:nu.start}. For this purpose, we claim that for every $k=1,\dots,m-2$, 
\begin{align*}
A_u^{(k)} &:= \int_{v=u}^1 \prod_{i=1}^{m-2} \int_{w_i=u}^v \int_{y\in \R} \prod_{j=1}^{m-2} \int_{z_j\in \R} \one \big\{ (y,v) \to (0,u), \, (y,v)\to (z_\ell, w_\ell), \, \ell=1,\dots,m-2 \big\} \\ 
&\qquad \qquad \times \one \big\{ (z_k, w_k) \nrightarrow (0,u) \big\} = o(u^{-\gamma}), \ \ \text{as } u\downarrow0. 
\end{align*}
Below, we prove, without loss of generality, that $A_u^{(1)} = o(u^{-\gamma})$ as $u\downarrow0$. Notice that $(z_1, w_1) \nrightarrow (0,u)$ implies $|z_1|> \beta u^{-\gamma} w_1^{\gamma-1} \ge \beta u^{-\gamma} v^{\gamma-1}$. 
By the triangle inequality,  either $|y| \ge (1-\vep_u)\beta u^{-\gamma}v^{\gamma-1}$ or $|z_1-y|\ge \vep_u \beta u^{-\gamma}v^{\gamma-1}$ follows, where $\vep_u := (\log u^{-1})^{-1}$. 
\vspace{5pt}

\noindent {\bfseries Case $(i)$:} $|y|\ge (1-\vep_u) \beta u^{-\gamma}v^{\gamma-1}$. \\
Replacing the condition $(z_1, w_1) \nrightarrow (0,u)$ in $A_u^{(1)}$ with $|y|\ge (1-\vep_u) \beta u^{-\gamma}v^{\gamma-1}$, it is enough to demonstrate that 
\begin{align*}
B_u &:= \int_{w_{m-2}=u}^1 \dots \int_{w_1=u}^1 \int_{v=w_1\vee \cdots \vee w_{m-2}}^1 \int_{y\in \R} \prod_{j=1}^{m-2}\int_{z_j\in \R} \one \big\{    (1-\vep_u) \beta u^{-\gamma}v^{\gamma-1} \le |y| \le \beta u^{-\gamma}v^{\gamma-1} \big\} \\
&\qquad \qquad \qquad \times \one \big\{  |z_\ell-y| \le \beta w_\ell^{-\gamma} v^{\gamma-1} , \, \ell=1,\dots,m-2\big\} = o(u^{-\gamma}), \ \ \ u\downarrow0. 
\end{align*}
By integrating above  indicators with respect to spatial coordinates, 
\begin{align*}
B_u &= (2\beta)^{m-1}\vep_u u^{-\gamma} \int_{w_{m-2}=u}^1 w_{m-2}^{-\gamma}\times \dots\times  \int_{w_1=u}^1 w_1^{-\gamma} \int_{v=w_1\vee \cdots  \vee w_{m-2}}^1 v^{-(m-1)(1-\gamma)} \\
&= (2\beta)^{m-1} (m-2)! \vep_u u^{-\gamma} \int_{w_{m-2}=u}^1 w_{m-2}^{-\gamma} \int_{w_{m-3}=w_{m-2}}^1 w_{m-3}^{-\gamma} \times \\
&\qquad \qquad \qquad \qquad \qquad \cdots \times \int_{w_1=w_2}^1 w_1^{-\gamma}\int_{v=w_1}^1 v^{-(m-1)(1-\gamma)}. 
\end{align*}
Since $\vep_u u^{-\gamma} = o(u^{-\gamma})$ as $u \downarrow 0$, it suffices to verify that the multiple integral above is at most of constant order. For this purpose, we only need to consider the case where $\gamma$ is sufficiently close to $1/2$. Even in such cases, an elementary calculation shows that $B_u \le C \vep_u u^{-\gamma} = o(u^{-\gamma})$ as $u \downarrow 0$.
\vspace{3pt}

\noindent {\bfseries Case $(ii)$:} $|z_1-y|\ge \vep_u \beta u^{-\gamma}v^{\gamma-1}$. \\
Replacing the condition $(z_1, w_1) \nrightarrow (0,u)$ in $A_u^{(1)}$ with $|z_1-y|\ge \vep_u \beta u^{-\gamma}v^{\gamma-1}$, it is here sufficient to show that 
\begin{align*}
D_u &:= \int_{v=u}^1 \prod_{i=1}^{m-2} \int_{w_i=u}^v \int_{y\in \R} \prod_{j=1}^{m-2} \int_{z_j\in \R} \one \big\{ |y|\le \beta u^{-\gamma}v^{\gamma-1}, \, \vep_u \beta u^{-\gamma} v^{\gamma-1} \le |z_1-y| \le \beta w_1^{-\gamma} v^{\gamma-1}  \big\}  \\
&\qquad \qquad  \times \one \big\{ |z_\ell-y|\le \beta w_\ell^{-\gamma} v^{\gamma-1}, \, \ell=2,\dots,m-2 \big\}\times \one \{ \beta w_1^{-\gamma} v^{\gamma-1} \ge \vep_u \beta u^{-\gamma} v^{\gamma-1} \} = o(u^{-\gamma}). 
\end{align*}
To prove this claim, note first that 
$$
\one \{ \beta w_1^{-\gamma} v^{\gamma-1} \ge \vep_u \beta u^{-\gamma} v^{\gamma-1} \} = \one \{ w_1\le \vep_u^{-1/\gamma}u \}. 
$$
We shall consider the case that $w_1$ is the $j$th largest variable among $w_1,\dots,w_{m-2}$ for some $j\in \{1,\dots,m-2\}$. Assuming without loss of generality that $w_{m-2} \ge \cdots \ge w_{m-j}\ge w_1 \ge w_{m-j-1}\ge \cdots \ge  w_2$, it is enough to verify that 
\begin{align*}
E_u &:= \int_{v=u}^1 \int_{w_{m-2}=u}^v  \cdots  \int_{w_{m-j}=u}^v  \int_{w_{m-j-1}=u}^{\vep_u^{-1/\gamma}u}   \cdots  \int_{w_{1}=u}^{\vep_u^{-1/\gamma}u} \int_{y\in \R} \prod_{j=1}^{m-2}\int_{z_j\in \R} \\
&\qquad \one \big\{ |y|\le \beta u^{-\gamma}v^{\gamma-1}, \,|z_\ell-y|\le \beta w_\ell^{-\gamma} v^{\gamma-1}, \, \ell=1,\dots,m-2 \big\}  = o(u^{-\gamma}). 
\end{align*}
Integrating over all spatial coordinates, it turns out that 
\begin{align*}
E_u &= C u^{-\gamma} \int_{v=u}^1 v^{-(m-1)(1-\gamma)} \Big( \int_{w=u}^v w^{-\gamma} \Big)^{j-1} \Big( \int_{w=u}^{\vep_u^{-1/\gamma}u} w^{-\gamma} \Big)^{m-j-1} \\
&\le C \vep_u^{-\frac{1-\gamma}{\gamma} (m-j-1)} u^{-\gamma + (m-j-1)(1-\gamma)} \int_{v=u}^1 v^{-(m-j)(1-\gamma)}. 
\end{align*}
Evidently, we have 
$$
\int_{v=u}^1 v^{-(m-j)(1-\gamma)} \le \begin{cases}
C \log u^{-1} & \text{ if } \frac{m-j-1}{m-j} \le \gamma <1, \\
C u^{1-(m-j)(1-\gamma)} & \text{ if } \frac{1}{2} <\gamma <\frac{m-j-1}{m-j}. 
\end{cases}
$$
Therefore, if $\frac{m-j-1}{m-j} \le \gamma <1$, then 
$$
E_u \le C \vep_u^{-\frac{1-\gamma}{\gamma} (m-j-1)} u^{-\gamma + (m-j-1)(1-\gamma)} \log u^{-1} = o(u^{-\gamma}), \ \ \ u\downarrow0, 
$$
and if $ \frac{1}{2} <\gamma <\frac{m-j-1}{m-j}$, then 
$$
E_u \le C \vep_u^{-\frac{1-\gamma}{\gamma} (m-j-1)}= o(u^{-\gamma}), \ \ \ u\downarrow0. 
$$
Thus, \eqref{e:approx.nu} has been established. Next, it turns out that the leading term in \eqref{e:approx.nu} is equal to 
\begin{equation}  \label{e:after.approx}
\int_{v=u}^1 \int_{y\in \R} \one \big\{ |y|\le \beta u^{-\gamma}v^{\gamma-1} \big\} I(y,v), 
\end{equation}
where 
\begin{align*}
I(y,v) &:= \prod_{i=1}^{m-2} \int_{w_i=u}^v \prod_{j=1}^{m-2}\int_{z_j\in \R} \one \big\{ |z_\ell-y|\le \beta w_\ell^{-\gamma} v^{\gamma-1}, \, \ell=1,\dots,m-2 \big\} \\
&\qquad \qquad \qquad \times \one \big\{   |z_k-z_\ell| \le \beta (w_k\wedge w_\ell)^{-\gamma} (w_k \vee w_\ell)^{\gamma-1}, \, k, \ell \in \{ 1,\dots,m-2 \} \big\}. 
\end{align*}
Performing the change of variables by $z_{\ell}' = z_\ell-y$, $\ell=1,\dots,m-2$, which is  followed by additional change of variables $w_\ell=v\alpha_\ell$ and $z_\ell' = v^{-1}x_\ell$ for $\ell=1,\dots,m-2$, 
one can readily see that $I(y,v) = \frac{\gamma}{2\beta} C_{\beta,\gamma}$  regardless of the values of $y$ and $v$. Now, we can conclude that \eqref{e:after.approx}  equals $C_{\beta,\gamma}u^{-\gamma}$ as $u\downarrow0$. 
\medskip

\noindent \underline{\textit{Proof of $(ii)$}}: For ease of description, we denote $\inCc(u)$ as 
$$
\inCc(u) = \sum_{(P_1,\dots,P_{m-1})\in \mP_{\neq}^{m-1}} g\big( (0,u), P_1,\dots,P_{m-1} \big), 
$$
where 
\begin{align}  
\begin{split}  \label{e:def.g}
&g\big( (0,u), P_1,\dots,P_{m-1} \big) :=\one \big\{ (P_1,\dots,P_{m-1}) \text{ forms an } (m-1)\text{-clique}, \\ 
&\qquad \qquad\qquad\qquad \qquad\qquad\qquad \qquad P_i \to (0,u), \, i=1,\dots, m-1 \big\}. 
\end{split}
\end{align}
It follows from this expression, along with the Mecke formula for Poisson point processes, that 
\begin{align*}
\text{Var}\big( \inCc(u) \big) &= \nu(u) + \E \bigg[ \sum_{\substack{(P_1,\dots,P_{m-1})\in \mP_{\neq}^{m-1}}}\hspace{-10pt} \sum_{\substack{(Q_1,\dots,Q_{m-1})\in \mP_{\neq}^{m-1}, \\ |(P_1,\dots,P_{m-1})\cap (Q_1,\dots,Q_{m-1})|\le m-2}} \\
&\qquad \qquad \qquad \qquad \qquad  g\big( (0,u), P_1,\dots, P_{m-1} \big)\, g\big( (0,u), Q_1,\dots,Q_{m-1} \big)  \bigg] - \big( \nu(u) \big)^2  \\
&= \nu(u) + \sum_{q=1}^{m-2} \E \bigg[ \sum_{\substack{(P_1,\dots,P_{m-1})\in \mP_{\neq}^{m-1}}}\hspace{-10pt} \sum_{\substack{(Q_1,\dots,Q_{m-1})\in \mP_{\neq}^{m-1}, \\ |(P_1,\dots,P_{m-1})\cap (Q_1,\dots,Q_{m-1})|=q}} \\
&\qquad \qquad \qquad \qquad \qquad g\big( (0,u), P_1,\dots, P_{m-1} \big)\, g\big( (0,u), Q_1,\dots,Q_{m-1} \big)  \bigg] \\
&=: \nu(u) + \sum_{q=1}^{m-2} I_q (u). 
\end{align*}
Here, $q = \big|(P_1,\dots,P_{m-1}) \cap (Q_1,\dots,Q_{m-1})\big| \in \{ 1, \dots, m-2 \}$ represents the number of common nodes between $(P_1,\dots,P_{m-1})$ and $(Q_1,\dots,Q_{m-1})$. We define the following configuration of nodes. Let $(z,w)$ be the node of the highest mark among all $q$ common points, and let $(y_1, v_1), \dots, (y_{m-2}, v_{m-2})$ with $v_1 \ge \dots \ge v_{m-2}$ denote a set of points that, along with $(0,u)$ and $(z,w)$, form one of the $m$-cliques. Additionally, let $(z_1, w_1), \dots, (z_{m-q-1}, w_{m-q-1})$ with $w_1 \ge \dots \ge w_{m-q-1}$ represent the remaining nodes in the other $m$-clique, excluding $(0,u)$, $(z,w)$, and the other $q-1$ common points between the two cliques. Our discussion proceeds by dividing into two cases: $(i)$ $v_1 \wedge w_1 > w$ and $(ii)$ $v_1 \wedge w_1 \le w \le v_1 \vee w_1$ or $w \ge v_1 \vee w_1$.
\medskip

\noindent {\bfseries Case $(i)$:} $v_1 \wedge w_1 > w$. \\
As shown in Figure \ref{fig:clique.configuration.var}, we consider the following configuration of nodes and edge connections. 
\begin{align}
\begin{split}  \label{e:conf.v1.w1.larger.than.w}
&(y_1,v_1) \to \cdots \to (y_{m-2},v_{m-2}) \to (0,u), \\
&(z_1,w_1)\to \cdots \to (z_{m-q-1}, w_{m-q-1}), \\
&(y_1,v_1) \to (z,w), \ (z_1,w_1) \to (z,w). 
\end{split}
\end{align}
\begin{figure}
\includegraphics[scale=0.4]{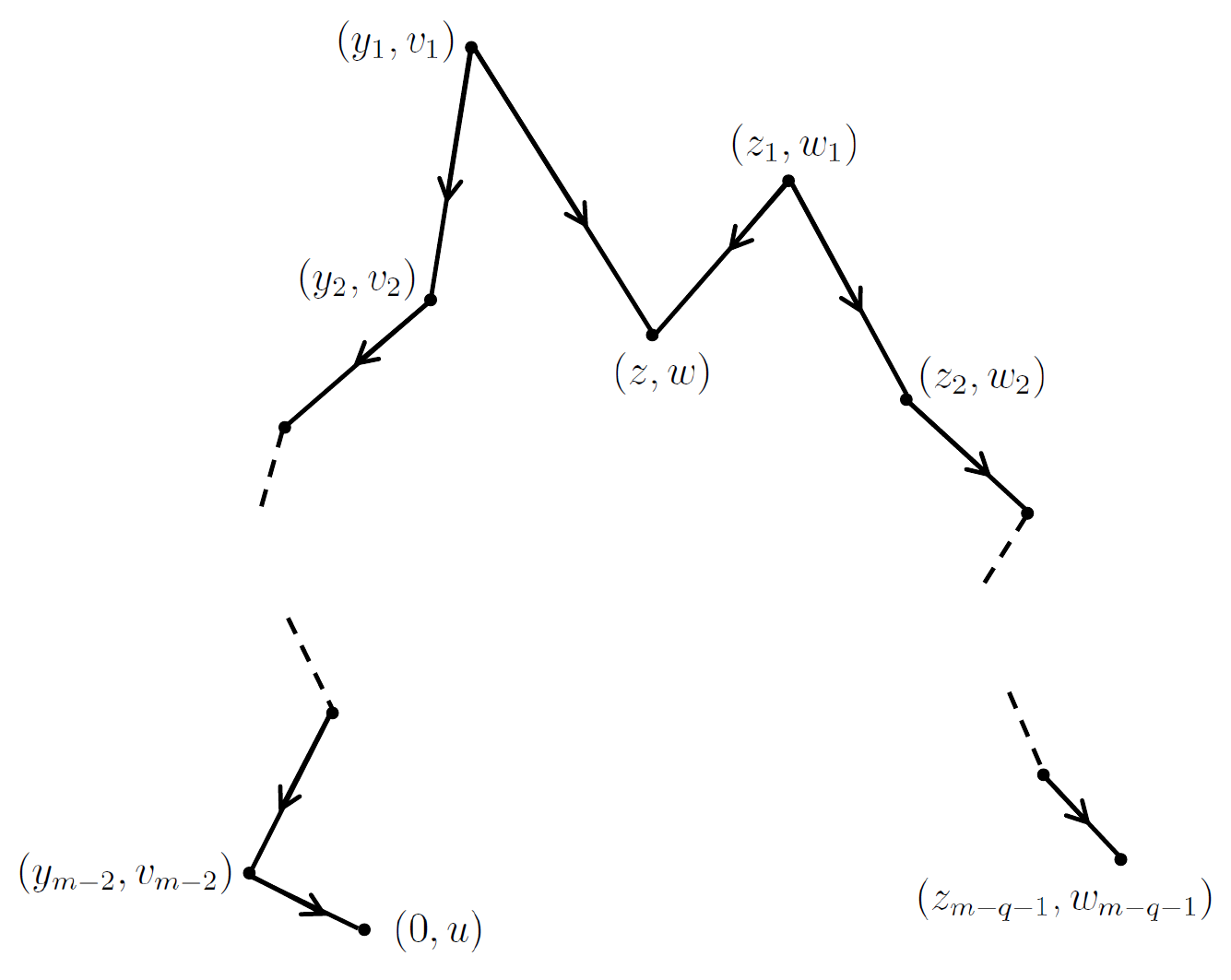}
\caption{\label{fig:clique.configuration.var} \footnotesize{Configuration of nodes and edges for the proof of Case $(i)$.}}
\end{figure}
We first deal with the case $m\ge4$. Then, the corresponding terms in $I_q(u)$ are upper bounded by 
\begin{align*}
A_u &:= \int_{v_{m-2}=u}^1 \int_{v_{m-3}=v_{m-2}}^1 \cdots \int_{v_2=v_3}^1  \int_{w_{m-q-1}=u}^1 \int_{w_{m-q-2}=w_{m-q-1}}^1 \cdots \int_{w_2=w_3}^1  \\
&\quad \times \int_{w=u}^1 \int_{v_1=w}^1 \int_{w_1=w}^1 \prod_{i=1}^{m-2} \int_{y_i \in \R} \int_{z\in \R} \prod_{j=1}^{m-2}\int_{z_j\in \R} \one \big\{ (y_1,v_1) \to \cdots \to (y_{m-2},v_{m-2}) \to (0,u) \big\}  \\
&\qquad \qquad \times \one \big\{ (z_1,w_1)\to \cdots \to (z_{m-q-1}, w_{m-q-1}),\, (y_1,v_1) \to (z,w), \, (z_1,w_1) \to (z,w) \big\}. 
\end{align*}
By integrating it over all spatial coordinates, 
\begin{align*}
A_u &= Cu^{-\gamma} \int_{v_{m-2}=u}^1 v_{m-2}^{-1} \int_{v_{m-3}=v_{m-2}}^1 v_{m-3}^{-1}\times \cdots \times  \int_{v_2=v_3}^1 v_2^{-1} \\
&\quad \times \int_{w_{m-q-1}=u}^1 w_{m-q-1}^{-\gamma}\int_{w_{m-q-2}=w_{m-q-1}}^1 w_{m-q-2}^{-1} \times \cdots\times  \int_{w_2=w_3}^1 w_2^{-1} \\
&\quad \times \int_{w=u}^1 w^{-2\gamma} \int_{v_1=w}^1 v_1^{2(\gamma-1)} \int_{w_1=w}^1 w_1^{2(\gamma-1)}  \\
&\le Cu^{1-3\gamma}  \int_{v_{m-2}=u}^1 v_{m-2}^{-1} \int_{v_{m-3}=v_{m-2}}^1 v_{m-3}^{-1}\times \cdots \times  \int_{v_2=v_3}^1 v_2^{-1} \\
&\quad \times \int_{w_{m-q-1}=u}^1 w_{m-q-1}^{-\gamma}\int_{w_{m-q-2}=w_{m-q-1}}^1 w_{m-q-2}^{-1} \times \cdots\times  \int_{w_2=w_3}^1 w_2^{-1}\\
&\le Cu^{1-3\gamma}  \int_{v_{m-2}=u}^1 v_{m-2}^{-1} \int_{v_{m-3}=v_{m-2}}^1 v_{m-3}^{-1}\times \cdots \times  \int_{v_2=v_3}^1 v_2^{-1}. 
\end{align*}
By virtue of  the identity $\int x^{-1}(\log x)^\ell \dif x = (\log x )^{\ell+1}/(\ell+1)$ for $\ell=0,1,\dots$, 
$$
\int_{v_{m-2}=u}^1 v_{m-2}^{-1} \int_{v_{m-3}=v_{m-2}}^1 v_{m-3}^{-1}\times \cdots \times  \int_{v_2=v_3}^1 v_2^{-1} = C (\log u^{-1})^{m-3}. 
$$
It thus follows that $A_u \le Cu^{1-3\gamma} (\log u^{-1})^{m-3}$. 

Subsequently, we consider the case $m=3$; then $q$ must be $1$ because of the restriction $v_1 \wedge w_1 >v$, and 
\begin{align*}
A_u &= \int_{w=u}^1 \int_{v_1=w}^1 \int_{w_1=w}^1 \int_{y_1\in \R}\int_{z\in \R} \int_{z_1\in \R} \one \big\{ (y_1,v_1)\to (0,u), \, (y_1,v_1)\to (z,w), \, (z_1,w_1)\to (z,w) \big\} \\
&= (2\beta)^3u^{-\gamma} \int_{w=u}^1 w^{-2\gamma} \int_{v_1=w}^1 v_1^{2(\gamma-1)} \int_{w_1=w}^1  w_1^{\gamma-1} \le Cu^{1-3\gamma}. 
\end{align*}
\vspace{5pt}

\noindent {\bfseries Case $(ii)$:} $v_1 \wedge w_1 \le w \le v_1 \vee w_1$ or $w\ge v_1 \vee w_1$. \\
Under a given configuration of nodes, one can select a point $P \in \mP$ whose mark is higher than $u$, such that there are still at least  $2m-q-3$ points below $P$. Consequently, the  corresponding terms in $I_q(u)$ are  upper bounded by 
\begin{equation}  \label{e:clique.counts.bdd}
 \E \Big[ \sum_{P\in \mP \cap N^\uparrow (0,u)} \mP \big( N^\downarrow (P) \big)^{2m-q-3}\Big], 
\end{equation}
where 
\begin{align*}
N^\uparrow (0,u) &:= \big\{ (z,w)\in \bbT: (z,w)\to (0,u) \big\}, \\
N^\downarrow(P) &:= \big\{ (z,w)\in \bbT: P\to (z,w) \big\}. 
\end{align*}

By the Mecke formula, 
\begin{align*}
&\E \Big[ \sum_{P\in \mP \cap N^\uparrow (0,u)} \mP \big( N^\downarrow (P) \big)^{2m-q-3}\Big] = \int_0^1 \int_{\R} \one \big\{ (y,v)\to (0,u) \big\} \E \Big[ \mP\big(  N^\downarrow (y,v)\big)^{2m-q-3} \Big] \dif y \dif  v. 
\end{align*}
Here, it is easy to see that $\mP \big( N^\downarrow (y,v) \big)$ has a Poisson distribution with mean $2\beta/(1-\gamma)$, which in turn implies that the law of $\mP \big( N^\downarrow (y,v) \big)$  does not depend on the choice of $(y,v)$. Therefore, 
$$
\E \Big[ \sum_{P\in \mP \cap N^\uparrow (0,u)} \mP \big( N^\downarrow (P) \big)^{2m-q-3}\Big] \le C \int_0^1 \int_{\R} \one \big\{ (y,v)\to (0,u) \big\} \dif y \dif v \le Cu^{-\gamma}. 
$$ 

Combining Cases $(i)$ and $(ii)$ together, for every $q=1,\dots,m-2$, 
$$
I_q(u) \le C \Big( u^{1-3\gamma}(\log u^{-1})^{m-3} + u^{-\gamma} \Big) \le Cu^{1-3\gamma} (\log u^{-1})^{m-3}, \ \ \ u\in (0,1), 
$$
and hence, 
$$
\text{Var}\big( \inCc(u) \big) \le \nu(u) + \sum_{q=1}^{m-2}I_q(u) \le Cu^{1-3\gamma} (\log u^{-1})^{m-3}, \ \ \ u\in (0,1). 
$$

\bibliography{ADRCM}

\end{document}